\documentclass[12pt,letterpaper]{article}
\usepackage{geometry}
\usepackage{enumitem}
\usepackage[utf8]{inputenc}
\usepackage{fancyhdr, amsfonts, amsmath, amsthm, amssymb, tikz, array, graphicx, booktabs, setspace, tocloft, titling, blindtext, caption, multicol, hyperref}
\usepackage{parskip}

\newcommand{\rowvector}[1]{\begin{bmatrix} #1 \end{bmatrix}}

\newcommand{\smallrowvector}[1]{\left[\begin{smallmatrix} #1 \end{smallmatrix}\right]}

\newcommand{\GraphNumber}{}
\newcommand{\GraphTitle}{}
\newcommand{\VertexCount}{}
\newcommand{\IntersectionArray}{}

\newcommand{\DateString}{}

\title{\textbf{The Terwilliger algebra for the distance-regular graphs with valency three}}
\author{\\ Kevin Kauflin, Paul Terwilliger, Barnab\'as Valk\'o,\\ Jimmy Vineyard, Hanyi Wu}
\date{}

\newtheorem{theorem}{Theorem}
\newtheorem{definition}[theorem]{Definition}

\begin{document}

\maketitle

\begin{abstract}
In this paper, we discuss a family of highly regular graphs, said to be distance-regular. We are particularly interested in the distance-regular graphs with valency three. It is known that there exist exactly 13 such graphs. Let $\Gamma$ denote a distance-regular graph with vertex set $X$. For any vertex $x \in X$, the corresponding Terwilliger algebra $T=T(x)$ is generated by the adjacency algebra $M$ of $\Gamma$ and the dual adjacency algebra $M^*=M^*(x)$ of $\Gamma$ with respect to $x$. It is known that the algebra $T$ is semisimple. By construction, the vector space $V=\mathbb{C}^X$ is a module for $T$, said to be standard. In this paper we have the following goal. For each of the 13 distance-regular graphs $\Gamma$ with valency three, we will decompose the standard module $V$ into a direct sum of irreducible $T$-modules. Using this information, we will work out the dimension of $T$.

\vspace{0.5cm}
\noindent\textbf{Keywords.} Terwilliger algebra; distance-regular graph; irreducible $T$-module.

\noindent\textbf{2020 Mathematics Subject Classification.} Primary: 05E30. Secondary: 05C50.
\end{abstract}

\section{Introduction}

In this paper, we discuss a family of highly regular graphs, said to be distance-regular \cite{bannaiitotanaka,bannaiito1984,brouwer}. We are particularly interested in the distance-regular graphs with valency three. By \cite[Theorem~7.5.1]{brouwer}, there are exactly 13 such graphs; these are listed in Table~\ref{tab:13graphs}.

Let $\Gamma$ denote a distance-regular graph with vertex set $X$. For any vertex $x\in X$, there is an algebra $T=T(x)$ called the Terwilliger algebra \cite{terwilliger2022,Terwilliger1992}. The algebra $T$ is generated by the adjacency algebra $M$ of $\Gamma$ and the dual adjacency algebra $M^*=M^*(x)$ of $\Gamma$ with respect to $x$. It is known that the algebra $T$ is semisimple \cite[Sec.\ 1]{Egge2000}. By construction, the vector space $V=\mathbb{C}^X$ is a module for $T$. We call $V$ the standard module.

We now explain the goal of the paper. For each of the 13 distance-regular graphs $\Gamma$ with valency three, we will decompose the standard module $V$ into a direct sum of irreducible $T$-modules. Using this information, we will work out the dimension of $T$.

The paper is organized as follows. Section~\ref{subsec:defs} contains some preliminaries. Section~\ref{subsec:thms} contains some basic theorems that will be used throughout the main body of the paper. In Section~\ref{subsec:method} we summarize our method. Each of Sections~\ref{sec:k4}--\ref{sec:foster} is devoted to one of the 13 graphs. In Section~\ref{sec:conclusion}, we give a summary table.

\section{Preliminaries}\label{subsec:defs}

In this section, we recall some preliminaries concerning distance-regular graphs.

Throughout the paper, $X$ denotes a nonempty finite set. Elements in $X$ are called \emph{vertices}. Let $V=\mathbb{C}^X$ denote the vector space over $\mathbb{C}$ consisting of column vectors with coordinates indexed by $X$ and all entries in $\mathbb{C}$. Let $\mathrm{Mat}_X(\mathbb{C})$ denote the $\mathbb{C}$-algebra consisting of the matrices with rows and columns indexed by $X$ and all entries in $\mathbb{C}$. The algebra $\mathrm{Mat}_X(\mathbb{C})$ acts on $V$ by left multiplication. We call $V$ the \emph{standard module}. We endow $V$ with the Hermitian inner product such that $\langle u, v \rangle = u^t \bar{v}$ for all $u, v \in V$. Here $t$ denotes transpose and $\bar{~}$ denotes complex conjugation.

Let $\Gamma=(X,\mathcal{E})$ denote a finite, undirected, connected graph, without loops or multiple edges, with vertex set $X$ and edge set $\mathcal{E}$. Vertices $y,z \in X$ are said to be \emph{adjacent} whenever they form an edge. For vertices $y,z\in X$, let $d(y,z)$ denote the path-length distance between $y$ and $z$. By the \emph{diameter} of $\Gamma$, we mean the integer $D = \max\{d(y,z) \mid y,z \in X\}$. For the rest of this paper, we assume $D \ge 1$.

 For $x\in X$ and $0 \le i \le D$, define the set $\Gamma_i(x)=\{y \in X \mid d(x,y)=i\}$. We call $\Gamma_i(x)$ the $i$th \textit{subconstituent of $\Gamma$ with respect to $x$}. We abbreviate $\Gamma(x)=\Gamma_1(x)$. The graph $\Gamma$ is called \emph{regular with valency $k$} whenever $|\Gamma(x)|=k$ for every $x\in X$. The graph $\Gamma$ is called \textit{distance-regular} whenever for all $0 \le h,i,j \le D$ and $x,y \in X$ at $d(x,y) =h$, the scalar $p_{i,j}^h=|\Gamma_i(x)\cap \Gamma_j(y)|$ is independent of the choice of $x,y$.

We abbreviate 
\begin{equation*}
c_i=p_{1,i-1}^i\, (1 \le i \le D), \quad
a_i=p_{1,i}^i\, (0 \le i \le D), \quad
b_i=p_{1,i+1}^i\, (0 \le i \le D-1).
\end{equation*}
From now on, assume that $\Gamma$ is distance-regular. Observe that $\Gamma$ is regular with valency $k=b_0$. We have $$c_i+a_i+b_i=k\quad \quad (0\le i \le D),$$
where $c_0=0$ and $b_D=0$. By the \textit{intersection array} of $\Gamma$, we mean the sequence $$\{b_0,b_1,\dots,b_{D-1};c_1,c_2,\dots,c_D\}.$$
We now recall the adjacency algebra of $\Gamma$. Define a matrix $A \in \mathrm{Mat}_X(\mathbb{C})$ with $(y,z)$-entry
$$A_{y,z} = \begin{cases}
    1, & \text{if $y,z$ are adjacent},\\
    0, & \text{if $y,z$ are not adjacent.}
\end{cases} \quad \quad \quad \quad(y,z \in X)$$
We call $A$ the \emph{adjacency matrix} of $\Gamma$. The matrix $A$ is real and symmetric, and therefore diagonalizable. The \textit{spectrum} of $A$ is the multiset consisting of the roots of the characteristic polynomial of $A$. Let $M$ denote the subalgebra of $\mathrm{Mat}_X(\mathbb{C})$ generated by $A$. We call $M$ the \emph{adjacency algebra} of $\Gamma$.

We now recall the dual adjacency algebras of $\Gamma$. From now on, fix a vertex $x \in X$. We call $x$ the \emph{base vertex}. For $0 \le i \le D$, define a diagonal matrix $E_i^* = E_i^*(x) \in \mathrm{Mat}_X(\mathbb{C})$ with $(y,y)$-entry
$${(E_i^*)}_{y,y} = \begin{cases}
    1, & \text{if $d(x,y)=i$},\\
    0, & \text{if $d(x,y)\ne i$.}
\end{cases} \quad \quad \quad \quad(y \in X)$$
By construction, $$E_i^*E_j^* = \delta_{i,j}E_i^*\quad (0 \le i,j \le D), \quad \quad  I = \sum_{i=0}^D E_i^*.$$
By these comments, the matrices $\{E_i^*\}_{i=0}^D$ form a basis for a commutative subalgebra $M^*=M^*(x)$ of $\mathrm{Mat}_X(\mathbb{C})$. We call $M^*$ the \emph{dual adjacency algebra of $\Gamma$ with respect to $x$} \cite[Sec.\ 2]{terwilliger2022}.

Let $T=T(x)$ denote the subalgebra of $\mathrm{Mat}_X(\mathbb{C})$ generated by $M$ and $M^*$. We call $T$ the \textit{Terwilliger algebra of $\Gamma$ with respect to $x$} \cite[Definition~3.3]{Terwilliger1992}. By construction, $T$ is closed under the conjugate transpose map. Consequently $T$ is semisimple \cite[Sec.\ 1]{Egge2000}. 

We now discuss the $T$-modules. By a \emph{$T$-module}, we mean a subspace $W \subseteq V$ such that $BW \subseteq W$ for all $B \in T$. A $T$-module $W$ is said to be \emph{irreducible} whenever $W \ne 0$ and $W$ does not properly contain a nonzero $T$-module.  Let $W_1$ and $W_2$ denote irreducible $T$-modules. By a \textit{$T$-module isomorphism from $W_1$ to $W_2$}, we mean a vector space isomorphism $\sigma : W_1 \to W_2$ such that $\sigma B = B\sigma$ on $W_1$ for all $B \in T$. We say that the $T$-modules $W_1$ and $W_2$ are \textit{isomorphic} whenever there exists a $T$-module isomorphism from $W_1$ to $W_2$. Let $W$ denote a $T$-module. By the \emph{endpoint} of $W$, we mean the integer $\min\{i \mid 0\le i \le D,\;\; E_i^*W \ne 0\}$.

Let $W$ denote an irreducible $T$-module. By the {\it diameter} of $W$, we mean the integer
        $$\bigl \vert \lbrace i \mid 0 \leq i \leq D, \;\; E^*_i W \not=0 \rbrace \bigr\vert -1.$$
By the {\it shape} of $W$, we mean the sequence
       $$ \bigl({\rm dim}\, E^*_rW, {\rm dim}\, E^*_{r+1}W, \ldots, {\rm dim} \, E^*_{r+d}W \bigr),$$
where $r$ (resp. $d$) denotes the endpoint (resp. diameter) of $W$.

\section{Theorems}\label{subsec:thms}

In this section, we give some basic theorems that will be used throughout the rest of the paper. Throughout this section, $\Gamma=(X,\mathcal{E})$ denotes a distance-regular graph. We fix a vertex $x \in X$ and consider the algebra $T=T(x)$.

\begin{definition}\label{def:primary} \rm
For $0\le i \le D$, let $e_i^*=e_i^*(x) \in V$ denote the vector with $y$-th entry
\[
(e_i^*)_y = 
\begin{cases} 1, & \mathrm{if} \text{ } d(x,y) = i, \\
0, & \mathrm{if} \text{ } d(x,y) \ne i. \end{cases} \quad\quad (y \in X)
\]
\end{definition}

\begin{theorem}\label{thm:primary} {\rm (See \cite[Lemma~7.1]{terwilliger2022}).}
The subspace $W_0=\mathrm{span}\{e_i^* \mid 0 \le i \le D\}$ is the unique irreducible $T$-module of endpoint $0$.
\end{theorem}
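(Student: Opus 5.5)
We must show three things: $W_0$ is a $T$-module, $W_0$ is irreducible, and any irreducible $T$-module $W$ with endpoint $0$ equals $W_0$. Since $T$ is generated by $A$ and $\{E_i^*\}_{i=0}^D$, it suffices to check that $W_0$ is invariant under these generators.

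\emph{Invariance.} We have $E_j^* e_i^* = \delta_{i,j} e_i^*$, so $M^* W_0 \subseteq W_0$. For $A$ we compute entrywise. For $y \in \Gamma_j(x)$, the $y$-entry of $Ae_i^*$ is $|\Gamma(y)\cap \Gamma_i(x)|$. This count is $c_j$ if $i=j-1$, $a_j$ if $i=j$, $b_j$ if $i=j+1$, and $0$ otherwise, by distance-regularity and the triangle inequality. Hence
\[
Ae_i^* = b_{i-1} e_{i-1}^* + a_i e_i^* + c_{i+1} e_{i+1}^*,
\]
with the conventions $e_{-1}^*=e_{D+1}^*=0$. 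So $AW_0\subseteq W_0$, and $W_0$ is a $T$-module. Its endpoint is $0$ because $e_0^*\ne 0$.

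\emph{Irreducibility.} Let $0\ne W'\subseteq W_0$ be a $T$-module and pick $0\ne w\in W'$, say $w=\sum \lambda_i e_i^*$ with $\lambda_i \ne 0$. Then $E_i^* w=\lambda_i e_i^*\in W'$, so $e_i^*\in W'$. Since $c_j\ne 0$ and $b_j\ne 0$ in the relevant ranges (by connectivity), the three-term relation lets us reach every $e_j^*$ from $e_i^*$. For example, $E^*_{i+1}Ae_i^* = c_{i+1}e_{i+1}^*$ and $E^*_{i-1}Ae_i^*=b_{i-1}e_{i-1}^*$. Hence $W'=W_0$.

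\emph{Uniqueness.} Let $W$ be an irreducible $T$-module with endpoint $0$. Then $E_0^*W\ne 0$, and $E_0^*V=\mathrm{span}\{e_0^*\}$, so $E_0^*W=\mathrm{span}\{e_0^*\}$ and therefore $e_0^*\in W$. Now $W_0$ is the $T$-module generated by $e_0^*$: by the previous step it is contained in the cyclic module $Te_0^*$, and it contains $e_0^*$ and is $T$-invariant. Thus $W_0 = Te_0^*\subseteq W$. Since $W$ is irreducible and $W_0\ne 0$, we get $W=W_0$.

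The only computation needing any care is the three-term recurrence. It relies on the observation that a neighbor of $y\in\Gamma_j(x)$ must lie in $\Gamma_{j-1}(x)\cup\Gamma_j(x)\cup\Gamma_{j+1}(x)$, together with the definitions of $c_j,a_j,b_j$. The remaining steps are formal.
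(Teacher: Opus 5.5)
Your proof is correct and follows the standard argument behind the cited Lemma~7.1 of Terwilliger's survey; the paper gives no proof of its own beyond that citation. The standard argument runs: $W_0$ is $T$-invariant and irreducible, and any irreducible $W$ with $E_0^*W\neq 0$ contains $e_0^*$, hence contains $W_0$. The only cosmetic difference is that the standard proof gets invariance from $e_i^*=A_ie_0^*=E_i^*\mathbf{1}$ (with $A_i\in M$ the distance-$i$ matrix and $\mathbf{1}$ the all-ones vector), so $W_0=Me_0^*=M^*\mathbf{1}$. You instead check $AW_0\subseteq W_0$ directly via the three-term recurrence, which also produces the tridiagonal matrices displayed throughout the paper.
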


Referring to Theorem~\ref{thm:primary}, the $T$-module $W_0$ is called {\it primary}.

\begin{theorem}\label{thm:orthocomp}{\rm (See \cite{terwilliger2022,Terwilliger1992}).}
Let $W$ denote a $T$-module. Then its orthogonal complement 
$W^\perp = \{v \in V \mid \langle v,w\rangle = 0 \text{ for all } w \in W\}$ 
is also a $T$-module.
\end{theorem}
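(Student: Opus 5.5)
The plan is to use the fact, noted in Section~\ref{subsec:defs}, that $T$ is closed under the conjugate transpose map $B \mapsto \bar{B}^t$. Together with the definition of the Hermitian form, this shows that every $B\in T$ has an adjoint in $T$ with respect to $\langle\,,\,\rangle$. The orthogonal complement of a $T$-invariant subspace is then invariant, by the standard adjoint argument.

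The first step is to record the adjoint identity. For $B \in \mathrm{Mat}_X(\mathbb{C})$ and $u,v\in V$ we have
\[
\langle Bu, v\rangle = (Bu)^t \bar{v} = u^t B^t \bar{v} = u^t \overline{\bar{B}^t v} = \langle u, \bar{B}^t v\rangle .
\]

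The second step checks the closure claim. The generators $A$ and $E_i^*$ are real symmetric matrices, so each is fixed by conjugate transpose. The map $B\mapsto \bar{B}^t$ is conjugate-linear and reverses products. Hence it sends the algebra generated by $A$ and $\{E_i^*\}_{i=0}^D$ into itself. In particular $\bar{B}^t \in T$ for every $B \in T$.

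The final step is the invariance. Let $v \in W^\perp$ and $B\in T$. For every $w \in W$,
\[
\langle Bv, w\rangle = \langle v, \bar{B}^t w\rangle = 0,
\]
because $\bar{B}^t\in T$ and $W$ is a $T$-module, so $\bar{B}^t w\in W$. Therefore $Bv \in W^\perp$. Since $W^\perp$ is clearly a subspace, it is a $T$-module.

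There is no real obstacle here. The only point needing care is the placement of conjugations in the adjoint identity, given the convention $\langle u,v\rangle = u^t\bar v$, which is linear in the first argument.
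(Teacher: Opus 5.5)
Your proof is correct and follows the paper's argument: for $u\in W^\perp$, $B\in T$ and $w\in W$, compute $\langle Bu,w\rangle=\langle u,\bar{B}^t w\rangle=0$, using that $T$ is closed under conjugate transpose. You also verify the adjoint identity and the closure of $T$ under $B\mapsto \bar{B}^t$ via the real symmetric generators, which the paper only states in Section~\ref{subsec:defs}.
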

\begin{proof}
Let $u \in W^\perp$ and $B \in T$. We show that $B u \in W^\perp$. Let $w\in W$. Then $\langle Bu, w\rangle = \langle u, \bar{B}^t w\rangle = 0$. By these comments, $B u \in W^\perp$.

\end{proof}

\begin{theorem}\label{thm:directsum} {\rm (See \cite[Lemma~3.4]{Terwilliger1992}).}
The standard module $V$ is an orthogonal direct sum of irreducible $T$-modules.
\end{theorem}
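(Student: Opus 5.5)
We argue by induction on the dimension, using Theorem~\ref{thm:orthocomp} as the main tool. More precisely, we prove the following stronger claim: every nonzero $T$-module $W \subseteq V$ is an orthogonal direct sum of irreducible $T$-modules. The theorem is the special case $W=V$. Note that $V \neq 0$ because $X$ is nonempty.

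Let $W$ be a nonzero $T$-module, and induct on $\dim W$. Since $W$ is finite-dimensional and nonzero, it contains a nonzero $T$-module $W_1$ of minimal dimension among the nonzero $T$-modules inside $W$. By minimality, $W_1$ properly contains no nonzero $T$-module, so $W_1$ is irreducible. If $W_1 = W$ we are done.

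Otherwise, consider $W' = W \cap W_1^\perp$. By Theorem~\ref{thm:orthocomp}, $W_1^\perp$ is a $T$-module. An intersection of two $T$-modules is again a $T$-module, so $W'$ is a $T$-module. The Hermitian inner product $\langle\,,\rangle$ is positive definite on $V$, so its restriction to $W$ is positive definite. Hence $W = W_1 \oplus W'$ as an orthogonal direct sum, with $\dim W' = \dim W - \dim W_1 < \dim W$. Also $W' \neq 0$, since $W_1 \neq W$.

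By the induction hypothesis, $W'$ is an orthogonal direct sum of irreducible $T$-modules. Each of these summands lies in $W_1^\perp$, so adjoining $W_1$ gives an orthogonal direct sum decomposition of $W$ into irreducible $T$-modules. This completes the induction.

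The only point requiring care is the orthogonal splitting $W = W_1 \oplus (W\cap W_1^\perp)$. It relies on the positive definiteness of $\langle u,v\rangle = u^t\bar v$, which holds because $\langle u,u\rangle = \sum_y |u_y|^2$. It also relies on Theorem~\ref{thm:orthocomp}, which uses the closure of $T$ under conjugate transpose. Beyond this, no real obstacle is expected.
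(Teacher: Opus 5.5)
Your proof is correct and is essentially the standard argument behind the cited Lemma~3.4 (the paper gives no proof of its own): closure of $T$ under conjugate transpose makes orthogonal complements of $T$-modules into $T$-modules (Theorem~\ref{thm:orthocomp}). Positive definiteness of $\langle\,,\rangle$ then lets you split off a minimal-dimensional (hence irreducible) submodule via $W = W_1 \oplus (W\cap W_1^\perp)$ and induct on dimension.
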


By Theorem~\ref{thm:directsum}, the standard module $V$ decomposes into an orthogonal direct sum of irreducible $T$-modules. Let $W$ denote an irreducible $T$-module. By the \textit{multiplicity} of $W$, we mean the number of irreducible $T$-modules in the decomposition that are isomorphic to $W$.

\begin{theorem}\label{thm:Wedderburn}
Let $\Lambda$ denote the set of isomorphism classes of irreducible $T$-modules. 
For $\lambda\in\Lambda$ let $\dim\lambda$ denote the common dimension 
of the irreducible $T$-modules in the $\lambda$ class. Then 
\[
  \dim T = \sum_{\lambda\in\Lambda}(\dim\lambda)^2.
\]
\end{theorem}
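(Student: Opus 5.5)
This is the Wedderburn–Artin theorem applied to the semisimple algebra $T$. The plan is to exhibit $T$ as isomorphic to a direct sum of full matrix algebras, one for each isomorphism class $\lambda\in\Lambda$, of size $\dim\lambda$. The dimension formula then follows at once.

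\textbf{Step 1: the decomposition of $V$.} By Theorem~\ref{thm:directsum}, write $V=\bigoplus_{j} W_j$ as an orthogonal direct sum of irreducible $T$-modules. Group the summands by isomorphism class and let $m_\lambda\ge 1$ be the multiplicity of $\lambda$. Every irreducible $T$-module is isomorphic to some $W_j$. To see this, take an irreducible $W\subseteq V$ and let $\pi_j$ be the orthogonal projection onto $W_j$. Since $T$ is closed under conjugate transpose, the proof of Theorem~\ref{thm:orthocomp} shows each $\pi_j$ commutes with $T$. Some $\pi_j$ is nonzero on $W$, and Schur's lemma makes $\pi_j|_W$ an isomorphism onto $W_j$. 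Hence $\Lambda$ is exactly the set of classes occurring in the decomposition.

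\textbf{Step 2: the representation map is injective.} Define
\[
\rho : T \to \bigoplus_{\lambda\in\Lambda}\mathrm{End}(W_\lambda),
\]
where $W_\lambda$ is a fixed representative of $\lambda$. Here $B\in T$ acts on $V$ blockwise, and on isomorphic blocks it acts by conjugate copies of the same operator. So $B$ is determined by its action on the representatives, and $\rho$ is injective because $T\subseteq \mathrm{Mat}_X(\mathbb{C})$ acts faithfully on $V$.

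\textbf{Step 3: the representation map is surjective.} This is the main obstacle, and I would handle it with the Jacobson density theorem (Burnside's theorem plus Chinese-remainder-type separation). By Burnside's theorem, each single representation $T\to\mathrm{End}(W_\lambda)$ is onto, since $W_\lambda$ is irreducible over the algebraically closed field $\mathbb{C}$.

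To get the joint map onto, argue as follows. The image $\rho(T)$ is a $*$-closed subalgebra whose module $\bigoplus_\lambda W_\lambda$ is a sum of pairwise nonisomorphic irreducibles. Its commutant consists of the block-diagonal scalars $\bigoplus_\lambda \mathbb{C}\, I_{W_\lambda}$, by Schur's lemma together with the nonisomorphism of the blocks. The double commutant theorem for $*$-closed unital subalgebras of matrices then gives $\rho(T)=\bigoplus_\lambda \mathrm{End}(W_\lambda)$. An alternative route is to use the central idempotents obtained from the projections onto isotypic components, which lie in $T$ by the same double commutant argument.

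\textbf{Conclusion.} Since $\rho$ is a bijective linear map,
\[
\dim T=\sum_{\lambda\in\Lambda}\dim \mathrm{End}(W_\lambda)=\sum_{\lambda\in\Lambda}(\dim\lambda)^2 .
\]
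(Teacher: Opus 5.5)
Your proposal is correct and follows the same route as the paper, whose proof simply cites Wedderburn theory (Curtis--Reiner): both rest on the isomorphism $T\cong\bigoplus_{\lambda\in\Lambda}\mathrm{End}(W_\lambda)$. You make that citation explicit for this $*$-closed matrix algebra, using faithfulness on $V$ for injectivity and the double commutant theorem, with Schur's lemma computing the commutant, for surjectivity.
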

\begin{proof}

Follows from the Wedderburn theory \cite{CurtisReiner1962}.
\end{proof}

\begin{theorem}\label{thm:irred}
Let $W$ denote a $T$-module with endpoint $r$ such that $\dim E_r^* W  = 1$. Assume that $W = T E_r^* W$. Then the $T$-module $W$ is irreducible.
\end{theorem}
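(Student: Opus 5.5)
The plan is to argue by contradiction, using the orthogonal decomposition from Theorem~\ref{thm:directsum} applied inside $W$. Since $W$ is a $T$-module with endpoint $r$, we have $E_r^*W\ne 0$, so $W\ne 0$. Suppose $W$ is not irreducible. Then $W$ properly contains a nonzero $T$-module $U$. By Theorem~\ref{thm:orthocomp}, the orthogonal complement of $U$ in $V$ is a $T$-module. Intersecting it with $W$ gives a $T$-module $U' = U^\perp\cap W$, and $W = U + U'$ is an orthogonal direct sum. Because $U\ne W$, we also have $U'\ne 0$. So $W$ splits as a direct sum $U\oplus U'$ of two nonzero $T$-modules.

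Next we look at what the idempotent $E_r^*$ does to this splitting. Since $E_r^*\in T$, it maps each of $U$ and $U'$ into itself. Hence
\[
E_r^*W = E_r^*U + E_r^*U',
\]
and this sum is direct because the two pieces lie in $U$ and $U'$. By hypothesis $\dim E_r^*W=1$, so one of the summands vanishes; without loss of generality $E_r^*U'=0$. Then $E_r^*W = E_r^*U\subseteq U$. Because $U$ is a $T$-module, this gives
\[
W = TE_r^*W \subseteq TU \subseteq U.
\]
This contradicts $U\ne W$, so $W$ is irreducible.

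The only step needing care is verifying that $U^\perp\cap W$ is a $T$-module complementing $U$ in $W$. This holds because intersections of $T$-modules are $T$-modules. Finite-dimensionality, together with the fact that the Hermitian form is positive definite, gives $W = U\oplus (U^\perp\cap W)$. I do not expect a genuine obstacle; the argument is short once the complement is available. As an alternative that avoids complements, I can take $U$ a proper nonzero submodule and split into two cases. If $E_r^*U\ne 0$, then $E_r^*U=E_r^*W$, so $W=TE_r^*W\subseteq U$. Otherwise the complement $U'$ must contain $E_r^*W$, and the same inclusion $W\subseteq U'$ forces $U=0$, again a contradiction.
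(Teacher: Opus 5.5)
Your proof is correct and is essentially the paper's argument. The paper splits into the cases $E_r^*U=0$ (then $E_r^*W\subseteq U^\perp$, so $W=TE_r^*W\subseteq U^\perp$ and $U=0$) and $E_r^*U=E_r^*W$ (then $W=TE_r^*U\subseteq U$), which is exactly your closing ``alternative''; your main version just symmetrizes this through $W=U\oplus(U^\perp\cap W)$, where the WLOG is justified because $U^\perp\cap W$ is also a proper nonzero submodule.
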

\begin{proof}
Let $U$ denote a $T$-module contained in $W$. We show that $U=0$ or $U=W$.
By construction $E^*_r U \subseteq E^*_rW$, so $E^*_r U=0$ or $E^*_rU=E^*_rW$.
In the first case, $E^*_rW \subseteq U^\perp$ so $W = TE^*_rW \subseteq T U^\perp
\subseteq U^\perp$ so $U=0$. In the second case, $W=TE^*_rW=T E^*_r U \subseteq U$
so $U=W$.
\end{proof}

\begin{definition}\rm For $0 \ne v \in V$, call $v$ \textit{pure} whenever there exists an integer $i$\, $(0 \le i \le D)$ such that $v \in E_i^*V$. We call $i$ the {\it support} of $v$.
\end{definition} 

\begin{definition} \rm
 Let $W$ denote an irreducible $T$-module. A basis $\lbrace v_1, v_2, \ldots, v_\ell \rbrace$ for $W$ is called {\it pure} whenever
the following conditions are met:
\begin{enumerate}
\item[\rm (i)] the vector $v_i$ is pure for $1 \leq i \leq \ell$; 
\item[\rm (ii)] the support of $v_{i-1}$ is at most the support of $v_i$ for $2 \leq i\leq \ell$.
\end{enumerate}
\end{definition}

\begin{theorem}\label{thm:isom}
Let $W_1$ and $W_2$ denote irreducible $T$-modules. Then the $T$-modules $W_1$ and $W_2$ are isomorphic if and only if the following conditions hold:
\begin{enumerate}
  \item[\rm(i)] $\dim E_i^* W_1 = \dim E_i^* W_2$ for $0\le i\le D$; 
  
  \item[\rm(ii)] there exist pure bases $\mathcal{B}_1$ for $W_1$ and $\mathcal{B}_2$ for $W_2$, such that the matrix representing $A$ with respect to $\mathcal{B}_1$ equals the matrix representing $A$ with respect to $\mathcal{B}_2$.
\end{enumerate}
\end{theorem}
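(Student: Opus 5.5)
We prove the two directions separately. The main tool is that $T$ is generated by $A$ and the projections $E_0^*,\dots,E_D^*$, so a linear map commuting with these generators commutes with all of $T$.

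For the forward direction, let $\sigma: W_1 \to W_2$ be a $T$-module isomorphism. Since $\sigma E_i^* = E_i^* \sigma$ on $W_1$, the map $\sigma$ sends $E_i^*W_1$ into $E_i^*W_2$. Applying the same argument to $\sigma^{-1}$ gives the reverse inclusion, so $\sigma$ restricts to a bijection $E_i^*W_1 \to E_i^*W_2$, which yields (i). For (ii), note that $W_1 = \sum_i E_i^*W_1$ because $I=\sum_i E_i^*$, and this sum is direct. Choose a basis of each nonzero $E_i^*W_1$ and concatenate these bases in order of increasing $i$; the result is a pure basis $\mathcal{B}_1=\{v_1,\dots,v_\ell\}$ for $W_1$. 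Its image $\mathcal{B}_2=\{\sigma v_1,\dots,\sigma v_\ell\}$ is then a basis for $W_2$. Since $\sigma v_j \in E_i^*W_2$ whenever $v_j \in E_i^*W_1$, the basis $\mathcal{B}_2$ is pure with the same supports. Finally, if $Av_j=\sum_k m_{kj}v_k$, then applying $\sigma$ gives $A\sigma v_j=\sigma A v_j=\sum_k m_{kj}\sigma v_k$, so $A$ has the same matrix with respect to $\mathcal{B}_1$ and $\mathcal{B}_2$.

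For the converse, take pure bases $\mathcal{B}_1=\{v_j\}$ and $\mathcal{B}_2=\{w_j\}$ as in (ii), and define the vector space isomorphism $\sigma:W_1\to W_2$ by $\sigma v_j = w_j$. Because the two matrices representing $A$ agree, $\sigma A = A\sigma$ on $W_1$. The main obstacle is to show that $\sigma$ also commutes with each $E_i^*$, which requires that $v_j$ and $w_j$ have the same support for every $j$. Condition (i) gives this. In a pure basis the supports are nondecreasing, and each support $i$ occurs exactly $\dim E_i^*W$ times: the vectors of support $i$ lie in $E_i^*W$ and are linearly independent, and they must span $E_i^*W$ since the spaces $E_i^*W$ form a direct sum equal to $W$. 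Hence the sequence of supports of a pure basis is determined by the numbers $\dim E_i^*W$. By (i) these numbers agree for $W_1$ and $W_2$, so the support sequences coincide, and $v_j$ and $w_j$ have the same support for every $j$.

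It follows that $\sigma E_i^* v_j = E_i^* w_j = E_i^*\sigma v_j$ for all $i$ and $j$, since both sides equal $w_j$ or $0$ according to the common support. Thus $\sigma$ commutes with $A$ and with every $E_i^*$ on $W_1$. The set of $B\in T$ with $\sigma B=B\sigma$ on $W_1$ is a subalgebra containing these generators, so it is all of $T$. Therefore $\sigma$ is a $T$-module isomorphism from $W_1$ to $W_2$. Irreducibility of $W_1$ and $W_2$ plays no role in the argument beyond fitting the context of the statement.
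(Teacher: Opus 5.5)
Your proof is correct, and it is the elementary linear-algebra argument that the paper leaves to the reader; the paper's proof says only ``Follows from elementary linear algebra''. You also handle the one point that needs care: condition (i) forces the two pure bases to have the same support sequence, so the basis-matching map $\sigma$ commutes with every $E_i^*$ as well as with $A$, and therefore with all of $T$.
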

\begin{proof}
Follows from elementary linear algebra.
\end{proof}

We have a comment about notation. Let $v \in V$ denote a pure vector, with support $i$. By the \textit{essential part of $v$}, we mean the vector obtained from $v$ by deleting all the coordinates attached to vertices not at distance $i$ from $x$. For notational convenience, we will display the essential part of $v$ as a row vector.

\section{Method}\label{subsec:method}

By \cite[Theorem~7.5.1]{brouwer}, there are exactly 13 distance-regular graphs with valency three. These graphs are shown in the following table.

    \begin{center}
        \begin{tabular}{lclc}
        \toprule
            Graph $\Gamma$ & Diam. $D$ & Intersection array & $|X|$ \\
            \midrule
            $K_4$  & 1& $\{3;1\}$ & $4$\\
             $K_{3,3}$ & 2& $\{3,2;1,3\}$ &$6$ \\
            Petersen &2& $\{3,2;1,1\}$ & $10$\\
            3-cube &3& $\{3,2,1;1,2,3\}$ & $8$\\
            Heawood &3& $\{3,2,2;1,1,3\}$ & $14$\\
            Pappus &4& $\{3,2,2,1;1,1,2,3\}$ & $18$\\
            Coxeter &4& $\{3,2,2,1;1,1,1,2\}$ & $28$\\
           Tutte's 8-cage  &4& $\{3,2,2,2;1,1,1,3\}$ & $30$\\
            Dodecahedron &5& $\{3,2,1,1,1;1,1,1,2,3\}$ &$20$ \\
           Desargues  &5& $\{3,2,2,1,1;1,1,2,2,3\}$ & $20$\\
            Tutte's 12-cage &6& $\{3,2,2,2,2,2;1,1,1,1,1,3\}$ & $126$\\
           Biggs-Smith  &7& $\{3,2,2,2,1,1,1;1,1,1,1,1,1,3\}$ & $102$ \\
           Foster  &8& $\{3,2,2,2,2,1,1,1;1,1,1,1,2,2,2,3\}$ & $90$ \\
           \bottomrule
      \end{tabular}
    \captionof{table}{\textit{The 13 distance-regular graphs of valency three}\label{tab:13graphs}}
    \end{center}

Assume that $\Gamma$ is a distance-regular graph with valency three.
For the moment, assume that $\Gamma$ is not Tutte's 12-cage. In an upcoming section we will fix
a base vertex $x \in X$ and consider $T=T(x)$. The choice of $x$ is unimportant as $\Gamma$ is distance-transitive by \cite[Theorem~7.5.1]{brouwer}. 

Next assume that $\Gamma$ is Tutte's 12-cage. Then $\Gamma$ is bipartite; let $X=X^+ \cup X^-$ denote the bipartition. By \cite[Theorem~1.1]{iofinova1994} the sets $X^+$ and $X^-$ are the orbits for the automorphism group of $\Gamma$. In two upcoming sections, we will separately consider $T=T(x)$ for $x \in X^+$ and $x \in X^-$.

For each pair $\Gamma, x$ described above,
we will decompose the standard module $V$ into an orthogonal
direct sum of irreducible modules for $T=T(x)$. For each irreducible $T$-module $W$ in our decompositions, we give the endpoint of $W$, the multiplicity of $W$, the dimension of $W$, the diameter of $W$, the shape of $W$, and the isomorphism type of $W$. In addition, we give a pure basis for $W$ and the action of $A$ on that basis.
As we will see, $\dim E_r^* W =1$, where $r$ is the endpoint of $W$. By the \textit{seed vector} for $W$, we mean the unique vector in our basis that is contained in $E_r^* W$. 

We have a comment about how we display each graph $\Gamma$. We order the vertices of $\Gamma$ in a manner consistent with the subconstituents with respect to $x$. In order to increase the clarity of our figures, we will color every vertex. Vertices get the same color whenever they are in the same subconstituent with respect to $x$.

\renewcommand{\GraphNumber}{1}
\renewcommand{\GraphTitle}{$K_4$}
\renewcommand{\VertexCount}{4}
\renewcommand{\IntersectionArray}{\{3;1\}}
\renewcommand{\DateString}{2026.04.18 (orig. 2025.11)}

\section{\texorpdfstring{$K_4$}{K4}}\label{sec:k4}
Throughout this section, we take $\Gamma$ to be the graph $K_4$. $\Gamma$ has 4 vertices and intersection array $\{3;1\}$.
\begin{center}
    \framebox[0.5\linewidth]{\includegraphics[width=0.5\linewidth]{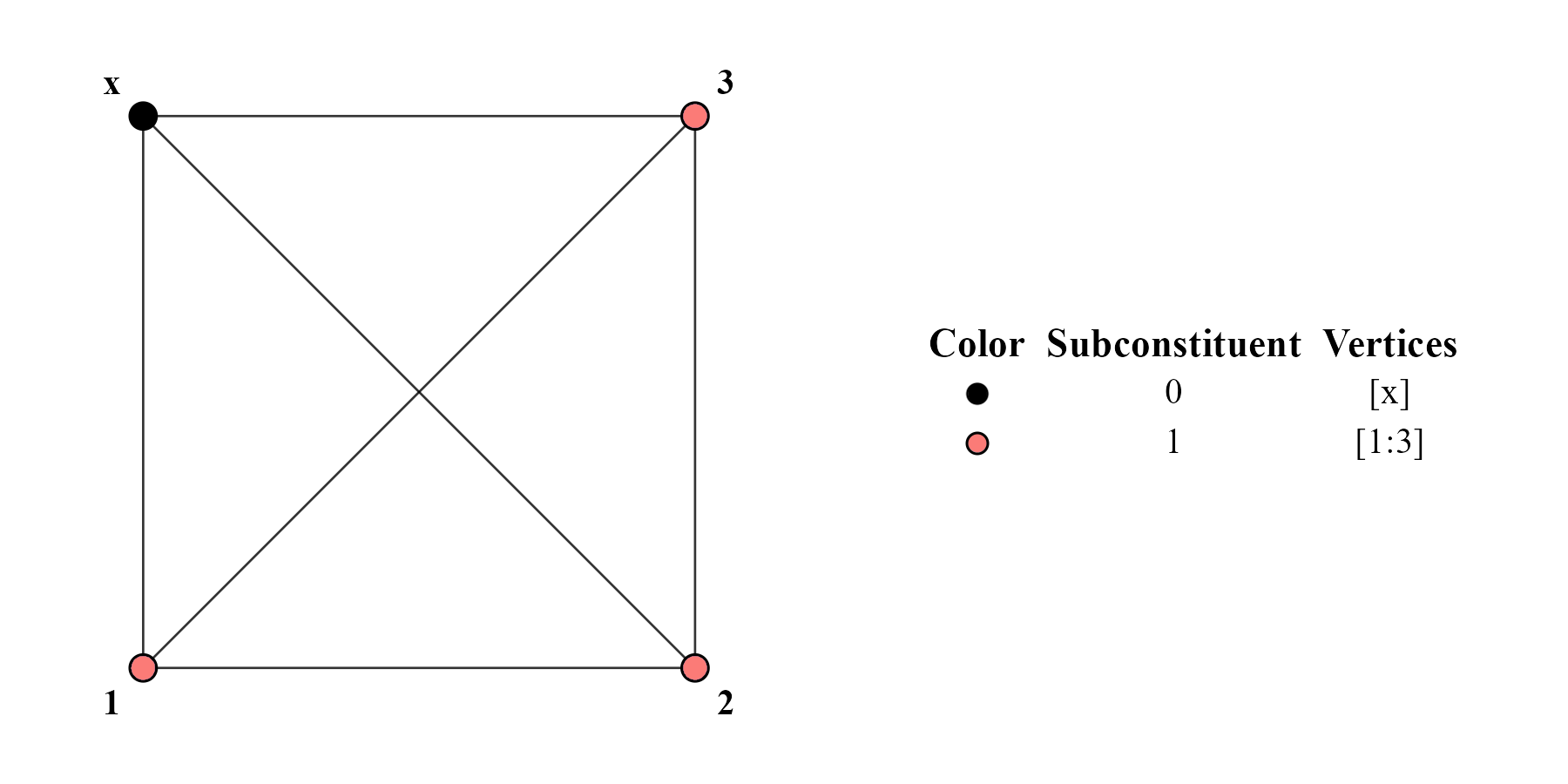}}
    \captionof{figure}{\textit{The graph $K_4$.
}}
\end{center}

The spectrum of the adjacency matrix $A$ is $3^1(-1)^3$.

We decompose the standard module $V$ of $\Gamma$ into an orthogonal direct sum of irreducible $T$-modules. Our decomposition has the form
\[ V = W_0 \oplus W_{1,a} \oplus W_{1,b}. \]
For each irreducible $T$-module in this decomposition, we give the endpoint, the multiplicity, the dimension, the diameter, the shape, the action of $A$ upon an appropriate pure basis, and the isomorphism type.

\begin{center}
\begin{tabular}{c | c c c c c c c}
\toprule
\textbf{Irred. $T$-modules} & \textbf{Endpt.} & \textbf{Mult.} & \textbf{Dim.} & \textbf{Diam.} & \textbf{Shape} & $A$ \textbf{action} & \textbf{Iso. type} \\
\midrule
$W_0$          & 0 & 1 & 2 & 1 & $(1,1)$ & \eqref{eq:k4-I} & I \\
$W_{1,a},\, W_{1,b}$  & 1 & 2 & 1 & 0 & $(1)$ & \eqref{eq:k4-II} & II \\
\bottomrule
\end{tabular}

\smallskip
\captionof{table}{\textit{Irreducible $T$-modules for $K_4$} \\
$(\dim T = 2^2 + 1^2 = 5)$}
\end{center}

For each irreducible $T$-module in the table above, we now give a pure basis and the matrix representing $A$ on that basis. 

\subsection{Endpoint 0}

We now describe the primary irreducible $T$-module $W_0$. The module $W_0$ has a basis $\{e_i^*\}_{i=0}^1$, where $e_i^*$ is from Definition \ref{def:primary}. With respect to this basis the matrix representing $A$ is
\begin{equation}
    A: \begin{bmatrix}
  0 & 3 \\
  1 & 2 
\end{bmatrix}.
\label{eq:k4-I}
\end{equation}
This matrix has eigenvalues $3, -1$.

\subsection{Endpoint 1}

We now describe the Type II irreducible $T$-modules in our decomposition. For Type II, the multiplicity is 2 and the modules are $W_{1,a}$ and $W_{1,b}$. For each module, our basis has the form $\{\nu\}$, where the seed vector $\nu$ is given below.
\begin{center}
    \begin{tabular}{c | c}
    \toprule
       \textbf{Module}  & \textbf{Essential part of }$\nu$\\
       \midrule
       $W_{1,a}$  & $\rowvector{1 & -1 & 0}$ \\[1.67pt]
       $W_{1,b}$  & $\rowvector{1 & 1 & -2}$ \\
    \bottomrule
    \end{tabular}
    \captionof{table}{\textit{The seed vector $\nu$ for each irreducible $T$-module of Type II.}}
\end{center}

With respect to each basis, the matrix representing $A$ is
\begin{equation}
    A: 
\begin{bmatrix}
  -1
\end{bmatrix}.
\label{eq:k4-II}
\end{equation}

This matrix has eigenvalue $-1$.


\renewcommand{\GraphNumber}{2}
\renewcommand{\GraphTitle}{$K_{3,3}$}
\renewcommand{\VertexCount}{6}
\renewcommand{\IntersectionArray}{\{3,2;1,3\}}
\renewcommand{\DateString}{2026.04.18 (orig. 2025.12.02)}

\section{\texorpdfstring{$K_{3,3}$}{K3,3}}
Throughout this section, we take $\Gamma$ to be the graph $K_{3,3}$. $\Gamma$ has 6 vertices and intersection array $\{3,2;1,3\}$.
\begin{center}
    \framebox[0.55\linewidth]{\includegraphics[width=0.55\linewidth]{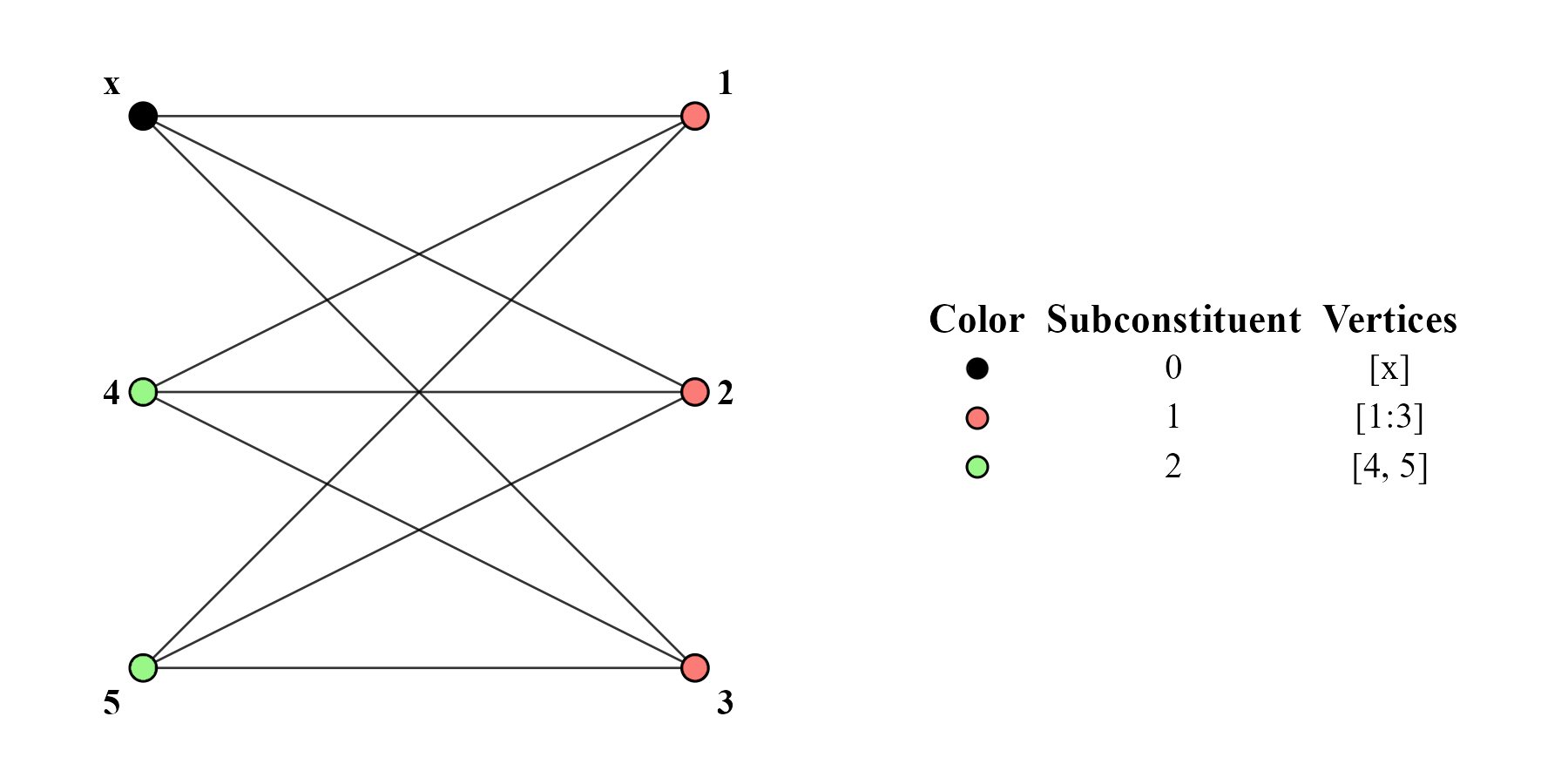}}
    \captionof{figure}{\textit{The graph $K_{3,3}$.
}}
\end{center}

The spectrum of the adjacency matrix $A$ is $3^10^4(-3)^1$.

We decompose the standard module $V$ of $\Gamma$ into an orthogonal direct sum of irreducible $T$-modules. Our decomposition has the form
\[ V = W_0 \oplus W_{1,a} \oplus W_{1,b} \oplus W_2. \]
For each irreducible $T$-module in this decomposition, we give the endpoint, the multiplicity, the dimension, the diameter, the shape, the action of $A$ upon an appropriate pure basis, and the isomorphism type.

\begin{center}
\begin{tabular}{c | c c c c c c c}
\toprule
\textbf{Irred. $T$-modules} & \textbf{Endpt.} & \textbf{Mult.} & \textbf{Dim.} & \textbf{Diam.} & \textbf{Shape} & $A$ \textbf{action} & \textbf{Iso. type} \\
\midrule
$W_0$ & 0 & 1 & 3 & 2 & $(1,1,1)$ & \eqref{eq:k33-I} & I \\
$W_{1,a}, W_{1,b}$ & 1 & 2 & 1 & 0 & $(1)$ & \eqref{eq:k33-II} & II \\
$W_{2}$ & 2 & 1 & 1 & 0 & $(1)$ & \eqref{eq:k33-II} & III \\
\bottomrule
\end{tabular}

\smallskip
\captionof{table}{\textit{Irreducible $T$-modules for $K_{3,3}$} \\ 
$(\dim T = 3^2+1^2+1^2=11)$}
\end{center}

For each irreducible $T$-module in the table above, we now give a pure basis and the matrix representing $A$ on that basis. 

\subsection{Endpoint 0}

We now describe the primary irreducible $T$-module $W_0$. The module $W_0$ has a basis $\{e_i^*\}_{i=0}^2$, where $e_i^*$ is from Definition \ref{def:primary}. With respect to this basis the matrix representing $A$ is
\begin{equation}
    A: \begin{bmatrix}
  0 & 3 & 0 \\
  1 & 0 & 2 \\
  0 & 3 & 0
\end{bmatrix}.\label{eq:k33-I}
\end{equation}
This matrix has eigenvalues $3, 0, -3$.

\subsection{Endpoint 1}

We now describe the Type II irreducible $T$-modules in our decomposition. For Type II, the multiplicity is 2 and the modules are $W_{1,a}$ and $W_{1,b}$. For each module, our basis has the form $\{\nu\}$, where the seed vector $\nu$ is given below.
\begin{center}
    \begin{tabular}{c | c}
    \toprule
       \textbf{Module}  & \textbf {Essential part of }$\nu$ \\
       \midrule
       $W_{1,a}$  & $\rowvector{1 & -1 & 0}$ \\[1.67pt]
       $W_{1,b}$  & $\rowvector{1 & 1 & -2}$ \\
    \bottomrule
    \end{tabular}
    \captionof{table}{\textit{The seed vector $\nu$ for each irreducible $T$-module of Type II.}}
\end{center}

With respect to each basis, the matrix representing $A$ is 
\begin{equation}
A: \begin{bmatrix}
  0
\end{bmatrix}.
\label{eq:k33-II}
\end{equation}

This matrix has eigenvalue $0$.

\subsection{Endpoint 2}

We now describe the Type III irreducible $T$-modules in our decomposition. For Type III, the multiplicity is 1 and the module is  $W_2$. For this module, our basis has the form $\{\nu\}$, where the seed vector $\nu$ is given below.
\begin{center}
    \begin{tabular}{c | c}
    \toprule
       \textbf{Module}  & \textbf {Essential part of }$\nu$ \\
       \midrule
       $W_{2}$  & $\rowvector{1 & -1}$ \\
    \bottomrule
    \end{tabular}
    \captionof{table}{\textit{The seed vector $\nu$ for the irreducible $T$-module of Type III.}}
\end{center}

With respect to this basis, the matrix representing $A$ is identical to \eqref{eq:k33-II}.


\renewcommand{\GraphNumber}{3}
\renewcommand{\GraphTitle}{Petersen Graph}
\renewcommand{\VertexCount}{10}
\renewcommand{\IntersectionArray}{\{3,2;1,1\}}
\renewcommand{\DateString}{2026.04.18 (orig. 2025.11.17)}

\section{Petersen Graph}
Throughout this section, we take $\Gamma$ to be the Petersen Graph. $\Gamma$ has 10 vertices and intersection array $\{3,2;1,1\}$.
\begin{center}
    \framebox[0.55\linewidth]{\includegraphics[width=0.55\linewidth]{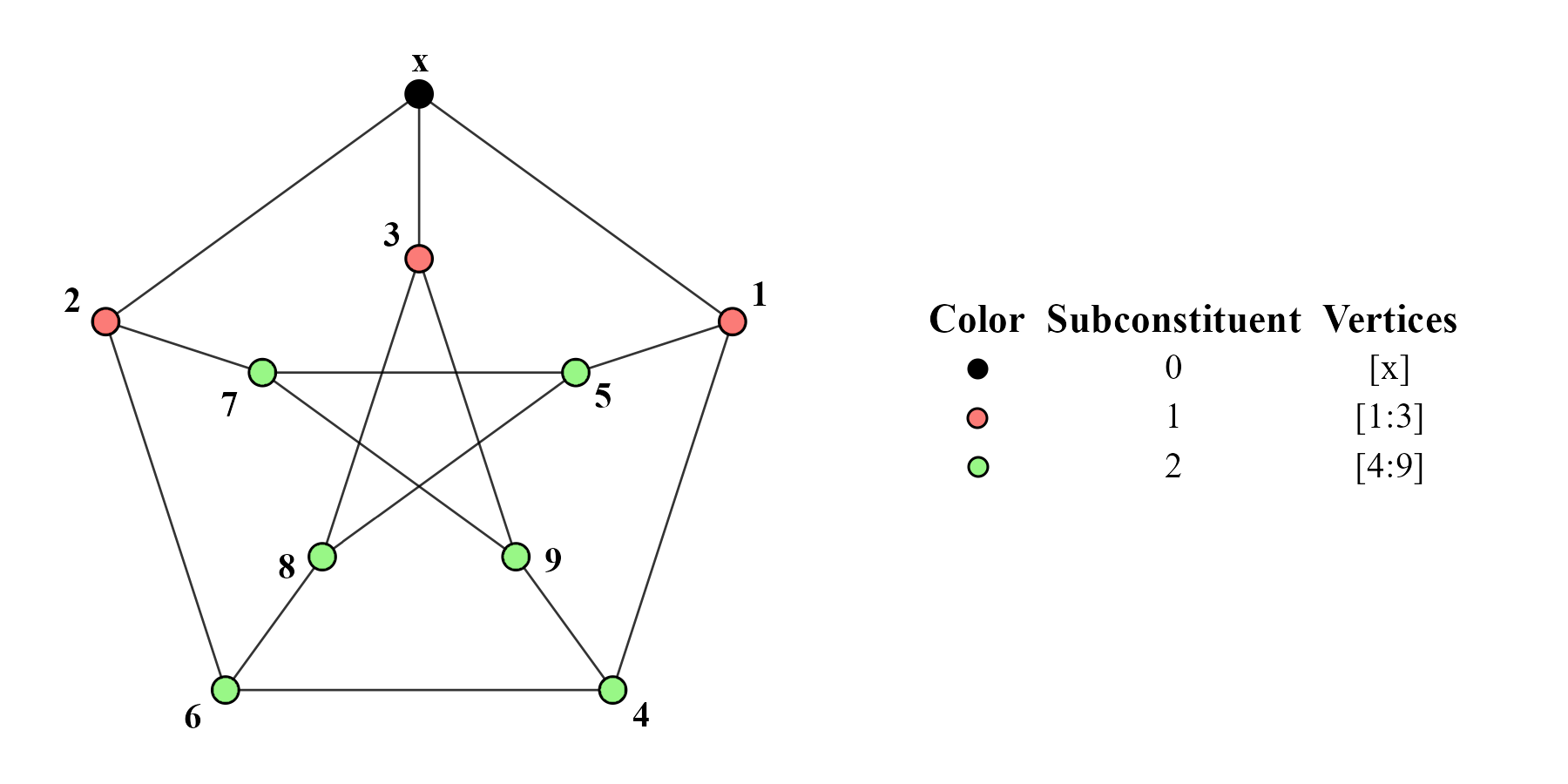}}
    \captionof{figure}{\textit{The Petersen Graph.
}}
\end{center}

The spectrum of the adjacency matrix $A$ is $3^11^5(-2)^4$.

We decompose the standard module $V$ of $\Gamma$ into an orthogonal direct sum of irreducible $T$-modules. Our decomposition has the form
\[ V = W_0 \oplus W_{1,a} \oplus W_{1,b} \oplus W_{2,a} \oplus W_{2,b} \oplus W_{2,c}. \]
For each irreducible $T$-module in this decomposition, we give the endpoint, the multiplicity, the dimension, the diameter, the shape, the action of $A$ upon an appropriate pure basis, and the isomorphism type.

\begin{center}
\begin{tabular}{c | c c c c c c c}
\toprule
\textbf{Irred. $T$-modules} & \textbf{Endpt.} & \textbf{Mult.} & \textbf{Dim.} & \textbf{Diam.} & \textbf{Shape} & $A$ \textbf{action} & \textbf{Iso. type} \\
\midrule
$W_0$ & 0 & 1 & 3 & 2 & $(1,1,1)$ & \eqref{eq:pet-I} & I \\
$W_{1,a}, W_{1,b}$ & 1 & 2 & 2 & 1 & $(1,1)$ & \eqref{eq:pet-II} & II \\
$W_{2,a},W_{2,b}$ & 2 & 2 & 1 & 0 & $(1)$ & \eqref{eq:pet-III} & III \\
$W_{2,c}$ & 2 & 1 & 1 & 0 & $(1)$ & \eqref{eq:pet-IV} & IV \\
\bottomrule
\end{tabular}

\smallskip
\captionof{table}{\textit{Irreducible $T$-modules for the Petersen Graph} \\ 
$(\dim T = 3^2+2^2+1^2+1^2=15)$}
\end{center}

For each irreducible $T$-module in the table above, we now give a pure basis and the matrix representing $A$ on that basis. 

\subsection{Endpoint 0}

We now describe the primary irreducible $T$-module $W_0$. The module $W_0$ has a basis $\{e_i^*\}_{i=0}^2$, where $e_i^*$ is from Definition \ref{def:primary}. With respect to this basis the matrix representing $A$ is
\begin{equation}
    A: \begin{bmatrix}
  0 & 3 & 0 \\
  1 & 0 & 2 \\
  0 & 1 & 2
\end{bmatrix}.
\label{eq:pet-I}
\end{equation}
This matrix has eigenvalues $3, 1, -2$.

\subsection{Endpoint 1}

We now describe the Type II irreducible $T$-modules in our decomposition. For Type II, the multiplicity is 2 and the modules are $W_{1,a}$ and $W_{1,b}$. For each module, our basis has the form $\{\nu, A\nu\}$, where the seed vector $\nu$ is given below.
\begin{center}
    \begin{tabular}{c | c}
    \toprule
       \textbf{Module}  & \textbf {Essential part of }$\nu$ \\
       \midrule
       $W_{1,a}$  & $\rowvector{1 & -1 & 0}$ \\[1.67pt]
       $W_{1,b}$  & $\rowvector{1 & 1 & -2}$ \\
    \bottomrule
    \end{tabular}
    \captionof{table}{\textit{The seed vector $\nu$ for each irreducible $T$-module of Type II. Note that $A\nu=E_2^*A\nu$.}}
\end{center}

With respect to each basis, the matrix representing $A$ is 
\begin{equation}
    A: \begin{bmatrix}
  0 & 2 \\
  1 & -1 \\
\end{bmatrix}.
\label{eq:pet-II}
\end{equation}

This matrix has eigenvalues $1, -2$.

\subsection{Endpoint 2}

We now describe the Type III irreducible $T$-modules in our decomposition. For Type III, the multiplicity is 2 and the modules are $W_{2,a}$ and $W_{2,b}$. For each module, our basis has the form $\{\nu\}$, where the seed vector $\nu$ is given below.
\begin{center}
    \begin{tabular}{c | c}
    \toprule
       \textbf{Module}  & \textbf {Essential part of }$\nu$ \\
       \midrule
       $W_{2,a}$  & $\rowvector{1&-1&1&-1&0&0}$ \\[1.67pt]
       $W_{2,b}$  & $\rowvector{1&-1&-1&1&-2&2}$ \\
    \bottomrule
    \end{tabular}
    \captionof{table}{\textit{The seed vector $\nu$ for each irreducible $T$-module of Type III.}}
\end{center}

With respect to each basis, the matrix representing $A$ is 
\begin{equation}
    A: \begin{bmatrix}
  1
\end{bmatrix}.
\label{eq:pet-III}
\end{equation}

This matrix has eigenvalue $1$.

We now describe the Type IV irreducible $T$-modules in our decomposition. For Type IV, the multiplicity is 1 and the module is  $W_{2,c}$. For this module, our basis has the form $\{\nu\}$, where the seed vector $\nu$ is given below.
\begin{center}
    \begin{tabular}{c | c}
    \toprule
       \textbf{Module}  & \textbf {Essential part of }$\nu$ \\
       \midrule
       $W_{2,c}$  & $\rowvector{1&-1&-1&1&1&-1}$ \\
    \bottomrule
    \end{tabular}
    \captionof{table}{\textit{The seed vector $\nu$ for the irreducible $T$-module of Type IV.}}
\end{center}

With respect to this basis, the matrix representing $A$ is
\begin{equation}
    A: \begin{bmatrix}
  -2
\end{bmatrix}.
\label{eq:pet-IV}
\end{equation}

This matrix has eigenvalue $-2$.


\renewcommand{\GraphNumber}{4}
\renewcommand{\GraphTitle}{3-cube}
\renewcommand{\VertexCount}{8}
\renewcommand{\IntersectionArray}{\{3,2,1;1,2,3\}}
\renewcommand{\DateString}{2026.04.18 (orig. 2025.12.02)}

\section{3-cube}
Throughout this section, we take $\Gamma$ to be the 3-cube. $\Gamma$ has 8 vertices and intersection array $\{3,2,1;1,2,3\}$.
\begin{center}
    \framebox[0.6\linewidth]{\includegraphics[width=0.6\linewidth]{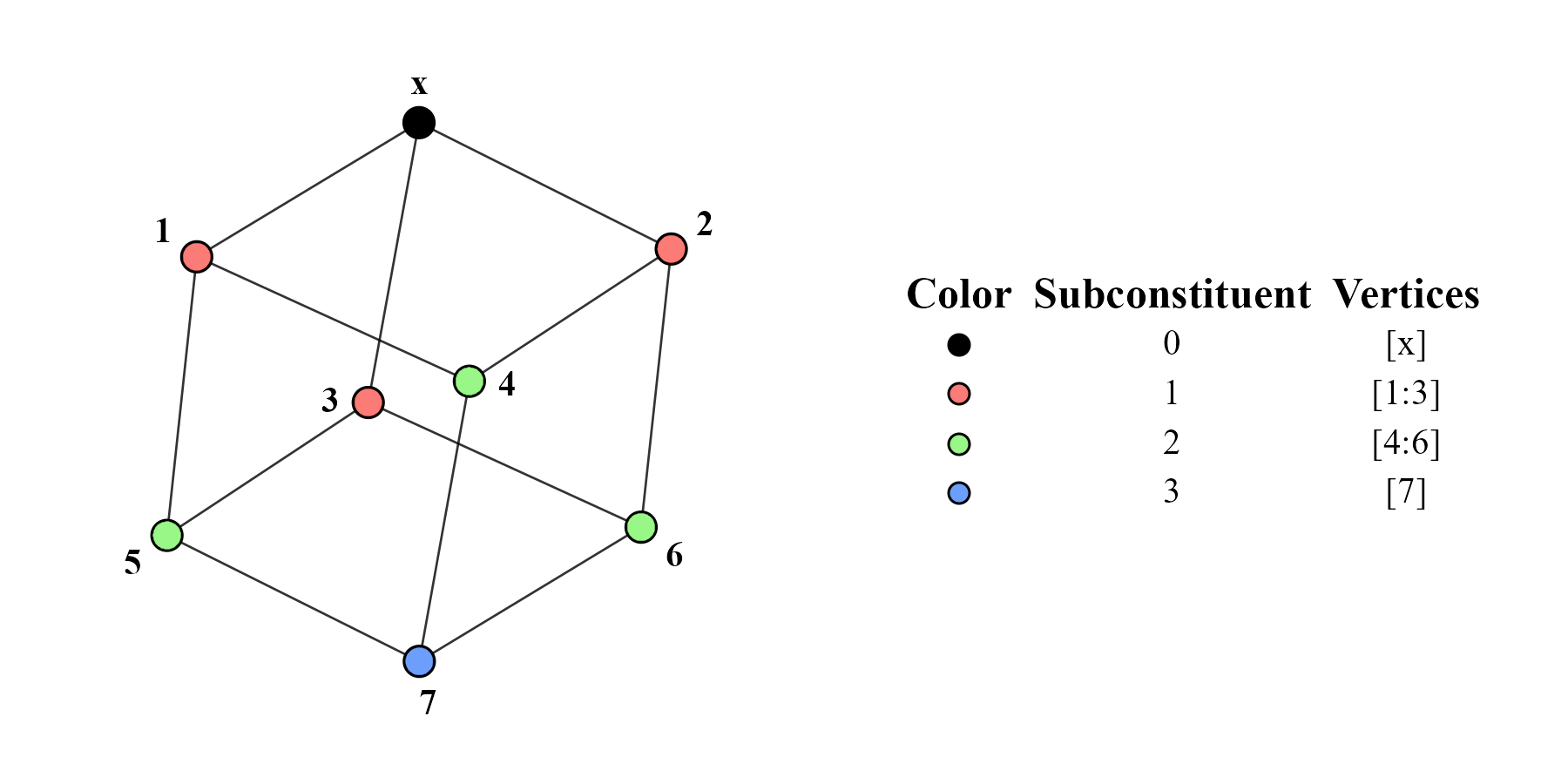}}
    \captionof{figure}{\textit{The $3$-cube.
}}
\end{center}

The spectrum of the adjacency matrix $A$ is $3^11^3(-1)^3(-3)^1$.

We decompose the standard module $V$ of $\Gamma$ into an orthogonal direct sum of irreducible $T$-modules. Our decomposition has the form
\[ V = W_0 \oplus W_{1,a} \oplus W_{1,b}. \]
For each irreducible $T$-module in this decomposition, we give the endpoint, the multiplicity, the dimension, the diameter, the shape, the action of $A$ upon an appropriate pure basis, and the isomorphism type.

\begin{center}
\begin{tabular}{c | c c c c c c c}
\toprule
\textbf{Irred. $T$-modules} & \textbf{Endpt.} & \textbf{Mult.} & \textbf{Dim.} & \textbf{Diam.} & \textbf{Shape} & $A$ \textbf{action} & \textbf{Iso. type} \\
\midrule
$W_0$ & 0 & 1 & 4 & 3 & $(1,1,1,1)$ & \eqref{eq:3cube-I} & I \\
$W_{1,a}, W_{1,b}$ & 1 & 2 & 2 & 1 & $(1,1)$ & \eqref{eq:3cube-II} & II \\
\bottomrule
\end{tabular}

\smallskip
\captionof{table}{\textit{Irreducible $T$-modules for the $3$-cube} \\ 
$(\dim T = 4^2+2^2=20)$}
\end{center}

For each irreducible $T$-module in the table above, we now give a pure basis and the matrix representing $A$ on that basis. 

\subsection{Endpoint 0}

We now describe the primary irreducible $T$-module $W_0$. The module $W_0$ has a basis $\{e_i^*\}_{i=0}^3$, where $e_i^*$ is from Definition \ref{def:primary}. With respect to this basis the matrix representing $A$ is
\begin{equation}
    A: \begin{bmatrix}
  0 & 3 & 0 & 0 \\
  1 & 0 & 2 & 0 \\
  0 & 2 & 0 & 1 \\
  0 & 0 & 3 & 0 \\
\end{bmatrix}.
\label{eq:3cube-I}
\end{equation}
This matrix has eigenvalues $3, 1, -1, -3$.

\subsection{Endpoint 1}

We now describe the Type II irreducible $T$-modules in our decomposition. For Type II, the multiplicity is 2 and the modules are $W_{1,a}$ and $W_{1,b}$. For each module, our basis has the form $\{\nu, A\nu\}$, where the seed vector $\nu$ is given below.
\begin{center}
    \begin{tabular}{c | c}
    \toprule
       \textbf{Module}  & \textbf {Essential part of }$\nu$ \\
       \midrule
       $W_{1,a}$  & $\rowvector{1 & -1 & 0}$ \\[1.67pt]
       $W_{1,b}$  & $\rowvector{1 & 1 & -2}$ \\
    \bottomrule
    \end{tabular}
    \captionof{table}{\textit{The seed vector $\nu$ for each irreducible $T$-module of Type II. Note that $A\nu=E_2^*A\nu$.}}
\end{center}

With respect to each basis, the matrix representing $A$ is 
\begin{equation}
    A: \begin{bmatrix}
  0 & 1 \\
  1 & 0 \\
\end{bmatrix}.
\label{eq:3cube-II}
\end{equation}

This matrix has eigenvalues $1, -1$.


\renewcommand{\GraphNumber}{5}
\renewcommand{\GraphTitle}{Heawood Graph}
\renewcommand{\VertexCount}{14}
\renewcommand{\IntersectionArray}{\{3,2,2;1,1,3\}}
\renewcommand{\DateString}{2026.04.11 (orig. 2025.11.09)}

\section{Heawood Graph}
Throughout this section, we take $\Gamma$ to be the Heawood Graph. The graph $\Gamma$ has 14 vertices and intersection array $\{3,2,2;1,1,3\}$.
\begin{center}
    \framebox[0.72\linewidth]{\includegraphics[width=0.72\linewidth]{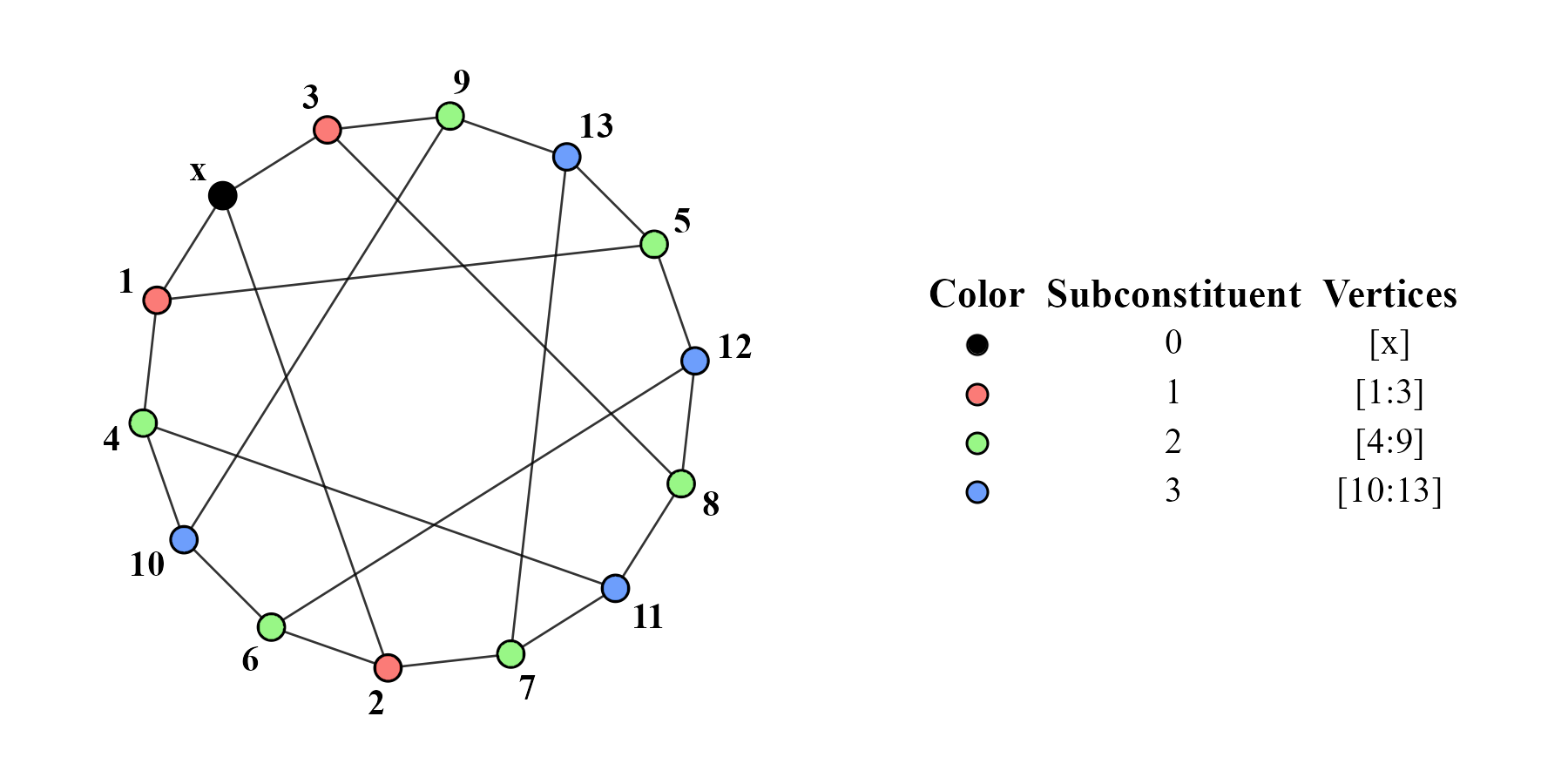}}
    \captionof{figure}{\textit{The Heawood Graph.
}}
\end{center}

The spectrum of the adjacency matrix $A$ is \( 3^1(\sqrt2)^6(-\sqrt2)^6(-3)^1 \).

We decompose the standard module $V$ of $\Gamma$ into an orthogonal direct sum of irreducible $T$-modules. Our decomposition has the form
\[ V = W_0 \oplus W_{1,a} \oplus W_{1,b} \oplus W_{2,a} \oplus W_{2,b} \oplus W_{2,c}. \]
For each irreducible $T$-module in this decomposition, we give the endpoint, the multiplicity, the dimension, the diameter, the shape, the action of $A$ upon an appropriate pure basis, and the isomorphism type.

\begin{center}
\begin{tabular}{c | c c c c c c c}
\toprule
\textbf{Irred. $T$-modules} & \textbf{Endpt.} & \textbf{Mult.} & \textbf{Dim.} & \textbf{Diam.} & \textbf{Shape} & $A$ \textbf{action} & \textbf{Iso. type} \\
\midrule
$W_0$ & 0 & 1 & 4 & 3 & $(1,1,1,1)$ & \eqref{eq:hea-I} & I \\
$W_{1,a}, W_{1,b}$ & 1 & 2 & 2 & 1 & $(1,1)$ & \eqref{eq:hea-II} & II \\
$W_{2,a}, W_{2,b}, W_{2,c}$ & 2 & 3 & 2 & 1 & $(1,1)$ & \eqref{eq:hea-II} & III \\
\bottomrule
\end{tabular}

\smallskip
\captionof{table}{\textit{Irreducible $T$-modules for the Heawood Graph} \\ 
$(\dim T = 4^2+2^2+2^2=24)$}
\end{center}

For each irreducible $T$-module in the table above, we now give a pure basis and the matrix representing $A$ on that basis. 

\subsection{Endpoint 0}

We now describe the primary irreducible $T$-module $W_0$. The module $W_0$ has a basis $\{e_i^*\}_{i=0}^3$, where $e_i^*$ is from Definition \ref{def:primary}. With respect to this basis the matrix representing $A$ is
\begin{equation}
    A: \begin{bmatrix}
  0 & 3 & 0 & 0 \\
  1 & 0 & 2 & 0 \\
  0 & 1 & 0 & 2 \\
  0 & 0 & 3 & 0 \\
\end{bmatrix}.
\label{eq:hea-I}
\end{equation}
This matrix has eigenvalues $3, \sqrt{2}, -\sqrt{2}, -3$.

\subsection{Endpoint 1}

We now describe the Type II irreducible $T$-modules in our decomposition. For Type II, the multiplicity is 2 and the modules are $W_{1,a}$ and $W_{1,b}$. For each module, our basis has the form $\{\nu, A\nu\}$, where the seed vector $\nu$ is given below.
\begin{center}
    \begin{tabular}{c | c}
    \toprule
       \textbf{Module}  & \textbf {Essential part of }$\nu$ \\
       \midrule
       $W_{1,a}$  & $\rowvector{1 & -1 & 0}$ \\[1.67pt]
       $W_{1,b}$  & $\rowvector{1 & 1 & -2}$ \\
    \bottomrule
    \end{tabular}
    \captionof{table}{\textit{The seed vector $\nu$ for each irreducible $T$-module of Type II. Note that $A\nu=E_2^*A\nu$.}}
\end{center}

With respect to each basis, the matrix representing $A$ is 
\begin{equation}
    A: \begin{bmatrix}
  0 & 2 \\
  1 & 0 \\
\end{bmatrix}.
\label{eq:hea-II}
\end{equation}

This matrix has eigenvalues $\sqrt{2}, -\sqrt{2}$.

\subsection{Endpoint 2}

We now describe the Type III irreducible $T$-modules in our decomposition. For Type III, the multiplicity is 3 and the modules are $W_{2,a}, W_{2,b}, W_{2,c}$. For each module, our basis has the form $\{\nu, A\nu\}$, where the seed vector $\nu$ is given below.
\begin{center}
    \begin{tabular}{c | c}
    \toprule
       \textbf{Module}  & \textbf {Essential part of }$\nu$ \\
       \midrule
       $W_{2,a}$  & $\rowvector{1 & -1 & 0 & 0 &0&0}$ \\[1.67pt]
       $W_{2,b}$  & $\rowvector{0 & 0 & 1 & -1 &0&0}$ \\[1.67pt]
       $W_{2,c}$  & $\rowvector{0 & 0 & 0 & 0 &1&-1}$ \\
    \bottomrule
    \end{tabular}
    \captionof{table}{\textit{The seed vector $\nu$ for each irreducible $T$-module of Type III. Note that $A\nu=E_3^*A\nu$.}}
\end{center}

With respect to each basis, the matrix representing $A$ is identical to \eqref{eq:hea-II}.


\renewcommand{\GraphNumber}{6}
\renewcommand{\GraphTitle}{Pappus Graph}
\renewcommand{\VertexCount}{18}
\renewcommand{\IntersectionArray}{\{3,2,2,1;1,1,2,3\}}
\renewcommand{\DateString}{2026.04.11 (orig. 2025.11.11)}

\section{Pappus Graph}
Throughout this section, we take $\Gamma$ to be the Pappus Graph. $\Gamma$ has 18 vertices and intersection array $\{3,2,2,1;1,1,2,3\}$.
\begin{center}
    \framebox[0.7\linewidth]{\includegraphics[width=0.7\linewidth]{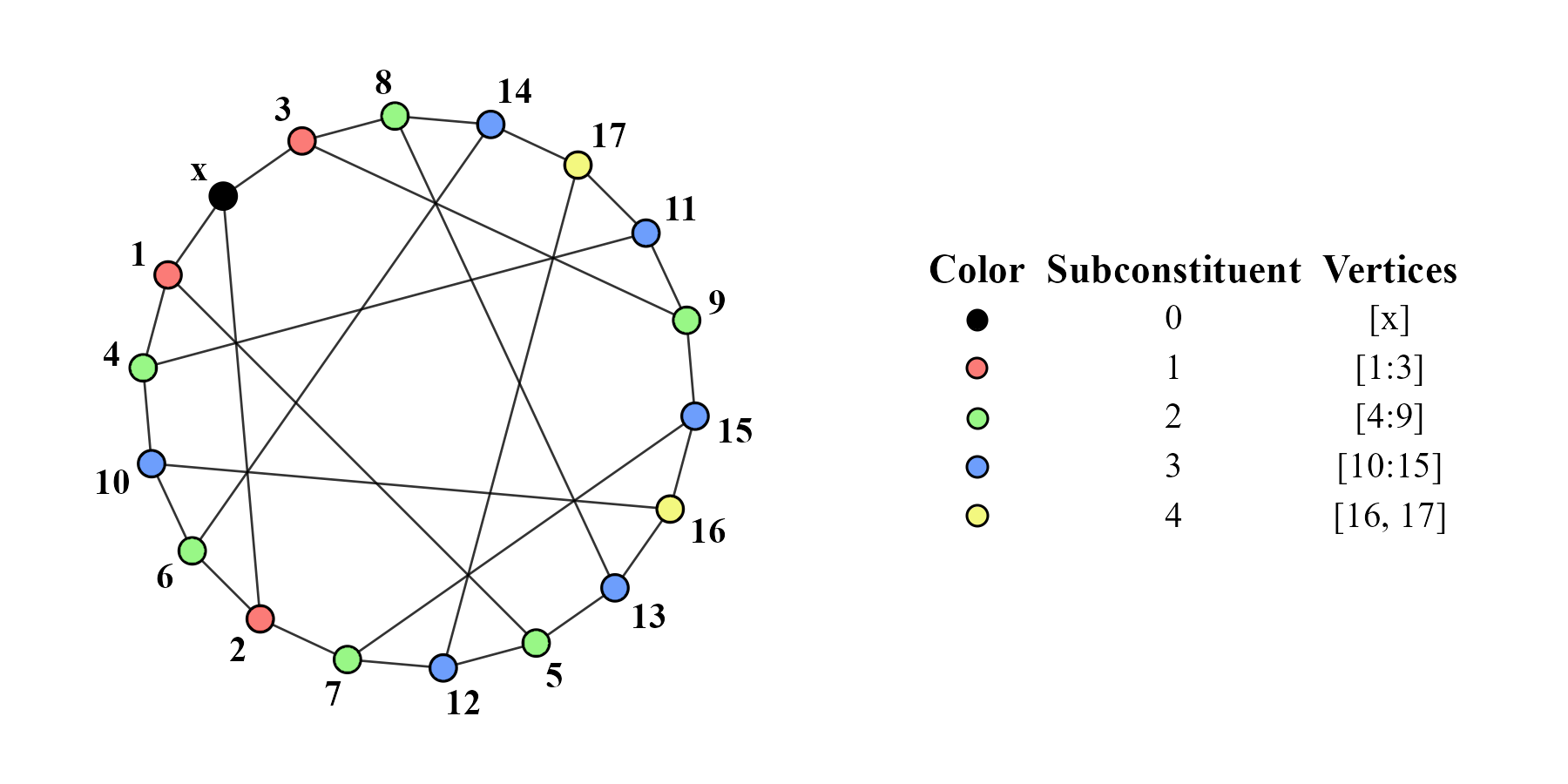}}
    \captionof{figure}{\textit{The Pappus Graph.
}}
\end{center}

The spectrum of the adjacency matrix $A$ is $ 3^1(\sqrt3)^60^4(-\sqrt3)^6(-3)^1 $.

We decompose the standard module $V$ of $\Gamma$ into an orthogonal direct sum of irreducible $T$-modules. Our decomposition has the form
\[ V = W_0 \oplus W_{1,a} \oplus W_{1,b} \oplus W_{2,a} \oplus W_{2,b} \oplus W_{2,c} \oplus W_3. \]
For each irreducible $T$-module in this decomposition, we give the endpoint, the multiplicity, the dimension, the diameter, the shape, the action of $A$ upon an appropriate pure basis, and the isomorphism type.

\begin{center}
\begin{tabular}{c | c c c c c c c}
\toprule
\textbf{Irred. $T$-modules} & \textbf{Endpt.} & \textbf{Mult.} & \textbf{Dim.} & \textbf{Diam.} & \textbf{Shape} & $A$ \textbf{action} & \textbf{Iso. type} \\
\midrule
$W_0$ & 0 & 1 & 5 & 4 & $(1,1,1,1,1)$ & \eqref{eq:pap-I} & I \\
$W_{1,a}, W_{1,b}$ & 1 & 2 & 3 & 2 & $(1,1,1)$ & \eqref{eq:pap-II} & II \\
$W_{2,a}$ & 2 & 1 & 1 & 0 & $(1)$ & \eqref{eq:pap-III} & III \\
$W_{2,b}, W_{2,c}$ & 2 & 2 & 2 & 1 & $(1,1)$ & \eqref{eq:pap-IV} & IV \\
$W_{3}$ & 3 & 1 & 2 & 1 & $(1,1)$ & \eqref{eq:pap-IV} & V \\
\bottomrule
\end{tabular}

\smallskip
\captionof{table}{\textit{Irreducible $T$-modules for the Pappus Graph} \\
$(\dim T = 5^2+3^2+1^2+2^2+2^2=43)$}
\end{center}

For each irreducible $T$-module in the table above, we now give a pure basis and the matrix representing $A$ on that basis.

\subsection{Endpoint 0}

We now describe the primary irreducible $T$-module $W_0$. The module $W_0$ has a basis $\{e_i^*\}_{i=0}^5$, where $e_i^*$ is from Definition \ref{def:primary}. With respect to this basis the matrix representing $A$ is
\begin{equation}
    A: \begin{bmatrix}
0 & 3 & 0 & 0 & 0 \\
1 & 0 & 2 & 0 & 0 \\
0 & 1 & 0 & 2 & 0 \\
0 & 0 & 2 & 0 & 1 \\
0 & 0 & 0 & 3 & 0 \\
\end{bmatrix}.
\label{eq:pap-I}
\end{equation}
This matrix has eigenvalues $3, \sqrt{3}, 0, -\sqrt{3}, -3$.

\subsection{Endpoint 1}

We now describe the Type II irreducible $T$-modules in our decomposition. For Type II, the multiplicity is 2 and the modules are $W_{1,a}$ and $W_{1,b}$. For each module, our basis has the form $\{\nu, A\nu, E_3^*A^2\nu\}$, where the seed vector $\nu$ is given below.
\begin{center}
    \begin{tabular}{c | c}
    \toprule
       \textbf{Module}  & \textbf {Essential part of }$\nu$ \\
       \midrule
       $W_{1,a}$  & $\rowvector{1 & -1 & 0}$ \\[1.67pt]
       $W_{1,b}$  & $\rowvector{1 & 1 & -2}$ \\
    \bottomrule
    \end{tabular}
    \captionof{table}{\textit{The seed vector $\nu$ for each irreducible $T$-module of Type II. Note that $A\nu=E_2^*A\nu$.}}
\end{center}

With respect to each basis, the matrix representing $A$ is 
\begin{equation}
    A: \begin{bmatrix}
  0 & 2 & 0 \\
  1 & 0 & 1 \\
  0 & 1 & 0 \\
\end{bmatrix}.
\label{eq:pap-II}
\end{equation}

This matrix has eigenvalues $\sqrt{3}, 0, -\sqrt{3}$.

\subsection{Endpoint 2}

We now describe the Type III irreducible $T$-modules in our decomposition. For Type III, the multiplicity is 1 and the module is  $W_{2,a}$. For this module, our basis has the form $\{\nu\}$, where the seed vector $\nu$ is given below.
\begin{center}
    \begin{tabular}{c | c}
    \toprule
       \textbf{Module}  & \textbf {Essential part of }$\nu$ \\
       \midrule
       $W_{2,a}$  & $\rowvector{-1 & 1 & 1 & -1 &-1 &1}$ \\
    \bottomrule
    \end{tabular}
    \captionof{table}{\textit{The seed vector $\nu$ for the irreducible $T$-module of Type III.}}
\end{center}

With respect to this basis, the matrix representing $A$ is
\begin{equation}
    A: \begin{bmatrix}
  0
\end{bmatrix}.
\label{eq:pap-III}
\end{equation}

This matrix has eigenvalue $0$.

We now describe the Type IV irreducible $T$-modules in our decomposition. For Type IV, the multiplicity is 2 and the modules are $W_{2,b}$ and $W_{2,c}$. For each module, our basis has the form $\{\nu, A\nu\}$, where the seed vector $\nu$ is given below.
\begin{center}
    \begin{tabular}{c | c}
    \toprule
       \textbf{Module}  & \textbf {Essential part of }$\nu$ \\
       \midrule
       $W_{2,b}$  & $\rowvector{1 & -1 & 0 & 0& -1&1}$ \\[1.67pt]
       $W_{2,c}$  & $\rowvector{1 & -1 & 2 & -2& 1&-1}$ \\
    \bottomrule
    \end{tabular}
    \captionof{table}{\textit{The seed vector $\nu$ for each irreducible $T$-module of Type IV. Note that $A\nu=E_3^*A\nu$.}}
\end{center}

With respect to each basis, the matrix representing $A$ is 
\begin{equation}
    A: \begin{bmatrix}
  0 & 3 \\
  1 & 0 
\end{bmatrix}.
\label{eq:pap-IV}
\end{equation}

This matrix has eigenvalues $\sqrt{3}, -\sqrt{3}$.

\subsection{Endpoint 3}

We now describe the Type V irreducible $T$-modules in our decomposition. For Type V, the multiplicity is 1 and the module is  $W_3$. For this module, our basis has the form $\{\nu, A\nu\}$, where the seed vector $\nu$ is given below.
\begin{center}
    \begin{tabular}{c | c}
    \toprule
       \textbf{Module}  & \textbf {Essential part of }$\nu$ \\
       \midrule
       $W_{3}$  & $\rowvector{1 & -1 & -1 & 1& -1&1}$ \\
    \bottomrule
    \end{tabular}
    \captionof{table}{\textit{The seed vector $\nu$ for the irreducible $T$-module of Type V. Note that $A\nu=E_4^*A\nu$.}}
\end{center}

With respect to this basis, the matrix representing $A$ is identical to \eqref{eq:pap-IV}.


\renewcommand{\GraphNumber}{7}
\renewcommand{\GraphTitle}{Coxeter Graph}
\renewcommand{\VertexCount}{28}
\renewcommand{\IntersectionArray}{\{3,2,2,1;1,1,1,2\}}
\renewcommand{\DateString}{2026.04.18 (orig. 2026.02.01)}

\section{Coxeter Graph}
Throughout this section, we take $\Gamma$ to be the Coxeter Graph. $\Gamma$ has 28 vertices and intersection array $\{3,2,2,1;1,1,1,2\}$.
\begin{center}
    \framebox[0.7\linewidth]{\includegraphics[width=0.75\linewidth]{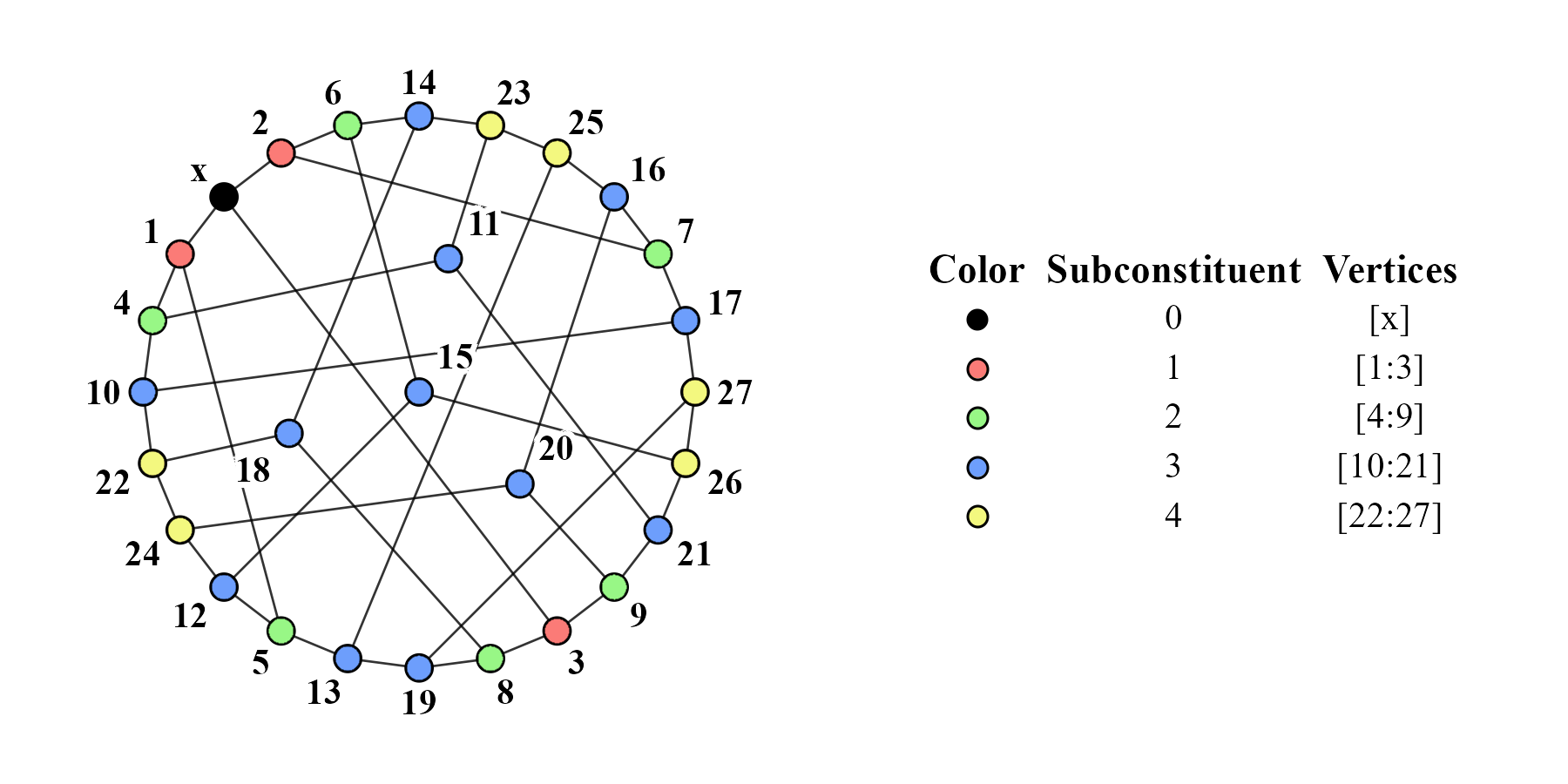}}
    \captionof{figure}{\textit{The Coxeter Graph.
}}
\end{center}

The spectrum of the adjacency matrix $A$ is $ 3^12^8(-1+\sqrt2)^6(-1)^7(-1-\sqrt2)^6 $.

We decompose the standard module $V$ of $\Gamma$ into an orthogonal direct sum of irreducible $T$-modules. Our decomposition has the form
\[ V = W_0 \oplus W_{1,a} \oplus W_{1,b} \oplus W_{2,a} \oplus W_{2,b} \oplus W_{2,c} \oplus W_{3,a} \oplus W_{3,b}. \]
For each irreducible $T$-module in this decomposition, we give the endpoint, the multiplicity, the dimension, the diameter, the shape, the action of $A$ upon an appropriate pure basis, and the isomorphism type.

\begin{center}
\begin{tabular}{c | c c c c c c c}
\toprule
\textbf{Irred. $T$-modules} & \textbf{Endpt.} & \textbf{Mult.} & \textbf{Dim.} & \textbf{Diam.} & \textbf{Shape} & $A$ \textbf{action} & \textbf{Iso. type} \\
\midrule
$W_0$ & 0 & 1 & 5 & 4 & $(1,1,1,1,1)$ & \eqref{eq:cox-I} & I \\
$W_{1,a}, W_{1,b}$ & 1 & 2 & 5 & 3 & $(1,1,2,1)$ & \eqref{eq:cox-II} & II \\
$W_{2,a}$ & 2 & 1 & 2 & 1 & $(1,1)$ & \eqref{eq:cox-III} & III \\
$W_{2,b}, W_{2,c}$ & 2 & 2 & 4 & 2 & $(1,2,1)$ & \eqref{eq:cox-IV} & IV \\
$W_{3,a}$ & 3 & 1 & 1 & 0 & $(1)$ & \eqref{eq:cox-V} & V \\
$W_{3,b}$ & 3 & 1 & 2 & 1 & $(1,1)$ & \eqref{eq:cox-VI} & VI \\
\bottomrule
\end{tabular}

\smallskip
\captionof{table}{\textit{Irreducible $T$-modules for the Coxeter Graph} \\
$(\dim T = 5^2 + 5^2+2^2+4^2+1^2+2^2=75)$}
\end{center}

For each irreducible $T$-module in the table above, we now give a pure basis and the matrix representing $A$ on that basis. 

\subsection{Endpoint 0}

We now describe the primary irreducible $T$-module $W_0$. The module $W_0$ has a basis $\{e_i^*\}_{i=0}^4$, where $e_i^*$ is from Definition \ref{def:primary}. With respect to this basis the matrix representing $A$ is
\begin{equation}
    A: \begin{bmatrix}
0 & 3 & 0 & 0 & 0 \\
1 & 0 & 2 & 0 & 0 \\
0 & 1 & 0 & 2 & 0 \\
0 & 0 & 1 & 1 & 1 \\
0 & 0 & 0 & 2 & 1 \\
\end{bmatrix}.
\label{eq:cox-I}
\end{equation}
This matrix has eigenvalues $3, 2, -1+\sqrt{2}, -1, -1-\sqrt{2}$.

\subsection{Endpoint 1}

We now describe the Type II irreducible $T$-modules in our decomposition. For Type II, the multiplicity is 2 and the modules are $W_{1,a}$ and $W_{1,b}$. For each module, our basis has the form $\{\nu, A\nu, E_3^*A^2\nu, E_3^*A^3\nu, E_4^*A^3\nu\}$, where the seed vector $\nu$ is given below.
\begin{center}
    \begin{tabular}{c | c}
    \toprule
       \textbf{Module}  & \textbf {Essential part of }$\nu$ \\
       \midrule
       $W_{1,a}$  & $\rowvector{1 & -1 & 0}$ \\[1.67pt]
       $W_{1,b}$  & $\rowvector{1 & 1 & -2}$ \\
    \bottomrule
    \end{tabular}
    \captionof{table}{\textit{The seed vector $\nu$ for each irreducible $T$-module of Type II. Note that $A\nu=E_2^*A\nu$.}}
\end{center}

With respect to each basis, the matrix representing $A$ is 
\begin{equation}
    A: \begin{bmatrix}
  0 & 2 & 0 & 0 & 0 \\
  1 & 0 & 2 & -1 & 0 \\
  0 & 1 & 0 & 1 & 0 \\
  0 & 0 & 1 & 0 & -1 \\
  0 & 0 & 1 & -2 & 1 \\
\end{bmatrix}.
\label{eq:cox-II}
\end{equation}

This matrix has eigenvalues $2,2, -1+\sqrt{2}, -1, -1-\sqrt{2}$.

\subsection{Endpoint 2}

We now describe the Type III irreducible $T$-modules in our decomposition. For Type III, the multiplicity is 1 and the module is  $W_{2,a}$. For this module, our basis has the form $\{\nu, A\nu\}$, where the seed vector $\nu$ is given below.
\begin{center}
    \begin{tabular}{c | c}
    \toprule
       \textbf{Module}  & \textbf {Essential part of }$\nu$ \\
       \midrule
       $W_{2,a}$  & $\rowvector{1 & -1 & -1 & 1 & -1 & 1}$ \\
    \bottomrule
    \end{tabular}
    \captionof{table}{\textit{The seed vector $\nu$ for the irreducible $T$-module of Type III. Note that $A\nu=E_3^*A\nu$.}}
\end{center}

With respect to this basis, the matrix representing $A$ is
\begin{equation}
    A: \begin{bmatrix}
  0 & 2 \\
  1 & 1
\end{bmatrix}.
\label{eq:cox-III}
\end{equation}

This matrix has eigenvalues $2, -1$.

We now describe the Type IV irreducible $T$-modules in our decomposition. For Type IV, the multiplicity is 2 and the modules are $W_{2,b}$ and $W_{2,c}$. For each module, our basis has the form $\{\nu, A\nu, E_3^*A^2\nu,E_4^*A^2\nu\}$, where the seed vector $\nu$ is given below.
\begin{center}
    \begin{tabular}{c | c}
    \toprule
       \textbf{Module}  & \textbf {Essential part of }$\nu$ \\
       \midrule
       $W_{2,b}$  & $\rowvector{1 & -1 & 0 & 0 & 1 & -1}$ \\[1.67pt]
       $W_{2,c}$  & $\rowvector{1 & -1 & 2 & -2 & -1 & 1}$ \\
    \bottomrule
    \end{tabular}
    \captionof{table}{\textit{The seed vector $\nu$ for each irreducible $T$-module of Type IV. Note that $A\nu=E_3^*A\nu$.}}
\end{center}

With respect to each basis, the matrix representing $A$ is 
\begin{equation}
    A: \begin{bmatrix}
  0 & 2 & -1 & 0 \\
  1 & 0 & 1 & 2 \\
  0 & 1 & 0 & 1 \\
  0 & 1 & 0 & -1 \\
\end{bmatrix}.
\label{eq:cox-IV}
\end{equation}

This matrix has eigenvalues $2, -1 + \sqrt2, -1, -1-\sqrt2$.

\subsection{Endpoint 3}

We now describe the Type V irreducible $T$-modules in our decomposition. For Type V, the multiplicity is 1 and the module is  $W_{3,a}$. For this module, our basis has the form $\{\nu\}$, where the seed vector $\nu$ is given below.
\begin{center}
    \begin{tabular}{c | c}
    \toprule
       \textbf{Module}  & \textbf {Essential part of }$\nu$ \\
       \midrule
       $W_{3,a}$  & $\rowvector{-1 & 1 & -1 & 1 & -1 & 1 & -1 & 1 & 1 & -1 & 1 & -1}$ \\
    \bottomrule
    \end{tabular}
    \captionof{table}{\textit{The seed vector $\nu$ for the irreducible $T$-module of Type V.}}
\end{center}

With respect to this basis, the matrix representing $A$ is
\begin{equation}
    A: \begin{bmatrix}
  -1
\end{bmatrix}.
\label{eq:cox-V}
\end{equation}

This matrix has eigenvalue $-1$.

We now describe the Type VI irreducible $T$-modules in our decomposition. For Type VI, the multiplicity is 1 and the module is  $W_{3,b}$. For this module, our basis has the form $\{\nu, E_4^*A\nu\}$, where the seed vector $\nu$ is given below.
\begin{center}
    \begin{tabular}{c | c}
    \toprule
       \textbf{Module}  & \textbf {Essential part of }$\nu$ \\
       \midrule
       $W_{3,b}$  & $\rowvector{-1 & 1 & 1 & -1 & 1 & -1 & -1 & 1 & -1 & 1 & 1 & -1}$ \\
    \bottomrule
    \end{tabular}
    \captionof{table}{\textit{The seed vector $\nu$ for the irreducible $T$-module of Type VI.}}
\end{center}

With respect to this basis, the matrix representing $A$ is
\begin{equation}
    A: \begin{bmatrix}
  -1 & 2\\
  1 & -1
\end{bmatrix}.
\label{eq:cox-VI}
\end{equation}

This matrix has eigenvalues $-1+\sqrt{2}, -1-\sqrt{2}$.


\renewcommand{\GraphNumber}{8}
\renewcommand{\GraphTitle}{Tutte's 8-cage}
\renewcommand{\VertexCount}{30}
\renewcommand{\IntersectionArray}{\{3,2,2,2;1,1,1,3\}}
\renewcommand{\DateString}{2026.04.11 (orig. 2026.01.21)}

\section{Tutte's 8-cage}
Throughout this section, we take $\Gamma$ to be Tutte's 8-cage. $\Gamma$ has 30 vertices and intersection array $\{3,2,2,2;1,1,1,3\}$.
\begin{center}
    \framebox[0.77\linewidth]{\includegraphics[width=0.8\linewidth]{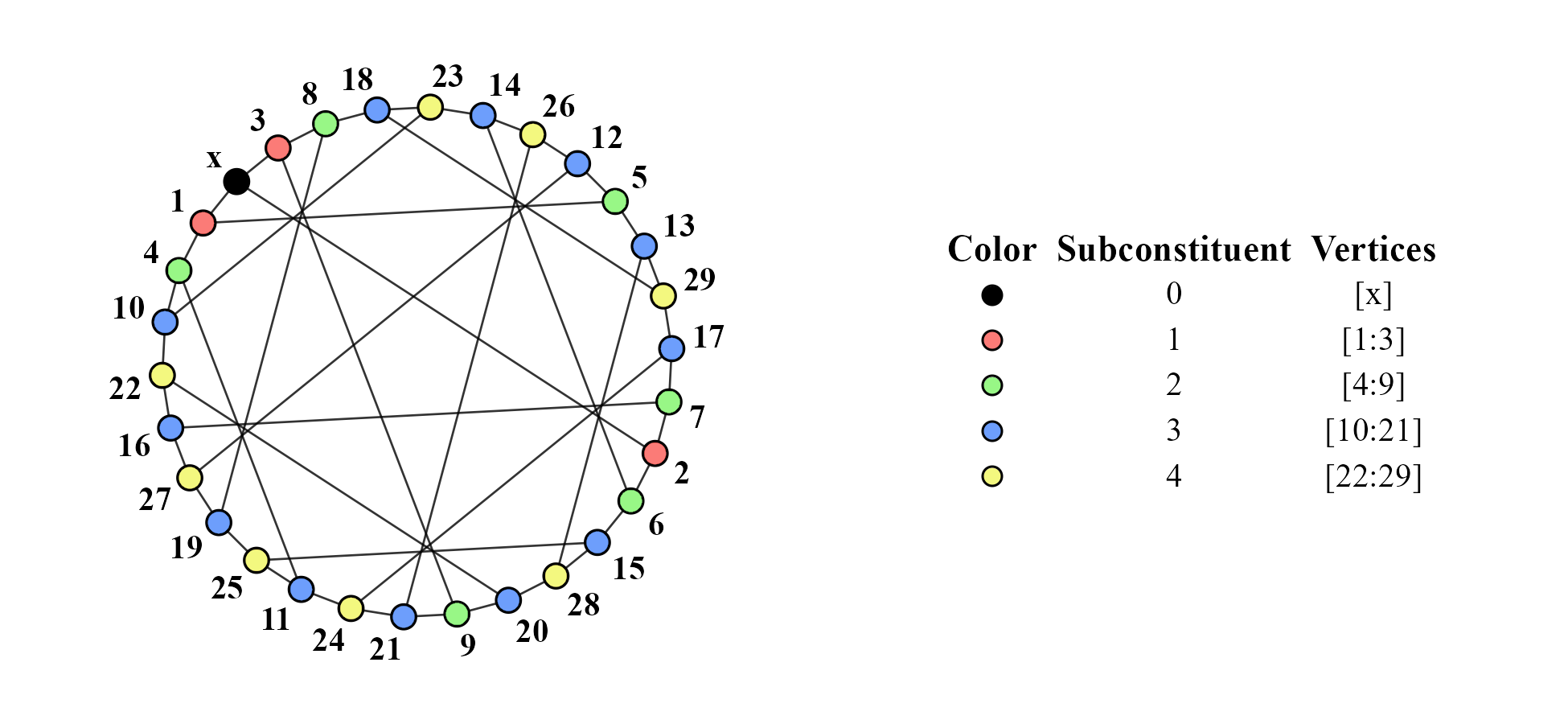}}
    \captionof{figure}{\textit{Tutte's 8-cage.
}}
\end{center}

The spectrum of the adjacency matrix $A$ is $3^12^90^{10}(-2)^9(-3)^1$.

We decompose the standard module $V$ of $\Gamma$ into an orthogonal direct sum of irreducible $T$-modules. Our decomposition has the form
\[ V = W_0 \oplus W_{1,a} \oplus W_{1,b} \oplus W_{2,a} \oplus W_{2,b} \oplus W_{2,c} \oplus W_{3,a} \oplus W_{3,b} \oplus W_{3,c} \oplus W_{3,d} \oplus W_{3,e} \oplus W_{3,f} \oplus W_4. \]
For each irreducible $T$-module in this decomposition, we give the endpoint, the multiplicity, the dimension, the diameter, the shape, the action of $A$ upon an appropriate pure basis, and the isomorphism type.

\begin{center}
\begin{tabular}{c | c c c c c c c}
\toprule
\textbf{Irred. $T$-modules} & \textbf{Endpt.} & \textbf{Mult.} & \textbf{Dim.} & \textbf{Diam.} & \textbf{Shape} & $A$ \textbf{action} & \textbf{Iso. type} \\
\midrule
$W_0$ & 0 & 1 & 5 & 4 & $(1,1,1,1,1)$ & \eqref{eq:t8-I} & I \\
$W_{1,a}, W_{1,b}$ & 1 & 2 & 3 & 2 & $(1,1,1)$ & \eqref{eq:t8-II} & II \\
$W_{2,a}, W_{2,b}, W_{2,c}$ & 2 & 3 & 3 & 2 & $(1,1,1)$ & \eqref{eq:t8-II} & III \\
$W_{3,a}, W_{3,b}, W_{3,c}$ & 3 & 3 & 1 & 0 & $(1)$ & \eqref{eq:t8-IV} & IV \\
$W_{3,d}, W_{3,e}, W_{3,f}$ & 3 & 3 & 2 & 1 & $(1,1)$ & \eqref{eq:t8-V} & V \\
$W_{4}$ & 4 & 1 & 1 & 0 & $(1)$ & \eqref{eq:t8-IV} & VI \\
\bottomrule
\end{tabular}

\smallskip
\captionof{table}{\textit{Irreducible $T$-modules of Tutte's 8-cage} \\
$(\dim T = 5^2+3^2+3^2+1^2+2^2+1^2=49)$}
\end{center}

For each irreducible $T$-module in the table above, we now give a pure basis and the matrix representing $A$ on that basis.

\subsection{Endpoint 0}

We now describe the primary irreducible $T$-module $W_0$. The module $W_0$ has a basis $\{e_i^*\}_{i=0}^4$, where $e_i^*$ is from Definition \ref{def:primary}. With respect to this basis the matrix representing $A$ is
\begin{equation}
    A: \begin{bmatrix}
0 & 3 & 0 & 0 & 0 \\
1 & 0 & 2 & 0 & 0 \\
0 & 1 & 0 & 2 & 0 \\
0 & 0 & 1 & 0 & 2 \\
0 & 0 & 0 & 3 & 0 \\
\end{bmatrix}.
\label{eq:t8-I}
\end{equation}
This matrix has eigenvalues $3, 2, 0, -2, -3$.

\subsection{Endpoint 1}

We now describe the Type II irreducible $T$-modules in our decomposition. For Type II, the multiplicity is 2 and the modules are $W_{1,a}$ and $W_{1,b}$. For each module, our basis has the form $\{\nu, A\nu, E_3^*A^2\nu\}$, where the seed vector $\nu$ is given below.
\begin{center}
    \begin{tabular}{c | c}
    \toprule
       \textbf{Module}  & \textbf {Essential part of }$\nu$ \\
       \midrule
       $W_{1,a}$  & $\rowvector{1 & -1 & 0}$ \\[1.67pt]
       $W_{1,b}$  & $\rowvector{1 & 1 & -2}$ \\
    \bottomrule
    \end{tabular}
    \captionof{table}{\textit{The seed vector $\nu$ for each irreducible $T$-module of Type II. Note that $A\nu=E_2^*A\nu$.}}
\end{center}

With respect to each basis, the matrix representing $A$ is 
\begin{equation}
    A: \begin{bmatrix}
  0 & 2 & 0 \\
  1 & 0 & 2 \\
  0 & 1 & 0 \\
\end{bmatrix}.
\label{eq:t8-II}
\end{equation}

This matrix has eigenvalues $2, 0, -2$.

\subsection{Endpoint 2}

We now describe the Type III irreducible $T$-modules in our decomposition. For Type III, the multiplicity is 3 and the modules are $W_{2,a}, W_{2,b}, W_{2,c}$. For each module, our basis has the form $\{\nu, A\nu, E_4^*A^2\nu\}$, where the seed vector $\nu$ is given below.
\begin{center}
    \begin{tabular}{c | c}
    \toprule
       \textbf{Module}  & \textbf {Essential part of }$\nu$ \\
       \midrule
       $W_{2,a}$  & $\rowvector{1 & -1 & 0 & 0 & 0 & 0}$ \\[1.67pt]
       $W_{2,b}$  & $\rowvector{0 & 0 & 1 & -1 & 0 & 0}$ \\[1.67pt]
       $W_{2,c}$  & $\rowvector{0 & 0 & 0 & 0 & 1 & -1}$ \\
    \bottomrule
   \end{tabular}
    \captionof{table}{\textit{The seed vector $\nu$ for each irreducible $T$-module of Type III. Note that $A\nu = E_3^*A\nu$.}}
\end{center}

With respect to each basis, the matrix representing $A$ is identical to \eqref{eq:t8-II}.

\subsection{Endpoint 3}

We now describe the Type IV irreducible $T$-modules in our decomposition. For Type IV, the multiplicity is 3 and the modules are $W_{3,a} ,W_{3,b}, W_{3,c}$. For each module, our basis has the form $\{\nu\}$, where the seed vector $\nu$ is given below.
\begin{center}
    \begin{tabular}{c | c}
    \toprule
       \textbf{Module}  & \textbf {Essential part of }$\nu$ \\
       \midrule
       $W_{3,a}$  & $\rowvector{2 & -2&0&0&-1&1&-1&1&-1&1&-1&1}$ \\[1.67pt]
       $W_{3,b}$  & $\rowvector{0&0&2&-2&-1&1&-1&1&1&-1&1&-1}$ \\[1.67pt]
       $W_{3,c}$  & $\rowvector{0&0&0&0&-1&1&1&-1&1&-1&-1&1}$ \\
    \bottomrule
    \end{tabular}
    \captionof{table}{\textit{The seed vector $\nu$ for each irreducible $T$-module of Type IV.}}
\end{center}

With respect to each basis, the matrix representing $A$ is
\begin{equation}
    A: \begin{bmatrix}
  0
\end{bmatrix}. \label{eq:t8-IV}
\end{equation}

This matrix has eigenvalue $0$.

We now describe the Type V irreducible $T$-modules in our decomposition. For Type V, the multiplicity is 3 and the modules are $W_{3,d},W_{3,e}, W_{3,f}$. For each module, our basis has the form $\{\nu, A\nu\}$, where the seed vector $\nu$ is given below.
\begin{center}
    \begin{tabular}{c | c}
    \toprule
       \textbf{Module}  & \textbf {Essential part of }$\nu$ \\
       \midrule
       $W_{3,d}$  & $\rowvector{-1&1&-1&1&-1&1&-1&1&0&0&0&0}$ \\[1.67pt]
       $W_{3,e}$  & $\rowvector{-1&1&1&-1&-1&1&1&-1&-2&2&0&0}$ \\[1.67pt]
       $W_{3,f}$  & $\rowvector{-1&1&1&-1&1&-1&-1&1&0&0&-2&2}$ \\
    \bottomrule
    \end{tabular}
    \captionof{table}{\textit{The seed vector $\nu$ for each irreducible $T$-module of Type V. Note that $A\nu = E_4^*A\nu$.}}
\end{center}

With respect to each basis, the matrix representing $A$ is
\begin{equation}
    A: \begin{bmatrix}
  0 & 4 \\
  1 & 0
\end{bmatrix}.
\label{eq:t8-V}
\end{equation}

This matrix has eigenvalues $2, -2$.

\subsection{Endpoint 4}

We now describe the Type VI irreducible $T$-modules in our decomposition. For Type VI, the multiplicity is 1 and the module is  $W_4$. For this module, our basis has the form $\{\nu\}$, where the seed vector $\nu$ is given below.
\begin{center}
    \begin{tabular}{c | c}
    \toprule
       \textbf{Module}  & \textbf {Essential part of }$\nu$ \\
       \midrule
       $W_{4}$  & $\rowvector{1&-1&-1&1&1&-1&-1&1}$ \\
    \bottomrule
    \end{tabular}
    \captionof{table}{\textit{The seed vector $\nu$ for the irreducible $T$-module of Type VI.}}
\end{center}

With respect to this basis, the matrix representing $A$ is identical to \eqref{eq:t8-IV}.


\renewcommand{\GraphNumber}{9}
\renewcommand{\GraphTitle}{Dodecahedron}
\renewcommand{\VertexCount}{20}
\renewcommand{\IntersectionArray}{\{3,2,1,1,1;1,1,1,2,3\}}
\renewcommand{\DateString}{2026.04.12 (orig. 2025.11.20)}

\section{Dodecahedron}
Throughout this section, we take $\Gamma$ to be the Dodecahedron. $\Gamma$ has 20 vertices and intersection array $\{3,2,1,1,1;1,1,1,2,3\}$.
\begin{center}
    \framebox[0.77\linewidth]{\includegraphics[width=0.8\linewidth]{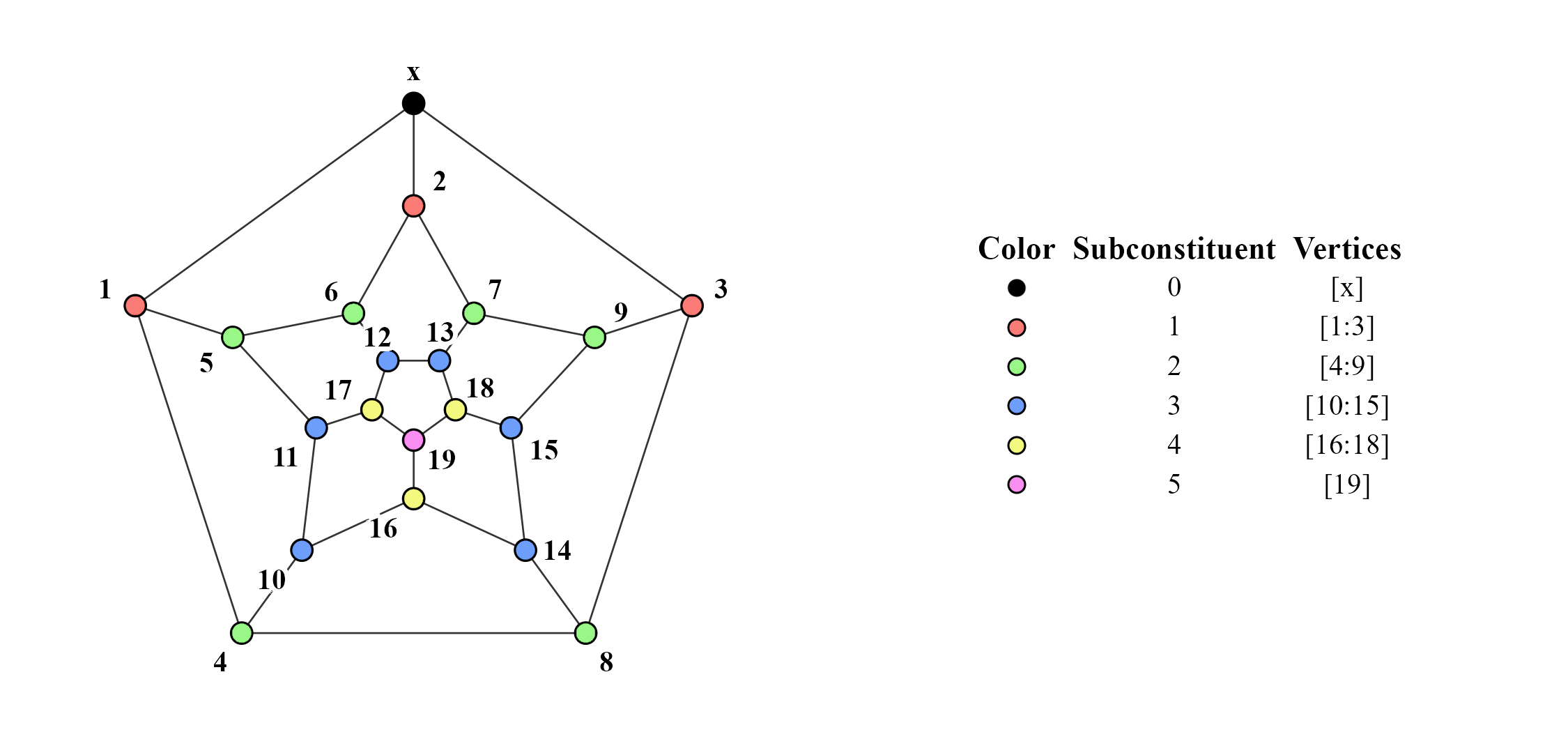}}
    \captionof{figure}{\textit{The Dodecahedron.
}}
    \end{center}

The spectrum of the adjacency matrix $A$ is $ 3^1(\sqrt5)^31^50^4(-2)^4(-\sqrt5)^3 $.

We decompose the standard module $V$ of $\Gamma$ into an orthogonal direct sum of irreducible $T$-modules. Our decomposition has the form
\[ V = W_0 \oplus W_{1,a} \oplus W_{1,b} \oplus W_2. \]
For each irreducible $T$-module in this decomposition, we give the endpoint, the multiplicity, the dimension, the diameter, the shape, the action of $A$ upon an appropriate pure basis, and the isomorphism type.



\begin{center}
\resizebox{\linewidth}{!}{%
\begin{tabular}{c | c c c c c c c}
\toprule
\textbf{Irred. $T$-modules} & \textbf{Endpt.} & \textbf{Mult.} & \textbf{Dim.} & \textbf{Diam.} & \textbf{Shape} & $A$ \textbf{action} & \textbf{Iso. type} \\
\midrule
$W_0$ & 0 & 1 & 6 & 5 & $(1,1,1,1,1,1)$ & \eqref{eq:dod-I} & I \\
$W_{1,a}, W_{1,b}$ & 1 & 2 & 6 & 3 & $(1,2,2,1)$ & \eqref{eq:dod-II} & II \\
$W_{2}$ & 2 & 1 & 2 & 1 & $(1,1)$ & \eqref{eq:dod-III} & III \\
\bottomrule
\end{tabular}
}
\smallskip
\captionof{table}{\textit{Irreducible $T$-modules for the Dodecahedron} \\ 
$(\dim T = 6^2+6^2+2^2=76)$}
\end{center}

For each irreducible $T$-module in the table above, we now give a pure basis and the matrix representing $A$ on that basis. 

\subsection{Endpoint 0}

We now describe the primary irreducible $T$-module $W_0$. The module $W_0$ has a basis $\{e_i^*\}_{i=0}^5$, where $e_i^*$ is from Definition \ref{def:primary}. With respect to this basis the matrix representing $A$ is
\begin{equation}
    A: \begin{bmatrix}
0 & 3 & 0 & 0 & 0 & 0 \\
1 & 0 & 2 & 0 & 0 & 0 \\
0 & 1 & 1 & 1 & 0 & 0 \\
0 & 0 & 1 & 1 & 1 & 0 \\
0 & 0 & 0 & 2 & 0 & 1 \\
0 & 0 & 0 & 0 & 3 & 0 \\
\end{bmatrix}.\label{eq:dod-I}
\end{equation}

This matrix has eigenvalues $3, \sqrt{5}, 1, 0, -2, -\sqrt{5}$.

\subsection{Endpoint 1}

We now describe the Type II irreducible $T$-modules in our decomposition. For Type II, the multiplicity is 2 and the modules are $W_{1,a}$ and $W_{1,b}$. For each module, our basis has the form $\{\nu, A\nu, E_2^*A^2\nu, E_3^*A^2\nu, E_3^*A^3\nu, E_4^*A^3\nu\}$, where the seed vector $\nu$ is given below.
\begin{center}
    \begin{tabular}{c | c}
    \toprule
       \textbf{Module}  & \textbf {Essential part of }$\nu$ \\
       \midrule
       $W_{1,a}$  & $\rowvector{1 & -1 & 0}$ \\[1.67pt]
       $W_{1,b}$  & $\rowvector{1 & 1 & -2}$ \\
    \bottomrule
    \end{tabular}
    \captionof{table}{\textit{The seed vector $\nu$ for each irreducible $T$-module of Type II. Note that $A\nu=E_2^*A\nu$.}}
\end{center}

With respect to each basis, the matrix representing $A$ is 
\begin{equation}
    A: \begin{bmatrix}
  0 & 2 & -1 & 0 & 0 & 0 \\
  1 & 0 & 1 & 1 & 1 & 0 \\
  0 & 1 & 0 & 0 & 1 & 0 \\
  0 & 1 & -1 & 1 & 1 & 0 \\
  0 & 0 & 1 & 0 & -1 & 1 \\
  0 & 0 & 0 & 1 & 2 & 0 \\
\end{bmatrix}.
\label{eq:dod-II}
\end{equation}

This matrix has eigenvalues $\sqrt{5}, 1, 1, 0, -2, -\sqrt{5}$.

\subsection{Endpoint 2}

We now describe the Type III irreducible $T$-modules in our decomposition. For Type III, the multiplicity is 1 and the module is  $W_2$. For this module, our basis has the form $\{\nu, E_3^*A\nu\}$, where the seed vector $\nu$ is given below.
\begin{center}
    \begin{tabular}{c | c}
    \toprule
       \textbf{Module}  & \textbf {Essential part of }$\nu$ \\
       \midrule
       $W_{2}$  & $\rowvector{1 &-1&1&-1&-1&1}$ \\
    \bottomrule
    \end{tabular}
    \captionof{table}{\textit{The seed vector $\nu$ for the irreducible $T$-module of Type III.}}
\end{center}

With respect to this basis, the matrix representing $A$ is
\begin{equation}
    A: \begin{bmatrix}
  -1 & 1 \\
  1 & -1
\end{bmatrix}.
\label{eq:dod-III}
\end{equation}

This matrix has eigenvalues $0, -2$.


\renewcommand{\GraphNumber}{10}
\renewcommand{\GraphTitle}{Desargues Graph}
\renewcommand{\VertexCount}{20}
\renewcommand{\IntersectionArray}{\{3,2,2,1,1;1,1,2,2,3\}}
\renewcommand{\DateString}{2026.04.12 (orig. 2025.12.09)}

\section{Desargues Graph}
Throughout this section, we take $\Gamma$ to be the Desargues Graph. $\Gamma$ has 20 vertices and intersection array $\{3,2,2,1,1;1,1,2,2,3\}$.
\begin{center}
    \framebox[0.77\linewidth]{\includegraphics[width=0.8\linewidth]{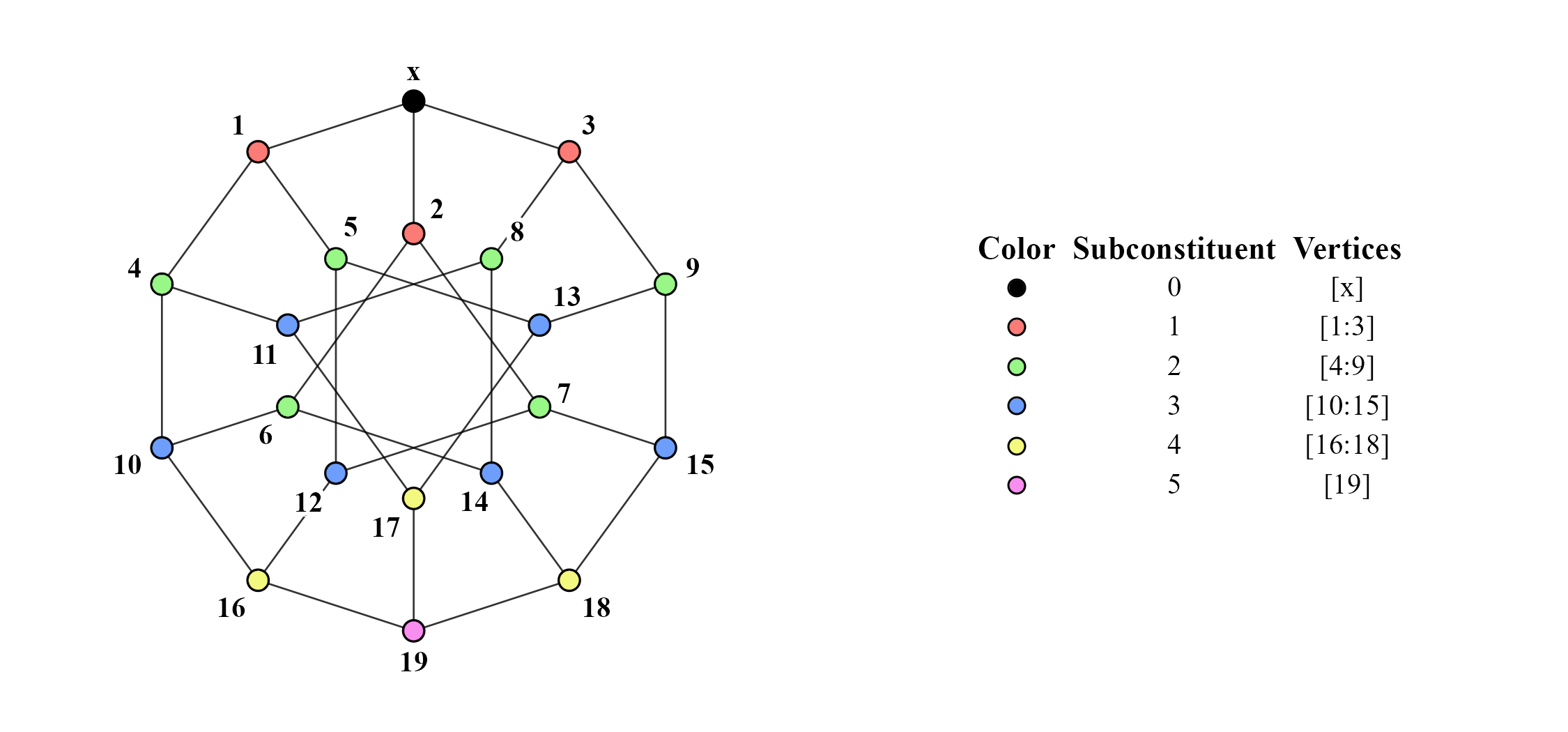}}
    \captionof{figure}{\textit{The Desargues Graph.
}}
    \end{center}

    The spectrum of the adjacency matrix $A$ is $3^12^41^5(-1)^5(-2)^4(-3)^1 $.

We decompose the standard module $V$ of $\Gamma$ into an orthogonal direct sum of irreducible $T$-modules. Our decomposition has the form
\[ V = W_0 \oplus W_{1,a} \oplus W_{1,b} \oplus W_{2,a} \oplus W_{2,b} \oplus W_{2,c}. \]
For each irreducible $T$-module in this decomposition, we give the endpoint, the multiplicity, the dimension, the diameter, the shape, the action of $A$ upon an appropriate pure basis, and the isomorphism type.



\begin{center}
\resizebox{\linewidth}{!}{%
\begin{tabular}{c | c c c c c c c}
\toprule
\textbf{Irred. $T$-modules} & \textbf{Endpt.} & \textbf{Mult.} & \textbf{Dim.} & \textbf{Diam.} & \textbf{Shape} & $A$ \textbf{action} & \textbf{Iso. type} \\
\midrule
$W_0$ & 0 & 1 & 6 & 5 & $(1,1,1,1,1,1)$ & \eqref{eq:des-I} & I \\
$W_{1,a}, W_{1,b}$ & 1 & 2 & 4 & 3 & $(1,1,1,1)$ & \eqref{eq:des-II} & II \\
$W_{2,a}$ & 2 & 1 & 2 & 1 & $(1,1)$ & \eqref{eq:des-III} & III \\
$W_{2,b}, W_{2,c}$ & 2 & 2 & 2 & 1 & $(1,1)$ & \eqref{eq:des-IV} & IV \\
\bottomrule
\end{tabular}
}
\smallskip
\captionof{table}{\textit{Irreducible $T$-modules for the Desargues Graph} \\
$(\dim T = 6^2+4^2+2^2+2^2=60)$}
\end{center}

For each irreducible $T$-module in the table above, we now give a pure basis and the matrix representing $A$ on that basis. 

\subsection{Endpoint 0}

We now describe the primary irreducible $T$-module $W_0$. The module $W_0$ has a basis $\{e_i^*\}_{i=0}^5$, where $e_i^*$ is from Definition \ref{def:primary}. With respect to this basis the matrix representing $A$ is
\begin{equation}
    A: \begin{bmatrix}
0 & 3 & 0 & 0 & 0 & 0 \\
1 & 0 & 2 & 0 & 0 & 0 \\
0 & 1 & 0 & 2 & 0 & 0 \\
0 & 0 & 2 & 0 & 1 & 0 \\
0 & 0 & 0 & 2 & 0 & 1 \\
0 & 0 & 0 & 0 & 3 & 0 \\
\end{bmatrix}.
\label{eq:des-I}
\end{equation}
This matrix has eigenvalues $3, 2, 1, -1, -2, -3$.

\subsection{Endpoint 1}

We now describe the Type II irreducible $T$-modules in our decomposition. For Type II, the multiplicity is 2 and the modules are $W_{1,a}$ and $W_{1,b}$. For each module, our basis has the form $\{\nu, A\nu, E_3^*A^2\nu, E_4^*A^3\nu\}$, where the seed vector $\nu$ is given below.
\begin{center}
    \begin{tabular}{c | c}
    \toprule
       \textbf{Module}  & \textbf {Essential part of }$\nu$ \\
       \midrule
       $W_{1,a}$  & $\rowvector{1 & -1 & 0}$ \\[1.67pt]
       $W_{1,b}$  & $\rowvector{1 & 1 & -2}$ \\
    \bottomrule
    \end{tabular}
    \captionof{table}{\textit{The seed vector $\nu$ for each irreducible $T$-module of Type II. Note that $A\nu=E_2^*A\nu$.}}
\end{center}

With respect to each basis, the matrix representing $A$ is 
\begin{equation}
    A: \begin{bmatrix}
  0 & 2 & 0 & 0 \\
  1 & 0 & 1 & 0 \\
  0 & 1 & 0 & 2 \\
  0 & 0 & 1 & 0 \\
\end{bmatrix}.
\label{eq:des-II}
\end{equation}

This matrix has eigenvalues $2, 1, -1, -2$.

\subsection{Endpoint 2}

We now describe the Type III irreducible $T$-modules in our decomposition. For Type III, the multiplicity is 1 and the module is  $W_{2,a}$. For this module, our basis has the form $\{\nu, A\nu\}$, where the seed vector $\nu$ is given below.
\begin{center}
    \begin{tabular}{c | c}
    \toprule
       \textbf{Module}  & \textbf {Essential part of }$\nu$ \\
       \midrule
       $W_{2,a}$  & $\rowvector{1 & -1 & 1 & -1 & 1 & -1}$ \\
    \bottomrule
    \end{tabular}
    \captionof{table}{\textit{The seed vector $\nu$ for the irreducible $T$-module of Type III. Note that $A\nu=E_3^*A\nu$.}}
\end{center}

With respect to this basis, the matrix representing $A$ is
\begin{equation}
    A: \begin{bmatrix}
  0 & 4 \\
  1 & 0 \\
\end{bmatrix}.
\label{eq:des-III}
\end{equation}

This matrix has eigenvalues $2, -2$.

We now describe the Type IV irreducible $T$-modules in our decomposition. For Type IV, the multiplicity is 2 and the modules are $W_{2,b}$ and $W_{2,c}$. For each module, our basis has the form $\{\nu, A\nu\}$, where the seed vector $\nu$ is given below.
\begin{center}
    \begin{tabular}{c | c}
    \toprule
       \textbf{Module}  & \textbf {Essential part of }$\nu$ \\
       \midrule
       $W_{2,b}$  & $\rowvector{2 & -2 & -1 & 1 & -1 & 1}$ \\[1.67pt]
       $W_{2,c}$  & $\rowvector{0 & 0 & 1 & -1 & -1 & 1}$ \\
    \bottomrule
    \end{tabular}
    \captionof{table}{\textit{The seed vector $\nu$ for each irreducible $T$-module of Type IV. Note that $A\nu = E_3^*A\nu$.}}
\end{center}

With respect to each basis, the matrix representing $A$ is 
\begin{equation}
    A: \begin{bmatrix}
  0 & 1 \\
  1 & 0 \\
\end{bmatrix}.
\label{eq:des-IV}
\end{equation}

This matrix has eigenvalues $1, -1$.


\renewcommand{\GraphNumber}{11}
\renewcommand{\GraphTitle}{Tutte's 12-cage}
\renewcommand{\VertexCount}{126}
\renewcommand{\IntersectionArray}{\{3,2,2,2,2,2;1,1,1,1,1,3\}}
\renewcommand{\DateString}{2026.04.12 (orig. 2026.01)}

\section{\texorpdfstring{Tutte's 12-cage, $x \in X^+$}{Tutte's 12-cage, x in X+}}
Throughout this section, we take $\Gamma$ to be Tutte's 12-cage. $\Gamma$ has 126 vertices and intersection array $\{3,2,2,2,2,2;1,1,1,1,1,3\}$.

Recall that $\Gamma$ is bipartite, with bipartition $X=X^+\cup X^-$. By \cite[Theorem~1.1]{iofinova1994}
the sets $X^+$ and $X^-$ are the orbits for the automorphism group of $\Gamma$.
In this section, we fix a base vertex $x \in X^+$.

\begin{center}
    \framebox[0.97\linewidth]{\includegraphics[width=\linewidth]{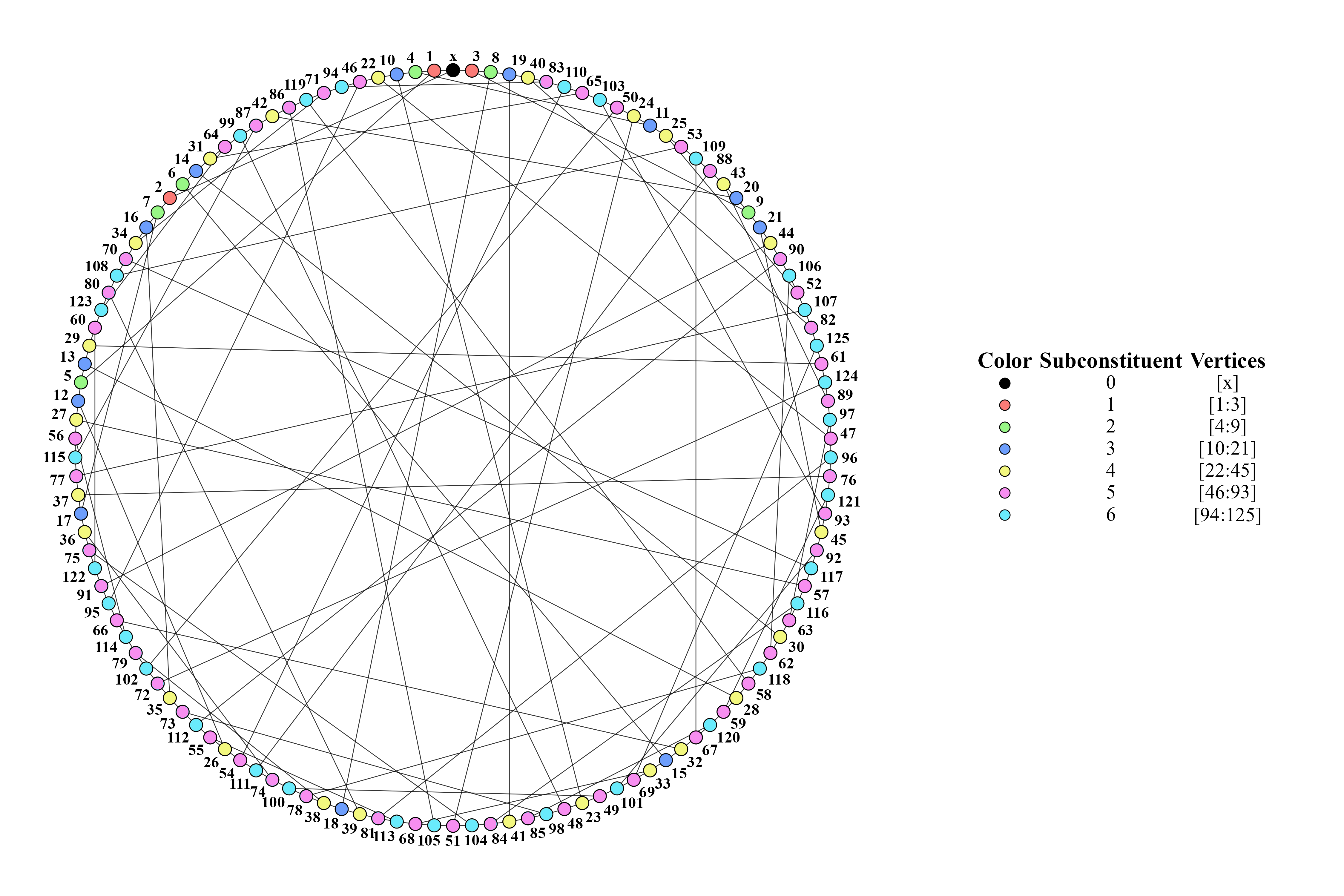}}
    \captionof{figure}{\textit{Tutte's 12-cage, $x \in X^+$.}\label{fig:12cage_B1}}
\end{center}

The spectrum of the adjacency matrix $A$ is 
\begin{equation}\label{eq:12cage-spec}
    3^1(\sqrt6)^{21}(\sqrt2)^{27}0^{28}(-\sqrt2)^{27}(-\sqrt6)^{21}(-3)^1.
\end{equation}
We decompose the standard module $V$ of $\Gamma$ into an orthogonal direct sum of irreducible $T$-modules. Our decomposition has the form

\[ 
\begin{aligned}
V = {} & W_0 \oplus W_{1,a} \oplus W_{1,b} \oplus W_{2,a} \oplus W_{2,b} \oplus W_{2,c} \oplus W_{3,a} \oplus \cdots \\
    & \cdots \oplus W_{3,f} \oplus W_{4,a} \oplus \cdots \oplus W_{4,l} \oplus W_{5,a} \oplus \cdots \oplus W_{5,l} \oplus W_6. 
\end{aligned}
\]


For each irreducible $T$-module in this decomposition, we give the endpoint, the multiplicity, the dimension, the diameter, the shape, the action of $A$ upon an appropriate pure basis, and the isomorphism type.





\begin{center}
\resizebox{\linewidth}{!}{%
\begin{tabular}{c | c c c c c c c}
\toprule
\textbf{Irred. $T$-modules} & \textbf{Endpt.} & \textbf{Mult.} & \textbf{Dim.} & \textbf{Diam.} & \textbf{Shape} & $A$ \textbf{action} & \textbf{Iso. type} \\
\midrule
$W_0$ & 0 & 1 & 7 & 6 & $(1,1,1,1,1,1,1)$ & \eqref{eq:t12-I} & I \\
$W_{1,a}, W_{1,b}$ & 1 & 2 & 5 & 4 & $(1,1,1,1,1)$ & \eqref{eq:t12-II} & II \\
$W_{2,a}, W_{2,b}, W_{2,c}$ & 2 & 3 & 5 & 4 & $(1,1,1,1,1)$ & \eqref{eq:t12-II} & III \\
$W_{3,a}, \dots, W_{3,f}$ & 3 & 6 & 5 & 3 & $(1,1,2,1)$ & \eqref{eq:t12-IV} & IV \\
$W_{4,a}, W_{4,b}, W_{4,c}$ & 4 & 3 & 2 & 1 & $(1,1)$ & \eqref{eq:t12-V} & V \\
$W_{4,d}, W_{4,e}, W_{4,f}$ & 4 & 3 & 3 & 2 & $(1,1,1)$ & \eqref{eq:t12-VI} & VI \\
$W_{4,g}, \dots, W_{4,l}$ & 4 & 6 & 5 & 2 & $(1,2,2)$ & \eqref{eq:t12-VII} & VII \\
$W_{5,a}, \dots, W_{5,f}$ & 5 & 6 & 2 & 1 & $(1,1)$ & \eqref{eq:t12-V} & VIII \\
$W_{5,g}, \dots, W_{5,l}$ & 5 & 6 & 1 & 0 & $(1)$ & \eqref{eq:t12-IX} & IX \\
$W_{6}$ & 6 & 1 & 1 & 0 & $(1)$ & \eqref{eq:t12-IX} & X \\
\bottomrule
\end{tabular}
}
\smallskip
\captionof{table}{\textit{Irreducible $T$-modules for Tutte's 12-cage, $x\in X^+$} \\ 
$(\dim T = 7^2+5^2+5^2 +5^2+2^2+3^2+5^2+2^2+1^2+1^2=168)$}
\end{center}

For each irreducible $T$-module in the table above, we now give a pure basis and the matrix representing $A$ on that basis. 

\subsection{Endpoint 0}

We now describe the primary irreducible $T$-module $W_0$. The module $W_0$ has a basis $\{e_i^*\}_{i=0}^6$, where $e_i^*$ is from Definition \ref{def:primary}. With respect to this basis the matrix representing $A$ is
\begin{equation}
    A: \begin{bmatrix}
0 & 3 & 0 & 0 & 0 & 0 & 0 \\
1 & 0 & 2 & 0 & 0 & 0 & 0 \\
0 & 1 & 0 & 2 & 0 & 0 & 0 \\
0 & 0 & 1 & 0 & 2 & 0 & 0 \\
0 & 0 & 0 & 1 & 0 & 2 & 0 \\
0 & 0 & 0 & 0 & 1 & 0 & 2 \\
0 & 0 & 0 & 0 & 0 & 3 & 0 \\
\end{bmatrix}.
\label{eq:t12-I}
\end{equation}
This matrix has eigenvalues $3, \sqrt{6}, \sqrt{2}, 0, -\sqrt{2}, -\sqrt{6}, -3$.

\subsection{Endpoint 1}

We now describe the Type II irreducible $T$-modules in our decomposition. For Type II, the multiplicity is 2 and the modules are $W_{1,a}$ and $W_{1,b}$. For each module, our basis has the form $\{\nu, A\nu, E_3^*A^2\nu, E_4^*A^3\nu,  E_5^*A^4\nu\}$, where the seed vector $\nu$ is given below.
\begin{center}
    \begin{tabular}{c | c}
    \toprule
       \textbf{Module}  & \textbf {Essential part of }$\nu$ \\
       \midrule
       $W_{1,a}$  & $\rowvector{1 & -1 & 0}$ \\[1.67pt]
       $W_{1,b}$  & $\rowvector{1 & 1 & -2}$ \\
    \bottomrule
    \end{tabular}
    \captionof{table}{\textit{The seed vector $\nu$ for each irreducible $T$-module of Type II. Note that $A\nu=E_2^*A\nu$.}}
\end{center}

With respect to each basis, the matrix representing $A$ is 
\begin{equation}
    A: \begin{bmatrix}
 0 & 2 & 0 & 0 & 0 \\
 1 & 0 & 2 & 0 & 0 \\
 0 & 1 & 0 & 2 & 0 \\
 0 & 0 & 1 & 0 & 2 \\
 0 & 0 & 0 & 1 & 0 \\
\end{bmatrix}.
\label{eq:t12-II}
\end{equation}

This matrix has eigenvalues $\sqrt{6}, \sqrt{2}, 0, -\sqrt{2}, -\sqrt{6}$.

\subsection{Endpoint 2}

We now describe the Type III irreducible $T$-modules in our decomposition. For Type III, the multiplicity is 3 and the modules are $W_{2,a},W_{2,b},W_{2,c}$. For each module, our basis has the form $\{\nu, A\nu, E_4^*A^2\nu, E_5^*A^3\nu, E_6^*A^4\nu\}$, where the seed vector $\nu$ is given below.
\begin{center}
    \begin{tabular}{c | c}
    \toprule
       \textbf{Module}  & \textbf {Essential part of }$\nu$ \\
       \midrule
       $W_{2,a}$  & $\rowvector{1 & -1 & 0 & 0 & 0 & 0}$ \\[1.67pt]
       $W_{2,b}$  & $\rowvector{0 & 0 & 1 & -1 & 0 & 0}$ \\[1.67pt]
       $W_{2,c}$  & $\rowvector{0 & 0 & 0 & 0 & 1 & -1}$ \\
    \bottomrule
   \end{tabular}
    \captionof{table}{\textit{The seed vector $\nu$ for each irreducible $T$-module of Type III. Note that  $A\nu=E_3^*A\nu$.}}
\end{center}

With respect to each basis, the matrix representing $A$ is identical to \eqref{eq:t12-II}.

\subsection{Endpoint 3}

We now describe the Type IV irreducible $T$-modules in our decomposition. For Type IV, the multiplicity is 6 and the modules are $W_{3,a},\dots,W_{3,f}$. For each module, our basis has the form 
\begin{equation*}
    \{\nu, A\nu, E_5^*A^2\nu, E_5^*A^4\nu - 6E_5^*A^2\nu, E_6^*A^3\nu\},
\end{equation*} 
where the seed vector $\nu$ is given below.
\begin{center}
    \begin{tabular}{c | c}
    \toprule
       \textbf{Module}  & \textbf {Essential part of }$\nu$ \\
       \midrule
       $W_{3,a}$  & $\rowvector{1 & -1 & 0 & 0 & 0 & 0 & 0 & 0 & 0 & 0 & 0 & 0}$ \\[1.67pt]
       $W_{3,b}$  & $\rowvector{0 & 0 & 1 & -1 & 0 & 0 & 0 & 0 & 0 & 0 & 0 & 0}$ \\[1.67pt]
      $W_{3,c}$  & $\rowvector{0 & 0 & 0 & 0 & 1 & -1 & 0 & 0 & 0 & 0 & 0 & 0}$ \\[1.67pt]
       $W_{3,d}$  & $\rowvector{0 & 0 & 0 & 0 & 0 & 0 & 1 & -1 & 0 & 0 & 0 & 0}$ \\[1.67pt]
       $W_{3,e}$  & $\rowvector{0 & 0 & 0 & 0 & 0 & 0 & 0 & 0 & 1 & -1 & 0 & 0}$ \\[1.67pt]
     $W_{3,f}$  & $\rowvector{0 & 0 & 0 & 0 & 0 & 0 & 0 & 0 & 0 & 0 & 1 & -1}$ \\
    \bottomrule
    \end{tabular}
    \captionof{table}{\textit{The seed vector $\nu$ for each irreducible $T$-module of Type IV. Note that $A\nu=E_4^*A\nu$.}}
\end{center}

With respect to each basis, the matrix representing $A$ is 
\begin{equation}
    A: \begin{bmatrix}
  0 & 2 & 0 & 0 & 0 \\
  1 & 0 & 2 & 0 & 0 \\
  0 & 1 & 0 & 0 & 2 \\
  0 & 0 & 0 & 0 & 1 \\
  0 & 0 & 1 & 2 & 0 \\
\end{bmatrix}.
\label{eq:t12-IV}
\end{equation}

This matrix has eigenvalues $\sqrt{6}, \sqrt{2}, 0, -\sqrt{2}, -\sqrt{6}$.

\subsection{Endpoint 4}

We now describe the Type V irreducible $T$-modules in our decomposition. For Type V, the multiplicity is 3 and the modules are $W_{4,a},W_{4,b},W_{4,c}$. For each module, our basis has the form $\{\nu, A\nu\}$, where the seed vector $\nu$ is given below.
{
\setlength{\arraycolsep}{3pt}
\begin{center}
    \begin{tabular}{c | c}
    \toprule
       \textbf{Module}  & \textbf {Essential part of }$\nu$ \\
       \midrule
       $W_{4,a}$  & $\rowvector{2 & -2 & -2 & 2 & 0 & 0 & 0 & 0 & -1 & 1 & -1 & 1 & -1 & 1 & 1 & -1 & 1 & -1 & -1 & 1 & 1 & -1 & -1 & 1}$ \\[1.67pt]
       $W_{4,b}$  & $\rowvector{0 & 0 & 0 & 0 & 2 & -2 & -2 & 2 & 1 & -1 & 1 & -1 & 1 & -1 & -1 & 1 & 1 & -1 & -1 & 1 & 1 & -1 & -1 & 1}$ \\[1.67pt]
       $W_{4,c}$  & $\rowvector{0 & 0 & 0 & 0 & 0 & 0 & 0 & 0 & 1 & -1 & 1 & -1 & -1 & 1 & 1 & -1 & -1 & 1 & 1 & -1 & 1 & -1 & -1 & 1}$ \\
    \bottomrule
    \end{tabular}
    \captionof{table}{\textit{The seed vector $\nu$ for each irreducible $T$-module of Type V. Note that $A\nu=E_5^*A\nu$.}}
\end{center}
}

With respect to each basis, the matrix representing $A$ is 
\begin{equation}
    A: \begin{bmatrix}
  0 & 2 \\
  1 & 0 \end{bmatrix}.
\label{eq:t12-V}
\end{equation}

This matrix has eigenvalues $\sqrt{2}, -\sqrt{2}$.

We now describe the Type VI irreducible $T$-modules in our decomposition. For Type VI, the multiplicity is 3 and the modules are $W_{4,d},W_{4,e},W_{4,f}$. For each module, our basis has the form $\{\nu, A\nu, E_6^*A^2\nu\}$,
where the seed vector $\nu$ is given below.
\begin{center}
\setlength{\arraycolsep}{3pt}
    \begin{tabular}{c | c}
    \toprule
       \textbf{Module}  & \textbf {Essential part of }$\nu$ \\
       \midrule
       $W_{4,d}$  & $\rowvector{2 & -2 & -2 & 2 & 0 & 0 & 0 & 0 & 1 & -1 & 1 & -1 & 1 & -1 & -1 & 1 & -1 & 1 & 1 & -1 & -1 & 1 & 1 & -1}$ \\[1.67pt]
       $W_{4,e}$  & $\rowvector{0 & 0 & 0 & 0 & 0 & 0 & 0 & 0 & -1 & 1 & -1 & 1 & 1 & -1 & -1 & 1 & -1 & 1 & 1 & -1 & 1 & -1 & -1 & 1}$ \\[1.67pt]
       $W_{4,f}$  & $\rowvector{0 & 0 & 0 & 0 & 2 & -2 & -2 & 2 & -1 & 1 & -1 & 1 & -1 & 1 & 1 & -1 & -1 & 1 & 1 & -1 & -1 & 1 & 1 & -1}$ \\
    \bottomrule
    \end{tabular}
    \captionof{table}{\textit{The seed vector $\nu$ for each irreducible $T$-module of Type VI. Note that $A\nu=E_5^*A\nu$.}}
\end{center}

With respect to each basis, the matrix representing $A$ is 
\begin{equation}
    A: \begin{bmatrix}
  0 & 2 & 0 \\
  1 & 0 & 4 \\
  0 & 1 & 0 \end{bmatrix}.
\label{eq:t12-VI}
\end{equation}

This matrix has eigenvalues $\sqrt{6}, 0, -\sqrt{6}$.

We now describe the Type VII irreducible $T$-modules in our decomposition. For Type VII, the multiplicity is 6 and the modules are $W_{4,g},\dots,W_{4,l}$. For each module, our basis has the form 
\begin{equation*}
    \{\nu, A\nu, E_5^*A^3\nu - 4A\nu, E_6^*A^2\nu,  E_6^*A^4\nu - 6E_6^*A^2\nu\},
\end{equation*}
where the seed vector $\nu$ is given below.
\begin{center}
    \begin{tabular}{c | c}
    \toprule
       \textbf{Module}  & \textbf {Essential part of }$\nu$ \\
       \midrule
       $W_{4,g}$  & $\rowvector{1 & -1 & 1 & -1 & 0 & 0 & 0 & 0 & 0 & 0 & 0 & 0 & 0 & 0 & 0 & 0 & 0 & 0 & 0 & 0 & 0 & 0 & 0 & 0}$ \\[1.67pt]
       $W_{4,h}$  & $\rowvector{0 & 0 & 0 & 0 & 1 & -1 & 1 & -1 & 0 & 0 & 0 & 0 & 0 & 0 & 0 & 0 & 0 & 0 & 0 & 0 & 0 & 0 & 0 & 0}$ \\[1.67pt]
       $W_{4,i}$  & $\rowvector{0 & 0 & 0 & 0 & 0 & 0 & 0 & 0 & 1 & -1 & -1 & 1 & 0 & 0 & 0 & 0 & 0 & 0 & 0 & 0 & 0 & 0 & 0 & 0}$ \\[1.67pt]
       $W_{4,j}$  & $\rowvector{0 & 0 & 0 & 0 & 0 & 0 & 0 & 0 & 0 & 0 & 0 & 0 & 1 & -1 & 1 & -1 & 0 & 0 & 0 & 0 & 0 & 0 & 0 & 0}$ \\[1.67pt]
       $W_{4,k}$  & $\rowvector{0 & 0 & 0 & 0 & 0 & 0 & 0 & 0 & 0 & 0 & 0 & 0 & 0 & 0 & 0 & 0 & 1 & -1 & 1 & -1 & 0 & 0 & 0 & 0}$ \\[1.67pt]
       $W_{4,l}$  & $\rowvector{0 & 0 & 0 & 0 & 0 & 0 & 0 & 0 & 0 & 0 & 0 & 0 & 0 & 0 & 0 & 0 & 0 & 0 & 0 & 0 & 1 & -1 & 1 & -1}$ \\
    \bottomrule
    \end{tabular}
    \captionof{table}{\textit{The seed vector $\nu$ for each irreducible $T$-module of Type VII. Note that $A\nu=E_5^*A\nu$.}}
\end{center}

With respect to each basis, the matrix representing $A$ is 
\begin{equation}
    A: \begin{bmatrix}
  0 & 2 & 0 & 0 & 0 \\
  1 & 0 & 0 & 2 & 0 \\
  0 & 0 & 0 & 1 & 2 \\
  0 & 1 & 2 & 0 & 0 \\
  0 & 0 & 1 & 0 & 0 \\
\end{bmatrix}.
\label{eq:t12-VII}
\end{equation}

This matrix has eigenvalues $\sqrt{6}, \sqrt{2}, 0, -\sqrt{2}, -\sqrt{6}$.

\subsection{Endpoint 5}

We now describe the Type VIII irreducible $T$-modules in our decomposition. For Type VIII, the multiplicity is 6 and the modules are $W_{5,a},\dots,W_{5,f}$. For each module, our basis has the form $\{\nu, A\nu\}$, where the seed vector $\nu$ is given below.
\begin{center}
    \resizebox{\linewidth}{!}{%
    \begin{tabular}{c | c}
    \toprule
       \textbf{Module}  & \textbf {Essential part of }$\nu$ \\
       \midrule
       $W_{5,a}$ & $\smallrowvector{1 & -1 & 1 & -1 & -1 & 1 & 1 & -1 & 1 & -1 & -1 & 1 & -1 & 1 & -1 & 1 & 1 & -1 & 1 & -1 & 1 & -1 & 1 & -1 & 1 & -1 & 1 & -1 & 1 & -1 & 1 & -1 & 0 & 0 & 0 & 0 & 0 & 0 & 0 & 0 & 0 & 0 & 0 & 0 & 0 & 0 & 0 & 0}$ \\[1.67pt]
       $W_{5,b}$ & $\smallrowvector{1 & -1 & -1 & 1 & 1 & -1 & 1 & -1 & -1 & 1 & -1 & 1 & -1 & 1 & 1 & -1 & -1 & 1 & 1 & -1 & -1 & 1 & 1 & -1 & -1 & 1 & 1 & -1 & 1 & -1 & -1 & 1 & 0 & 0 & 0 & 0 & 0 & 0 & 0 & 0 & 0 & 0 & 0 & 0 & 0 & 0 & 0 & 0}$ \\[1.67pt]
       $W_{5,c}$ & $\smallrowvector{1 & -1 & 0 & 0 & 0 & 0 & -1 & 1 & -1 & 1 & 0 & 0 & 0 & 0 & -1 & 1 & 0 & 0 & -1 & 1 & 0 & 0 & 1 & -1 & 1 & -1 & 0 & 0 & 0 & 0 & -1 & 1 & 2 & -2 & 0 & 0 & 0 & 0 & 0 & 0 & -1 & 1 & -1 & 1 & -1 & 1 & -1 & 1}$ \\[1.67pt]
       $W_{5,d}$ & $\smallrowvector{-1 & 1 & 0 & 0 & 0 & 0 & 1 & -1 & 1 & -1 & 0 & 0 & 0 & 0 & 1 & -1 & -2 & 2 & -1 & 1 & 2 & -2 & 1 & -1 & 1 & -1 & -2 & 2 & 2 & -2 & -1 & 1 & 0 & 0 & 2 & -2 & 2 & -2 & -2 & 2 & 3 & -3 & -1 & 1 & -1 & 1 & -1 & 1}$ \\[1.67pt]
       $W_{5,e}$ & $\smallrowvector{1 & -1 & 3 & -3 & 3 & -3 & -1 & 1 & -1 & 1 & -3 & 3 & 3 & -3 & -1 & 1 & -1 & 1 & 1 & -1 & 1 & -1 & -1 & 1 & -1 & 1 & -1 & 1 & 1 & -1 & 1 & -1 & 0 & 0 & -2 & 2 & 4 & -4 & 2 & -2 & 0 & 0 & 4 & -4 & -2 & 2 & -2 & 2}$ \\[1.67pt]
       $W_{5,f}$ & $\smallrowvector{1 & -1 & -1 & 1 & -1 & 1 & -1 & 1 & -1 & 1 & 1 & -1 & -1 & 1 & -1 & 1 & -1 & 1 & 1 & -1 & 1 & -1 & -1 & 1 & -1 & 1 & -1 & 1 & 1 & -1 & 1 & -1 & 0 & 0 & 2 & -2 & 0 & 0 & 2 & -2 & 0 & 0 & 0 & 0 & 2 & -2 & -2 & 2}$ \\
    \bottomrule
    \end{tabular}}
    \captionof{table}{\textit{The seed vector $\nu$ for each irreducible $T$-module of Type VIII. Note that $A\nu = E_6^*A\nu$.}}
\end{center}

With respect to each basis, the matrix representing $A$ is identical to \eqref{eq:t12-V}.

We now describe the Type IX irreducible $T$-modules in our decomposition. For Type IX, the multiplicity is 6 and the modules are $W_{5,g},\dots,W_{5,l}$. For each module, our basis has the form $\{\nu\}$, where the seed vector $\nu$ is given below.
\begin{center}
    \resizebox{\linewidth}{!}{%
    \begin{tabular}{c | c}
    \toprule
       \textbf{Module}  & \textbf {Essential part of }$\nu$ \\
       \midrule
       $W_{5,g}$  & $\smallrowvector{0 & 0 & 1 & -1 & -1 & 1 & 0 & 0 & -1 & 1 & 0 & 0 & 0 & 0 & 1 & -1 & 0 & 0 & -1 & 1 & 0 & 0 & -1 & 1 & 0 & 0 & 1 & -1 & 1 & -1 & 0 & 0 & 0 & 0 & 0 & 0 & 0 & 0 & 0 & 0 & 0 & 0 & 0 & 0 & 0 & 0 & 0 & 0}$ \\[1.67pt]
       $W_{5,h}$  & $\smallrowvector{1 & -1 & 0 & 0 & 0 & 0 & 1 & -1 & 0 & 0 & 1 & -1 & 1 & -1 & 0 & 0 & -1 & 1 & 0 & 0 & -1 & 1 & 0 & 0 & 1 & -1 & 0 & 0 & 0 & 0 & 1 & -1 & 0 & 0 & 0 & 0 & 0 & 0 & 0 & 0 & 0 & 0 & 0 & 0 & 0 & 0 & 0 & 0}$ \\[1.67pt]
       $W_{5,i}$  & $\smallrowvector{0 & 0 & 1 & -1 & 1 & -1 & 0 & 0 & 0 & 0 & 1 & -1 & -1 & 1 & 0 & 0 & -1 & 1 & 0 & 0 & 1 & -1 & 0 & 0 & 0 & 0 & 1 & -1 & -1 & 1 & 0 & 0 & 2 & -2 & 0 & 0 & 0 & 0 & 0 & 0 & 1 & -1 & 1 & -1 & 1 & -1 & 1 & -1}$ \\[1.67pt]
       $W_{5,j}$  & $\smallrowvector{1 & -1 & 0 & 0 & 0 & 0 & -1 & 1 & 1 & -1 & 0 & 0 & 0 & 0 & 1 & -1 & 0 & 0 & 1 & -1 & 0 & 0 & -1 & 1 & 1 & -1 & 0 & 0 & 0 & 0 & -1 & 1 & 0 & 0 & -2 & 2 & 0 & 0 & 0 & 0 & 1 & -1 & -1 & 1 & 1 & -1 & -1 & 1}$ \\[1.67pt]
       $W_{5,k}$  & $\smallrowvector{1 & -1 & 0 & 0 & 0 & 0 & -1 & 1 & 1 & -1 & 0 & 0 & 0 & 0 & 1 & -1 & 0 & 0 & -1 & 1 & 0 & 0 & 1 & -1 & -1 & 1 & 0 & 0 & 0 & 0 & 1 & -1 & 0 & 0 & 0 & 0 & 2 & -2 & 0 & 0 & -1 & 1 & -1 & 1 & 1 & -1 & 1 & -1}$ \\[1.67pt]
       $W_{5,l}$  & $\smallrowvector{0 & 0 & 1 & -1 & 1 & -1 & 0 & 0 & 0 & 0 & 1 & -1 & -1 & 1 & 0 & 0 & 1 & -1 & 0 & 0 & -1 & 1 & 0 & 0 & 0 & 0 & -1 & 1 & 1 & -1 & 0 & 0 & 0 & 0 & 0 & 0 & 0 & 0 & 2 & -2 & 1 & -1 & -1 & 1 & -1 & 1 & 1 & -1}$ \\
    \bottomrule
    \end{tabular}}
    \captionof{table}{\textit{The seed vector $\nu$ for each irreducible $T$-module of Type IX.}}
\end{center}

With respect to each basis, the matrix representing $A$ is
\begin{equation}
  A: \begin{bmatrix}
  0
\end{bmatrix}.
\label{eq:t12-IX}
\end{equation}
This matrix has eigenvalue $0$.

\subsection{Endpoint 6}

We now describe the Type X irreducible $T$-modules in our decomposition. For Type X, the multiplicity is 1 and the module is  $W_6$. For this module, our basis has the form $\{\nu\}$, where the seed vector $\nu$ is given below.
\begin{center}
    \begin{tabular}{c | c}
    \toprule
       \textbf{Module}  & \textbf {Essential part of }$\nu$ \\
       \midrule
       $W_{6}$  & $\smallrowvector{1 & -1 & 1 & -1 & -1 & 1 & -1 & 1 & -1 & 1 & -1 & 1 & -1 & 1 & 1 & -1 & -1 & 1 & 1 & -1 & 1 & -1 & 1 & -1 & 1 & -1 & 1 & -1 & 1 & -1 & 1 & -1}$ \\
    \bottomrule
    \end{tabular}
    \captionof{table}{\textit{The seed vector $\nu$ for the irreducible $T$-module of Type X.}}
\end{center}

With respect to this basis, the matrix representing $A$ is identical to \eqref{eq:t12-IX}.

\renewcommand{\GraphNumber}{11b}
\renewcommand{\GraphTitle}{Tutte's 12-cage (Second Base Vertex)}
\renewcommand{\VertexCount}{126}
\renewcommand{\IntersectionArray}{\{3,2,2,2,2,2;1,1,1,1,1,3\}}
\renewcommand{\DateString}{2026.05.17}

\section{\texorpdfstring{Tutte's 12-cage, $x \in X^-$}{Tutte's 12-cage, x in X-}}\label{sec:12cage_b2}

Throughout this section, we take $\Gamma$ to be Tutte's 12 cage. Recall that
$\Gamma$ has 126 vertices and intersection array $\{3,2,2,2,2,2;1,1,1,1,1,3\}$. Recall that $\Gamma$ is bipartite,
with bipartition $X=X^+ \cup X^-$. In this section, we fix a base vertex $x \in X^-$. 

\begin{center}
    \framebox[0.97\linewidth]{\includegraphics[width=\linewidth]{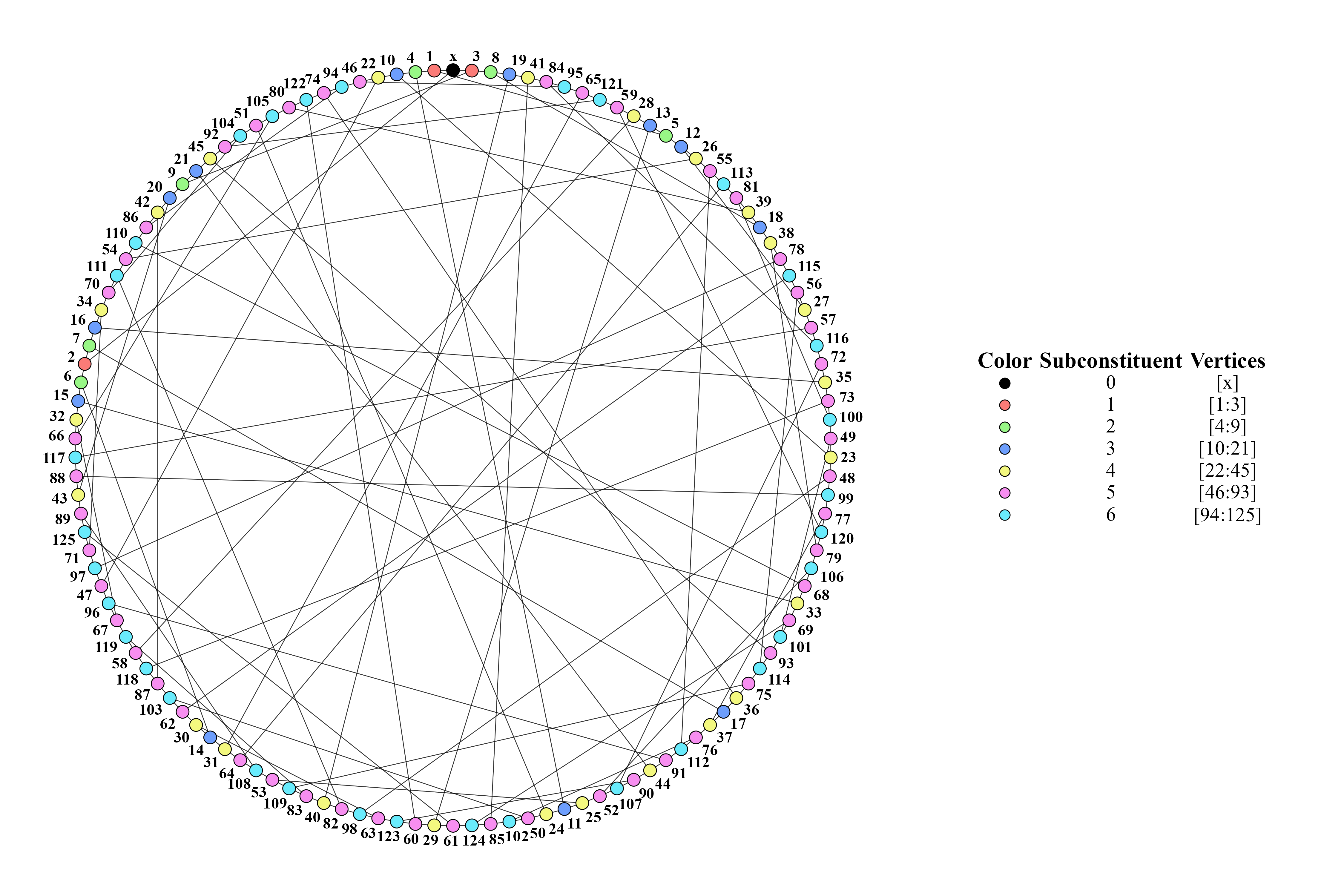}}
    \captionof{figure}{\textit{Tutte's 12-cage, $x \in X^-$.}\label{fig:12cage_B2}}
\end{center}

The spectrum of the adjacency matrix $A$ remains unchanged from \eqref{eq:12cage-spec}.

Our new decomposition has the form
\[
\begin{aligned}
V ={}& W_0 \oplus W_{1,a} \oplus W_{1,b} \oplus W_{2,a}
       \oplus W_{2,b} \oplus W_{2,c} \oplus W_{3,a} \oplus \cdots \\ & \cdots \oplus W_{3,f}
       \oplus W_{4,a} \oplus \cdots \oplus W_{4,l}
       \oplus W_{5,a} \oplus \cdots \oplus W_{5,j}
       \oplus W_{6,a} \oplus W_{6,b}.
\end{aligned}
\]

For each irreducible $T$-module in this decomposition, we give the isomorphism type, the endpoint, the dimension, the multiplicity, the diameter, the shape, and the action of $A$ upon an appropriate pure basis.
\begin{center}
\resizebox{\linewidth}{!}{%
\begin{tabular}{c | c c c c c c c}
\toprule
\textbf{Irred. $T$-modules} & \textbf{Endpt.} & \textbf{Mult.} & \textbf{Dim.} & \textbf{Diam.} & \textbf{Shape} & $A$ \textbf{action} & \textbf{Iso. type} \\
\midrule
$W_0$ & 0 & 1 & 7 & 6 & $(1,1,1,1,1,1,1)$ & \eqref{eq:t12v2-I} & I \\
$W_{1,a}, W_{1,b}$ & 1 & 2 & 5 & 4 & $(1,1,1,1,1)$ & \eqref{eq:t12v2-II} & II \\
$W_{2,a}, W_{2,b}, W_{2,c}$ & 2 & 3 & 5 & 4 & $(1,1,1,1,1)$ & \eqref{eq:t12v2-II} & III \\
$W_{3,a},\dots,W_{3,f}$ & 3 & 6 & 5 & 3 & $(1,1,2,1)$ & \eqref{eq:t12v2-IV} & IV \\
$W_{4,a},\dots,W_{4,h}$ & 4 & 8 & 5 & 2 & $(1,2,2)$ & \eqref{eq:t12v2-V} & V \\
$W_{4,i}, W_{4,j}, W_{4,k}, W_{4,l}$ & 4 & 4 & 2 & 1 & $(1,1)$ & \eqref{eq:t12v2-VI} & VI \\
$W_{5,a}, W_{5,b}, W_{5,c}$ & 5 & 3 & 2 & 1 & $(1,1)$ & \eqref{eq:t12v2-VI} & VII \\
$W_{5,d}$ & 5 & 1 & 2 & 1 & $(1,1)$ & \eqref{eq:t12v2-VIII} & VIII \\
$W_{5,e},\dots, W_{5,j}$ & 5 & 6 & 1 & 0 & $(1)$ & \eqref{eq:t12v2-IX} & IX \\
$W_{6,a}, W_{6,b}$ & 6 & 2 & 1 & 0 & $(1)$ & \eqref{eq:t12v2-IX} & X \\
\bottomrule
\end{tabular}
}

\smallskip
\captionof{table}{\textit{Irreducible $T$-modules for Tutte's 12-cage, $x \in X^-$} \\ 
$(\dim T = 7^2+5^2+5^2+5^2+5^2+2^2+2^2+2^2+1^2+1^2 = 163)$}
\end{center}

For each irreducible $T$-module in the table above, we now give a pure basis and the matrix representing $A$ on that basis.

\subsection{Endpoint 0}

We now describe the primary irreducible $T$-module $W_0$.
The module $W_0$ has a basis $\{e_i^*\}_{i=0}^6$, where $e_i^*$ is from Definition \ref{def:primary}.
With respect to this basis the matrix representing $A$ is
\begin{equation}
    A: \begin{bmatrix}
0 & 3 & 0 & 0 & 0 & 0 & 0 \\
1 & 0 & 2 & 0 & 0 & 0 & 0 \\
0 & 1 & 0 & 2 & 0 & 0 & 0 \\
0 & 0 & 1 & 0 & 2 & 0 & 0 \\
0 & 0 & 0 & 1 & 0 & 2 & 0 \\
0 & 0 & 0 & 0 & 1 & 0 & 2 \\
0 & 0 & 0 & 0 & 0 & 3 & 0 \\
\end{bmatrix}.
\label{eq:t12v2-I}
\end{equation}
This matrix has eigenvalues $3, \sqrt{6}, \sqrt{2}, 0, -\sqrt{2}, -\sqrt{6}, -3$.

\subsection{Endpoint 1}

We now describe the Type II irreducible $T$-modules in our decomposition.
For Type II, the multiplicity is 2 and the modules are $W_{1,a}$ and $W_{1,b}$.
For each module, our basis has the form $\{\nu, A\nu, E_3^*A^2\nu, E_4^*A^3\nu, E_5^*A^4\nu\}$, where the seed vector $\nu$ is given below.

\begin{center}
    \begin{tabular}{c | c}
    \toprule
       \textbf{Module}  & \textbf{Essential part of }$\nu$ \\
       \midrule
       $W_{1,a}$  & $\rowvector{1 & -1 & 0}$ \\[1.67pt]
       $W_{1,b}$  & $\rowvector{1 & 1 & -2}$ \\
    \bottomrule
    \end{tabular}
    \captionof{table}{\textit{The seed vector $\nu$ for each irreducible $T$-module of Type II. Note that $A \nu = E_2^*A\nu$.}}
\end{center}

With respect to each basis, the matrix representing $A$ is 
\begin{equation}
    A: \begin{bmatrix}
 0 & 2 & 0 & 0 & 0 \\
 1 & 0 & 2 & 0 & 0 \\
 0 & 1 & 0 & 2 & 0 \\
 0 & 0 & 1 & 0 & 2 \\
 0 & 0 & 0 & 1 & 0 \\
\end{bmatrix}.
\label{eq:t12v2-II}
\end{equation}
This matrix has eigenvalues $\sqrt{6}, \sqrt{2}, 0, -\sqrt{2}, -\sqrt{6}$.

\subsection{Endpoint 2}

We now describe the Type III irreducible $T$-modules in our decomposition.
For Type III, the multiplicity is 3 and the modules are $W_{2,a},W_{2,b},W_{2,c}$.
For each module, our basis has the form $\{\nu, A\nu, E_4^*A^2\nu, E_5^*A^3\nu, E_6^*A^4\nu\}$, where the seed vector $\nu$ is given below.

\begin{center}
    \begin{tabular}{c | c}
    \toprule
       \textbf{Module}  & \textbf{Essential part of }$\nu$ \\
       \midrule
       $W_{2,a}$  & $\rowvector{1 & -1 & 0 & 0 & 0 & 0}$ \\[1.67pt]
       $W_{2,b}$  & $\rowvector{0 & 0 & 1 & -1 & 0 & 0}$ \\[1.67pt]
       $W_{2,c}$  & $\rowvector{0 & 0 & 0 & 0 & 1 & -1}$ \\
    \bottomrule
    \end{tabular}
    \captionof{table}{\textit{The seed vector $\nu$ for each irreducible $T$-module of Type III. Note that $A \nu = E_3^*A\nu$.}}
\end{center}

With respect to each basis, the matrix representing $A$ is identical to \eqref{eq:t12v2-II}.

\subsection{Endpoint 3}

We now describe the Type IV irreducible $T$-modules in our decomposition.
For Type IV, the multiplicity is 6 and the modules are $W_{3,a},\dots,W_{3,f}$.
For each module, our basis has the form $$\{\nu, A\nu, E_5^*A^2\nu,E_5^*A^4\nu - 6E_5^*A^2\nu, E_6^*A^3\nu \},$$ where the seed vector $\nu$ is given below.

\begin{center}
    \begin{tabular}{c | c}
    \toprule
       \textbf{Module}  & \textbf{Essential part of }$\nu$ \\
       \midrule
       $W_{3,a}$  & $\rowvector{1 & -1 & 0 & 0 & 0 & 0 & 0 & 0 & 0 & 0 & 0 & 0}$ \\[1.67pt]
       $W_{3,b}$  & $\rowvector{0 & 0 & 1 & -1 & 0 & 0 & 0 & 0 & 0 & 0 & 0 & 0}$ \\[1.67pt]
       $W_{3,c}$  & $\rowvector{0 & 0 & 0 & 0 & 1 & -1 & 0 & 0 & 0 & 0 & 0 & 0}$ \\[1.67pt]
       $W_{3,d}$  & $\rowvector{0 & 0 & 0 & 0 & 0 & 0 & 1 & -1 & 0 & 0 & 0 & 0}$ \\[1.67pt]
       $W_{3,e}$  & $\rowvector{0 & 0 & 0 & 0 & 0 & 0 & 0 & 0 & 1 & -1 & 0 & 0}$ \\[1.67pt]
       $W_{3,f}$  & $\rowvector{0 & 0 & 0 & 0 & 0 & 0 & 0 & 0 & 0 & 0 & 1 & -1}$ \\
    \bottomrule
    \end{tabular}
    \captionof{table}{\textit{The seed vector $\nu$ for each irreducible $T$-module of Type IV. Note that $A \nu = E_4^*A\nu$.}}
\end{center}

With respect to each basis, the matrix representing $A$ is 
\begin{equation}
    A: \begin{bmatrix}
  0 & 2 & 0 & 0 & 0 \\
  1 & 0 & 2 & 0 & 0 \\
  0 & 1 & 0 & 0 & 2 \\
  0 & 0 & 0 & 0 & 1 \\
  0 & 0 & 1 & 2 & 0 \\
\end{bmatrix}.
\label{eq:t12v2-IV}
\end{equation}
This matrix has eigenvalues $\sqrt{6}, \sqrt{2}, 0, -\sqrt{2}, -\sqrt{6}$.

\subsection{Endpoint 4}

We now describe the Type V irreducible $T$-modules in our decomposition.
For Type V, the multiplicity is 8 and the modules are $W_{4,a}, \dots, W_{4,h}$.
For each module, our basis has the form $\{\nu, A\nu, E_5^*A^3\nu, E_6^*A^2\nu, E_6^*A^4\nu\}$, where the seed vector $\nu$ is given below.

{
\setlength{\arraycolsep}{3pt}
\begin{center}
    \begin{tabular}{c | c}
    \toprule
       \textbf{Module}  & \textbf{Essential part of }$\nu$ \\
       \midrule
       $W_{4,a}$ & $\rowvector{2 & -2 & -2 & 2 & 0 & 0 & 0 & 0 & -1 & 1 & 0 & 0 & 0 & 0 & 1 & -1 & 1 & -1 & 0 & 0 & 0 & 0 & 1 & -1}$ \\[1.67pt]
       $W_{4,b}$ & $\rowvector{0 & 0 & 0 & 0 & 2 & -2 & -2 & 2 & 0 & 0 & -1 & 1 & 1 & -1 & 0 & 0 & -1 & 1 & 0 & 0 & 0 & 0 & 1 & -1}$ \\[1.67pt]
       $W_{4,c}$ & $\rowvector{0 & 0 & 0 & 0 & 1 & -1 & 1 & -1 & 1 & -1 & 0 & 0 & 0 & 0 & 1 & -1 & 0 & 0 & 2 & -2 & 2 & -2 & 0 & 0}$ \\[1.67pt]
       $W_{4,d}$ & $\rowvector{1 & -1 & 1 & -1 & 0 & 0 & 0 & 0 & 0 & 0 & 2 & -2 & 2 & -2 & 0 & 0 & 0 & 0 & 1 & -1 & -1 & 1 & 0 & 0}$ \\[1.67pt]
       $W_{4,e}$ & $\rowvector{0 & 0 & 0 & 0 & 0 & 0 & 0 & 0 & 1 & -1 & 0 & 0 & 0 & 0 & -1 & 1 & 1 & -1 & 0 & 0 & 0 & 0 & 1 & -1}$ \\[1.67pt]
       $W_{4,f}$ & $\rowvector{0 & 0 & 0 & 0 & -1 & 1 & -1 & 1 & 1 & -1 & 0 & 0 & 0 & 0 & 1 & -1 & 0 & 0 & 0 & 0 & 0 & 0 & 0 & 0}$ \\[1.67pt]
       $W_{4,g}$ & $\rowvector{1 & -1 & 1 & -1 & 0 & 0 & 0 & 0 & 0 & 0 & 0 & 0 & 0 & 0 & 0 & 0 & 0 & 0 & -1 & 1 & 1 & -1 & 0 & 0}$ \\[1.67pt]
       $W_{4,h}$ & $\rowvector{0 & 0 & 0 & 0 & 0 & 0 & 0 & 0 & 0 & 0 & 1 & -1 & -1 & 1 & 0 & 0 & -1 & 1 & 0 & 0 & 0 & 0 & 1 & -1}$ \\
    \bottomrule
    \end{tabular}
    \captionof{table}{\textit{The seed vector $\nu$ for each irreducible $T$-module of Type V. Note that $A \nu = E_5^*A\nu$.}}
\end{center}
}

With respect to each basis, the matrix representing $A$ is 
\begin{equation}
    A: \begin{bmatrix}
 0 & 2 & 10 & 0 & 0 \\
 1 & 0 & 0 & -2 & -22 \\
 0 & 0 & 0 & 1 & 8 \\
 0 & 1 & 0 & 0 & 0 \\
 0 & 0 & 1 & 0 & 0 \\
\end{bmatrix}.
\label{eq:t12v2-V}
\end{equation}
This matrix has eigenvalues $\sqrt{6}, \sqrt{2}, 0, -\sqrt{2}, -\sqrt{6}$.

We now describe the Type VI irreducible $T$-modules in our decomposition.
For Type VI, the multiplicity is 4 and the modules are $W_{4,i}, W_{4,j}, W_{4,k}, W_{4,l}$.
For each module, our basis has the form $\{\nu, A\nu\}$, where the seed vector $\nu$ is given below.

{
\setlength{\arraycolsep}{3pt}
\begin{center}
    \begin{tabular}{c | c}
    \toprule
       \textbf{Module}  & \textbf{Essential part of }$\nu$ \\
       \midrule
       $W_{4,i}$ & $\rowvector{-1 & 1 & 1 & -1 & -1 & 1 & -1 & 1 & -2 & 2 & 0 & 0 & 0 & 0 & 0 & 0 & 1 & -1 & 1 & -1 & 1 & -1 & 1 & -1}$ \\[1.67pt]
       $W_{4,j}$ & $\rowvector{1 & -1 & 0 & 0 & -1 & 1 & 0 & 0 & 0 & 0 & -1 & 1 & 0 & 0 & -1 & 1 & -1 & 1 & 1 & -1 & 0 & 0 & 0 & 0}$ \\[1.67pt]
       $W_{4,k}$ & $\rowvector{1 & -1 & 2 & -2 & 1 & -1 & 0 & 0 & 0 & 0 & -1 & 1 & -2 & 2 & 1 & -1 & 1 & -1 & 1 & -1 & -2 & 2 & 0 & 0}$ \\[1.67pt]
       $W_{4,l}$ & $\rowvector{1 & -1 & -1 & 1 & 1 & -1 & -3 & 3 & 0 & 0 & 2 & -2 & -2 & 2 & -2 & 2 & 1 & -1 & 1 & -1 & 1 & -1 & -3 & 3}$ \\
    \bottomrule
    \end{tabular}
    \captionof{table}{\textit{The seed vector $\nu$ for each irreducible $T$-module of Type VI. Note that $A \nu = E_5^*A\nu$.}}
\end{center}
}

With respect to each basis, the matrix representing $A$ is 
\begin{equation}
    A: \begin{bmatrix}
  0 & 2 \\
  1 & 0 \end{bmatrix}.
\label{eq:t12v2-VI}
\end{equation}
This matrix has eigenvalues $\sqrt{2}, -\sqrt{2}$.

\subsection{Endpoint 5}

We now describe the Type VII irreducible $T$-modules in our decomposition.
For Type VII, the multiplicity is 3 and the modules are $W_{5,a}, W_{5,b}, W_{5,c}$.
For each module, our basis has the form $\{\nu, A\nu\}$, where the seed vector $\nu$ is given below.

\begin{center}
    \resizebox{\linewidth}{!}{%
    \begin{tabular}{c | c}
    \toprule
       \textbf{Module}  & \textbf{Essential part of }$\nu$ \\
       \midrule
       $W_{5,a}$ & $\smallrowvector{0 & 0 & 0 & 0 & 0 & 0 & 0 & 0 & 0 & 0 & 0 & 0 & 0 & 0 & 0 & 0 & 1 & -1 & 1 & -1 & -1 & 1 & -1 & 1 & 1 & -1 & 1 & -1 & 1 & -1 & -1 & 1 & -1 & 1 & 1 & -1 & 1 & -1 & 1 & -1 & -1 & 1 & -1 & 1 & -1 & 1 & -1 & 1}$ \\[1.67pt]
       $W_{5,b}$ & $\smallrowvector{1 & -1 & 1 & -1 & -1 & 1 & 1 & -1 & -1 & 1 & 1 & -1 & 1 & -1 & -1 & 1 & 0 & 0 & 0 & 0 & 0 & 0 & 0 & 0 & 0 & 0 & 0 & 0 & 0 & 0 & 0 & 0 & 0 & 0 & 0 & 0 & 0 & 0 & 0 & 0 & 0 & 0 & 0 & 0 & 0 & 0 & 0 & 0}$ \\[1.67pt]
       $W_{5,c}$ & $\smallrowvector{0 & 0 & 0 & 0 & 0 & 0 & 0 & 0 & 0 & 0 & 0 & 0 & 0 & 0 & 0 & 0 & 1 & -1 & 1 & -1 & -1 & 1 & -1 & 1 & 1 & -1 & 1 & -1 & 1 & -1 & -1 & 1 & 1 & -1 & -1 & 1 & -1 & 1 & -1 & 1 & 1 & -1 & 1 & -1 & 1 & -1 & 1 & -1}$ \\
    \bottomrule
    \end{tabular}}
    \captionof{table}{\textit{The seed vector $\nu$ for each irreducible $T$-module of Type VII. Note that $A \nu = E_6^*A\nu$.}}
\end{center}

With respect to each basis, the matrix representing $A$ is identical to \eqref{eq:t12v2-VI}.

We now describe the Type VIII irreducible $T$-modules in our decomposition.
For Type VIII, the multiplicity is 1 and the module is $W_{5,d}$.
For this module, our basis has the form $\{\nu, A\nu\}$, where the seed vector $\nu$ is given below.

\begin{center}
    \resizebox{\linewidth}{!}{%
    \begin{tabular}{c | c}
    \toprule
       \textbf{Module}  & \textbf{Essential part of }$\nu$ \\
       \midrule
       $W_{5,d}$ & $\smallrowvector{1 & -1 & 1 & -1 & -1 & 1 & 1 & -1 & 1 & -1 & -1 & 1 & -1 & 1 & 1 & -1 & -1 & 1 & -1 & 1 & 1 & -1 & 1 & -1 & 1 & -1 & 1 & -1 & 1 & -1 & -1 & 1 & -1 & 1 & 1 & -1 & 1 & -1 & 1 & -1 & 1 & -1 & 1 & -1 & 1 & -1 & 1 & -1}$ \\
    \bottomrule
    \end{tabular}}
    \captionof{table}{\textit{The seed vector $\nu$ for the irreducible $T$-module of Type VIII. Note that $A \nu = E_6^*A\nu$.}}
\end{center}

With respect to this basis, the matrix representing $A$ is 
\begin{equation}
    A: \begin{bmatrix}
  0 & 6 \\
  1 & 0 \end{bmatrix}.
\label{eq:t12v2-VIII}
\end{equation}
This matrix has eigenvalues $\sqrt{6}, -\sqrt{6}$.

We now describe the Type IX irreducible $T$-modules in our decomposition.
For Type IX, the multiplicity is 6 and the modules are $W_{5,e}, \dots, W_{5,j}$.
For each module, our basis has the form $\{\nu\}$, where the seed vector $\nu$ is given below.

\begin{center}
    \resizebox{\linewidth}{!}{%
    \begin{tabular}{c | c}
    \toprule
       \textbf{Module}  & \textbf{Essential part of }$\nu$ \\
       \midrule
       $W_{5,e}$ & $\smallrowvector{1 & -1 & 0 & 0 & 0 & 0 & 1 & -1 & 0 & 0 & -1 & 1 & -1 & 1 & 0 & 0 & 0 & 0 & 1 & -1 & -1 & 1 & 0 & 0 & 0 & 0 & -1 & 1 & -1 & 1 & 0 & 0 & 1 & -1 & 1 & -1 & 0 & 0 & 0 & 0 & 0 & 0 & 0 & 0 & 0 & 0 & 0 & 0}$ \\[1.67pt]
       $W_{5,f}$ & $\smallrowvector{1 & -1 & -1 & 1 & -1 & 1 & -1 & 1 & 1 & -1 & 1 & -1 & -1 & 1 & -1 & 1 & -1 & 1 & 1 & -1 & -1 & 1 & 1 & -1 & -1 & 1 & 1 & -1 & 1 & -1 & 1 & -1 & 0 & 0 & 0 & 0 & 0 & 0 & 0 & 0 & -2 & 2 & 2 & -2 & 0 & 0 & 0 & 0}$ \\[1.67pt]
       $W_{5,g}$ & $\smallrowvector{3&-3&-1&1&-3&3&-1&1&3&-3&1&-1&5&-5&7&-7&7&-7&3&-3&-3&3&-7&7&-11&11&1&-1&-7&7&3&-3&-8&8&0&0&8&-8&0&0&4&-4&4&-4&0&0&8&-8}$ \\[1.67pt]
       $W_{5,h}$ & $\smallrowvector{2 & -2 & -5 & 5 & 0 & 0 & -3 & 3 & -5 & 5 & -2 & 2 & 3 & -3 & 0 & 0 & 0 & 0 & -3 & 3 & -2 & 2 & 5 & -5 & 0 & 0 & 3 & -3 & -2 & 2 & -5 & 5 & 2 & -2 & 2 & -2 & 5 & -5 & -5 & 5 & 0 & 0 & 0 & 0 & 0 & 0 & 0 & 0}$ \\[1.67pt]
       $W_{5,i}$ & $\smallrowvector{9 & -9 & 23 & -23 & -35 & 35 & -3 & 3 & 9 & -9 & -23 & 23 & -11 & 11 & 21 & -21 & 21 & -21 & -17 & 17 & -9 & 9 & 5 & -5 & -7 & 7 & 3 & -3 & 5 & -5 & 9 & -9 & 2 & -2 & -26 & 26 & -2 & 2 & -26 & 26 & -14 & 14 & -14 & 14 & 0 & 0 & -28 & 28}$ \\[1.67pt]
       $W_{5,j}$ & $\smallrowvector{3 & -3 & -1 & 1 & 1 & -1 & 3 & -3 & 3 & -3 & 1 & -1 & 1 & -1 & 3 & -3 & -3 & 3 & 1 & -1 & 3 & -3 & -1 & 1 & -1 & 1 & 3 & -3 & -1 & 1 & -3 & 3 & 2 & -2 & -2 & 2 & -2 & 2 & -2 & 2 & -2 & 2 & -2 & 2 & -6 & 6 & 2 & -2}$ \\
    \bottomrule
    \end{tabular}}
    \captionof{table}{\textit{The seed vector $\nu$ for each irreducible $T$-module of Type IX.}}
\end{center}

With respect to each basis, the matrix representing $A$ is
\begin{equation}
  A: \begin{bmatrix}
  0
\end{bmatrix}.
\label{eq:t12v2-IX}
\end{equation}
This matrix has eigenvalue $0$.

\subsection{Endpoint 6}

We now describe the Type X irreducible $T$-modules in our decomposition.
For Type X, the multiplicity is 2 and the modules are $W_{6,a}$ and $W_{6,b}$.
For each module, our basis has the form $\{\nu\}$, where the seed vector $\nu$ is given below.

\begin{center}
    \begin{tabular}{c | c}
    \toprule
       \textbf{Module}  & \textbf{Essential part of }$\nu$ \\
       \midrule
       $W_{6,a}$  & $\smallrowvector{1 & -1 & 0 & 0 & -1 & 1 & 0 & 0 & 0 & 0 & -1 & 1 & 1 & -1 & 0 & 0 & -1 & 1 & 0 & 0 & 0 & 0 & 1 & -1 & 0 & 0 & -1 & 1 & -1 & 1 & 0 & 0}$ \\[1.67pt]
       $W_{6,b}$  & $\smallrowvector{0 & 0 & 1 & -1 & 0 & 0 & -1 & 1 & 1 & -1 & 0 & 0 & 0 & 0 & -1 & 1 & 0 & 0 & -1 & 1 & -1 & 1 & 0 & 0 & 1 & -1 & 0 & 0 & 0 & 0 & -1 & 1}$ \\
    \bottomrule
    \end{tabular}
    \captionof{table}{\textit{The seed vector $\nu$ for each irreducible $T$-module of Type X.}}
\end{center}

With respect to this basis, the matrix representing $A$ is identical to \eqref{eq:t12v2-IX}.


\renewcommand{\GraphNumber}{12}
\renewcommand{\GraphTitle}{Biggs-Smith Graph}
\renewcommand{\VertexCount}{102}
\renewcommand{\IntersectionArray}{\{3,2,2,2,1,1,1;1,1,1,1,1,1,3\}}
\renewcommand{\DateString}{2026.04.12 (orig. 2026.03)}

\section{Biggs-Smith Graph}
Throughout this section, we take $\Gamma$ to be the Biggs-Smith Graph. $\Gamma$ has 102 vertices and intersection array $\{3,2,2,2,1,1,1;1,1,1,1,1,1,3\}$.
\begin{center}
    \framebox[0.97\linewidth]{\includegraphics[width=0.99\linewidth]{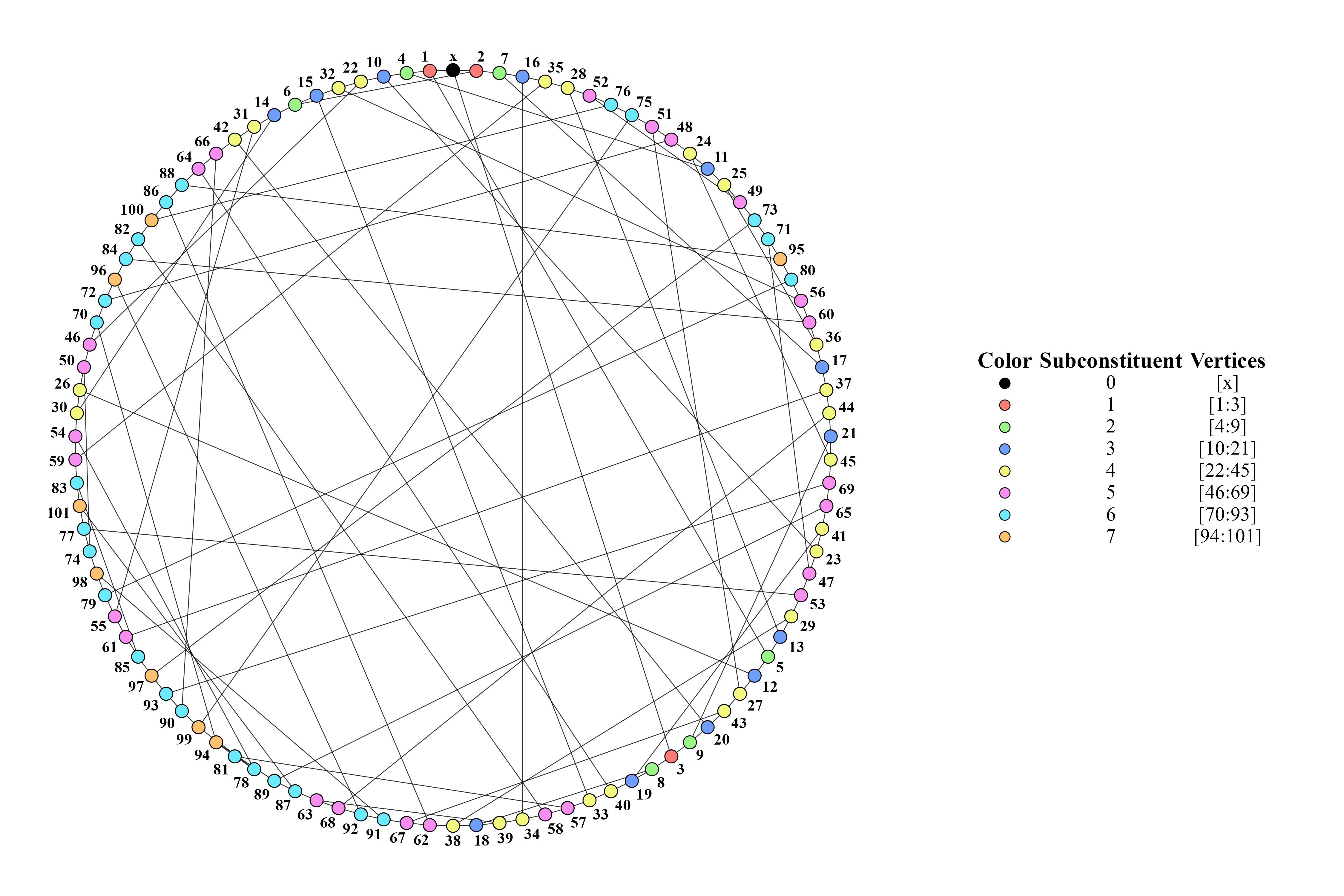}}
    \captionof{figure}{\textit{The Biggs-Smith Graph.}}
  \end{center}
    
The spectrum of the adjacency matrix $A$ is
    $$ 3^{1}\left(\frac{1+\sqrt{17}}{2}\right)^{9} 2^{18} (\theta_1)^{16} 0^{17} (\theta_2)^{16} \left(\frac{1-\sqrt{17}}{2}\right)^{9} (\theta_3)^{16},$$
    where $\theta_1, \theta_2, \theta_3$ are the roots of the polynomial $\theta^3 + 3\theta^2 - 3$.

    The exact values and approximations for $\theta_1, \theta_2,\theta_3$ are
\begin{equation}
\begin{split}
    \theta_1 &= 2\cos(\pi/9) - 1 \approx 0.879, \quad\quad \theta_2 = 2\cos(13\pi/9) - 1 \approx -1.347, \\
    \theta_3 &= 2\cos(7\pi/9) - 1 \approx -2.532.
\end{split}
\label{eq:thetas}
\end{equation}

We decompose the standard module $V$ of $\Gamma$ into an orthogonal direct sum of irreducible $T$-modules. Our decomposition has the form
\[ V = W_0 \oplus W_{1,a} \oplus W_{1,b} \oplus W_{2,a} \oplus W_{2,b} \oplus W_{2,c} \oplus W_{3,a} \oplus W_{3,b} \oplus W_{3,c} \oplus W_4. \]
For each irreducible $T$-module in this decomposition, we give the endpoint, the multiplicity, the dimension, the diameter, the shape, the action of $A$ upon an appropriate pure basis, and the isomorphism type.





\begin{center}
\resizebox{\linewidth}{!}{%
\begin{tabular}{c | c c c c c c c}
\toprule
\textbf{Irred. $T$-modules} & \textbf{Endpt.} & \textbf{Mult.} & \textbf{Dim.} & \textbf{Diam.} & \textbf{Shape} & $A$ \textbf{action} & \textbf{Iso. type} \\
\midrule
$W_0$ & 0 & 1 & 8 & 7 & $(1,1,1,1,1,1,1,1)$ & \eqref{eq:bs-I} & I \\
$W_{1,a}, W_{1,b}$ & 1 & 2 & 9 & 5 & $(1,1,1,2,2,2)$ & \eqref{eq:bs-II} & II \\
$W_{2,a}, W_{2,b}, W_{2,c}$ & 2 & 3 & 13 & 5 & $(1,2,3,3,3,1)$ & \eqref{eq:bs-III} & III \\
$W_{3,a}, W_{3,b}, W_{3,c}$ & 3 & 3 & 11 & 4 & $(1,3,3,3,1)$ & \eqref{eq:bs-IV} & IV \\
$W_{4}$ & 4 & 1 & 4 & 3 & $(1,1,1,1)$ & \eqref{eq:bs-V} & V \\
\bottomrule
\end{tabular}
}
\smallskip
\captionof{table}{\textit{Irreducible $T$-modules for the Biggs-Smith Graph} \\
$(\dim T=8^2+9^2+13^2+11^2+4^2=451)$}
\end{center}

For each irreducible $T$-module in the table above, we now give a pure basis and the matrix representing $A$ on that basis. 

\subsection{Endpoint 0}

We now describe the primary irreducible $T$-module $W_0$. The module $W_0$ has a basis $\{e_i^*\}_{i=0}^7$, where $e_i^*$ is from Definition \ref{def:primary}. With respect to this basis the matrix representing $A$ is
\begin{equation}
    A: \begin{bmatrix}
  0 & 3 & 0 & 0 & 0 & 0 & 0 & 0 \\
  1 & 0 & 2 & 0 & 0 & 0 & 0 & 0 \\
  0 & 1 & 0 & 2 & 0 & 0 & 0 & 0 \\
  0 & 0 & 1 & 0 & 2 & 0 & 0 & 0 \\
  0 & 0 & 0 & 1 & 1 & 1 & 0 & 0 \\
  0 & 0 & 0 & 0 & 1 & 1 & 1 & 0 \\
  0 & 0 & 0 & 0 & 0 & 1 & 1 & 1 \\
  0 & 0 & 0 & 0 & 0 & 0 & 3 & 0 \end{bmatrix}.
\label{eq:bs-I}
\end{equation}
This matrix has eigenvalues $3, \frac{1+\sqrt{17}}{2}, 2, \theta_1, 0, \theta_2, \frac{1-\sqrt{17}}{2}, \theta_3$, where $\theta_1, \theta_2, \theta_3 $ are defined in \eqref{eq:thetas}.

\subsection{Endpoint 1}

We now describe the Type II irreducible $T$-modules in our decomposition. For Type II, the multiplicity is 2 and the modules are $W_{1,a}$ and $W_{1,b}$. For each module, our basis has the form 
\begin{equation*}
    \{\nu, A\nu, E_3^*A^2\nu, E_4^*A^3\nu, E_4^*A^4\nu, E_5^*A^4\nu, E_5^*A^5\nu, E_6^*A^5\nu, E_6^*A^6\nu\},
\end{equation*} 
where the seed vector $\nu$ is given below.
\begin{center}
    \begin{tabular}{c | c}
    \toprule
       \textbf{Module}  & \textbf {Essential part of }$\nu$ \\
       \midrule
       $W_{1,a}$  & $\rowvector{1 & -1 & 0}$ \\[1.67pt]
       $W_{1,b}$  & $\rowvector{1 & 1 & -2}$ \\
    \bottomrule
    \end{tabular}
    \captionof{table}{\textit{The seed vector $\nu$ for each irreducible $T$-module of Type II. Note that $A\nu=E_2^*A\nu$.}}
\end{center}

With respect to each basis, the matrix representing $A$ is 
\begin{equation}
    A: \begin{bmatrix}
  0 & 2 & 0 & 0 & 0 & 0 & 0 & 0 & 0 \\
  1 & 0 & 2 & 0 & 0 & 0 & 0 & 0 & 0 \\
  0 & 1 & 0 & 2 & -1 & 0 & 0 & 0 & 0 \\
  0 & 0 & 1 & 0 & 1 & 1 & 1 & 0 & 0 \\
  0 & 0 & 0 & 1 & 0 & 0 & 1 & 0 & 0 \\
  0 & 0 & 0 & 1 & -1 & 1 & 0 & 1 & 1 \\
  0 & 0 & 0 & 0 & 1 & 0 & 1 & 0 & 1\\
  0 & 0 & 0 & 0 & 0 & 1 & -1 & 1 & 3 \\
  0 & 0 & 0 & 0 & 0 & 0 & 1 & 0 & -1 \end{bmatrix}.
\label{eq:bs-II}
\end{equation}

This matrix has eigenvalues $\frac{1+\sqrt{17}}{2}$, $2$, $2$, $\theta_1$, $0$, $0$, $\theta_2$, $\frac{1-\sqrt{17}}{2}$, $\theta_3$, where $\theta_1, \theta_2, \theta_3 $ are defined in \eqref{eq:thetas}.

\subsection{Endpoint 2}

We now describe the Type III irreducible $T$-modules in our decomposition. For Type III, the multiplicity is 3 and the modules are $W_{2,a},W_{2,b},W_{2,c}$. For each module, our basis has the form 

\vspace{-0.6cm}
\begin{align*}
    \{&\nu, A\nu, E_3^*A^4\nu, E_4^*A^2\nu, E_4^*A^3\nu,
    E_4^*A^6\nu - 36E_4^*A^2\nu, E_5^*A^3\nu, E_5^*A^4\nu + E_5^*A^3\nu, \\ 
    &E_5^*A^6\nu - 10E_5^*A^4\nu-E_5^*A^3\nu, E_6^*A^4\nu, E_6^*A^5\nu,
    E_6^*A(E_5^*A^6\nu - 10E_5^*A^4\nu-E_5^*A^3\nu), E_7^*A^5\nu\},
\end{align*} 

where the seed vector $\nu$ is given below.
\begin{center}
    \begin{tabular}{c | c}
    \toprule
       \textbf{Module}  & \textbf {Essential part of }$\nu$ \\
       \midrule
       $W_{2,a}$  & $\rowvector{1 & -1 & 0 & 0 & 0 & 0}$ \\[1.67pt]
       $W_{2,b}$  & $\rowvector{0 & 0 & 1 & -1 & 0 & 0}$ \\[1.67pt]
       $W_{2,c}$  & $\rowvector{0 & 0 & 0 & 0 & 1 & -1}$ \\
    \bottomrule
   \end{tabular}
    \captionof{table}{\textit{The seed vector $\nu$ for each irreducible $T$-module of Type III. Note that $A\nu=E_3^*A\nu$.}}
\end{center}

With respect to each basis, the matrix representing $A$ is 
\begin{equation}
    A: \begin{bmatrix}
  0 & 2 & 0 & 0 & 0 & 0 & 0 & 0 & 0 & 0 & 0 & 0 & 0 \\
  1 & 0 & 0 & 2 & 0 & 0 & 0 & 0 & 0 & 0 & 0 & 0 & 0 \\
  0 & 0 & 0 & 0 & 1 & 1 & 0 & 0 & 0 & 0 & 0 & 0 & 0 \\
  0 & 1 & 0 & 0 & 1 & 0 & 1 & 0 & 0 & 0 & 0 & 0 & 0 \\
  0 & 0 & 1 & 1 & 0 & 0 & 0 & 1 & 0 & 0 & 0 & 0 & 0 \\
  0 & 0 & 1 & 0 & 0 & 1 & 0 & 0 & 1 & 0 & 0 & 0 & 0 \\
  0 & 0 & 0 & 1 & 0 & 0 & -1 & 0 & 0 & 1 & 0 & 0 & 0 \\
  0 & 0 & 0 & 0 & 1 & 0 & 0 & 1 & 0 & 0 & 1 & 0 & 0 \\
  0 & 0 & 0 & 0 & 0 & 1 & 0 & 0 & -1 & 0 & 0 & 1 & 0 \\
  0 & 0 & 0 & 0 & 0 & 0 & 1 & 0 & 0 & 1 & 0 & 0 & 1 \\
  0 & 0 & 0 & 0 & 0 & 0 & 0 & 1 & 0 & 0 & 0 & -1 & 1 \\
  0 & 0 & 0 & 0 & 0 & 0 & 0 & 0 & 1 & 0 & -1 & 0 & 1 \\
  0 & 0 & 0 & 0 & 0 & 0 & 0 & 0 & 0 & 1 & 1 & 1 & 0 \end{bmatrix}.
\label{eq:bs-III}
\end{equation}

This matrix has eigenvalues $\frac{1+\sqrt{17}}{2}$, $2$, $2$, $2$, $\theta_1$, $\theta_1$, $0$, $0$, $\theta_2$, $\theta_2$, $\frac{1-\sqrt{17}}{2}$, $\theta_3$, $\theta_3$, where $\theta_1, \theta_2, \theta_3 $ are defined in \eqref{eq:thetas}.

\subsection{Endpoint 3}

We now describe the Type IV irreducible $T$-modules in our decomposition. For Type IV, the multiplicity is 3 and the modules are $W_{3,a},W_{3,b},W_{3,c}$. For each module, our basis has the form $\{v_1, v_2, \dots, v_{11}\}$, where 

\vspace{-0.7cm}
\begin{multicols}{2}
\begin{align*}
v_1 &= \nu, \\
v_2 &= A\nu - \tfrac{5}{2}E_4^*A^2\nu + \tfrac{1}{2}E_4^*A^4\nu, \\
v_3 &= \tfrac{1}{2}A\nu - \tfrac{7}{4}E_4^*A^2\nu + \tfrac{1}{4}E_4^*A^4\nu, \\
v_4 &= \tfrac{1}{2}A\nu - \tfrac{17}{4}E_4^*A^2\nu + \tfrac{3}{4}E_4^*A^4\nu, \\
v_5 &= -\tfrac{5}{2}E_5^*A^2\nu + \tfrac{1}{2}E_5^*A^3\nu + \tfrac{1}{2}E_5^*A^4\nu, \\
v_6 &= -\tfrac{17}{4}E_5^*A^2\nu + \tfrac{1}{4}E_5^*A^3\nu + \tfrac{3}{4}E_5^*A^4\nu,
\end{align*}
 
\columnbreak
 
\begin{align*}
v_7 &= -\tfrac{31}{4}E_5^*A^2\nu + \tfrac{3}{4}E_5^*A^3\nu + \tfrac{5}{4}E_5^*A^4\nu, \\
v_8 &= -\tfrac{1}{8}E_6^*A^3\nu + \tfrac{1}{4}E_6^*A^4\nu + \tfrac{1}{8}E_6^*A^5\nu, \\
v_9 &= -\tfrac{19}{16}E_6^*A^3\nu - \tfrac{1}{8}E_6^*A^4\nu + \tfrac{3}{16}E_6^*A^5\nu, \\
v_{10} &= -\tfrac{37}{16}E_6^*A^3\nu + \tfrac{1}{8}E_6^*A^4\nu + \tfrac{5}{16}E_6^*A^5\nu, \\
v_{11} &= \tfrac{1}{4}E_7^*A^5\nu.
\end{align*}
\end{multicols}
The seed vector $\nu$ is given below.

\begin{center}
    \begin{tabular}{c | c}
    \toprule
       \textbf{Module}  & \textbf {Essential part of }$\nu$ \\
       \midrule
       $W_{3,a}$  & $\rowvector{2 & -2 & -2 & 2 & 2 & -2 & -2 & 2 & 0 & 0 & 0 & 0}$ \\[1.67pt] 
       $W_{3,b}$  & $\rowvector{1 & -1 & 1 & -1 & -1 & 1 & -1 & 1 & 2 & -2 & 0 & 0}$ \\[1.67pt]
       $W_{3,c}$ & $\rowvector{1 & -1 & 1 & -1 & 1 & -1 & 1 & -1 & 0 & 0 & -2 & 2}$ \\
    \bottomrule
    \end{tabular}
    \captionof{table}{\textit{The seed vector $\nu$ for each irreducible $T$-module of Type IV. Note that $A\nu=E_4^*A \nu$.}}
\end{center}

With respect to each basis, the matrix representing $A$ is 
\begin{equation}
    A: \begin{bmatrix}
  0 & 1 & 1 & 0 & 0 & 0 & 0 & 0 & 0 & 0 & 0 \\
  1 & 1 & 0 & 0 & 1 & 0 & 0 & 0 & 0 & 0 & 0 \\
  1 & -1 & -1 & 0 & 0 & 1 & 0 & 0 & 0 & 0 & 0 \\
  -1 & -1 & 0 & -1 & 0 & 0 & 1 & 0 & 0 & 0 & 0 \\
  0 & 1 & 0 & 0 & 1 & 1 & 1 & 1 & 0 & 0 & 0 \\
  0 & 0 & 1 & 0 & 0 & -1 & 0 & 0 & 1 & 0 & 0 \\
  0 & 0 & 0 & 1 & 0 & 0 & -1 & 0 & 0 & 1 & 0 \\
  0 & 0 & 0 & 0 & 1 & 0 & 0 & 0 & 0 & 1 & 0 \\
  0 & 0 & 0 & 0 & 0 & 1 & 0 & -1 & -1 & -1 & 0 \\
  0 & 0 & 0 & 0 & 0 & 0 & 1 & 1 & 0 & 0 & 1 \\
  0 & 0 & 0 & 0 & 0 & 0 & 0 & 2 & 1 & 3 & 0 \end{bmatrix}.
\label{eq:bs-IV}
\end{equation}

This matrix has eigenvalues $\frac{1+\sqrt{17}}{2}$, $2$, $\theta_1$, $\theta_1$, $0$, $0$, $\theta_2$, $\theta_2$,$\frac{1-\sqrt{17}}{2}$, $\theta_3$, $\theta_3$, where $\theta_1, \theta_2, \theta_3 $ are defined in \eqref{eq:thetas}.

\subsection{Endpoint 4}

We now describe the Type V irreducible $T$-modules in our decomposition. For Type V, the multiplicity is 1 and the module is  $W_4$. For this module, our basis has the form $\{\nu, E_5^*A\nu, E_6^*A^2\nu, E_7^*A^3\nu\}$, where the seed vector $\nu$ is given below.
\begin{center}
    \begin{tabular}{c | c}
    \toprule
       \textbf{Module}  & \textbf {Essential part of }$\nu$ \\
       \midrule
       $W_{4}$  & $\smallrowvector{1 & -1 & -1 & 1 & 1 & -1 & 1 & -1 & -1 & 1 & -1 & 1 & 1 & -1 & -1 & 1 & 1 & -1 & -1 & 1 & -1 & 1 & -1 & 1}$ \\
    \bottomrule
    \end{tabular}
    \captionof{table}{\textit{The seed vector $\nu$ for the irreducible $T$-module of Type V.}}
\end{center}

With respect to this basis, the matrix representing $A$ is
\begin{equation}
    A: \begin{bmatrix}
  -1 & 1 & 0 & 0 \\
  1 & 1 & 1 & 0 \\
  0 & 1 & -1 & 3 \\
  0 & 0 & 1 & 0 \end{bmatrix}.
\label{eq:bs-V}
\end{equation}

This matrix has eigenvalues $2,\theta_1, \theta_2, \theta_3$, where $\theta_1, \theta_2, \theta_3 $ are defined in \eqref{eq:thetas}.


\renewcommand{\GraphNumber}{13}
\renewcommand{\GraphTitle}{Foster Graph}
\renewcommand{\VertexCount}{90}
\renewcommand{\IntersectionArray}{\{3,2,2,2,2,1,1,1;1,1,1,1,2,2,2,3\}}
\renewcommand{\DateString}{2026.04.12 (orig. 2026.01)}

\section{Foster Graph}\label{sec:foster}
Throughout this section, we take $\Gamma$ to be the Foster Graph. $\Gamma$ has 90 vertices and intersection array $\{3,2,2,2,2,1,1,1;1,1,1,1,2,2,2,3\}$.
\begin{center}
    \framebox[0.97\linewidth]{\includegraphics[width=0.95\linewidth]{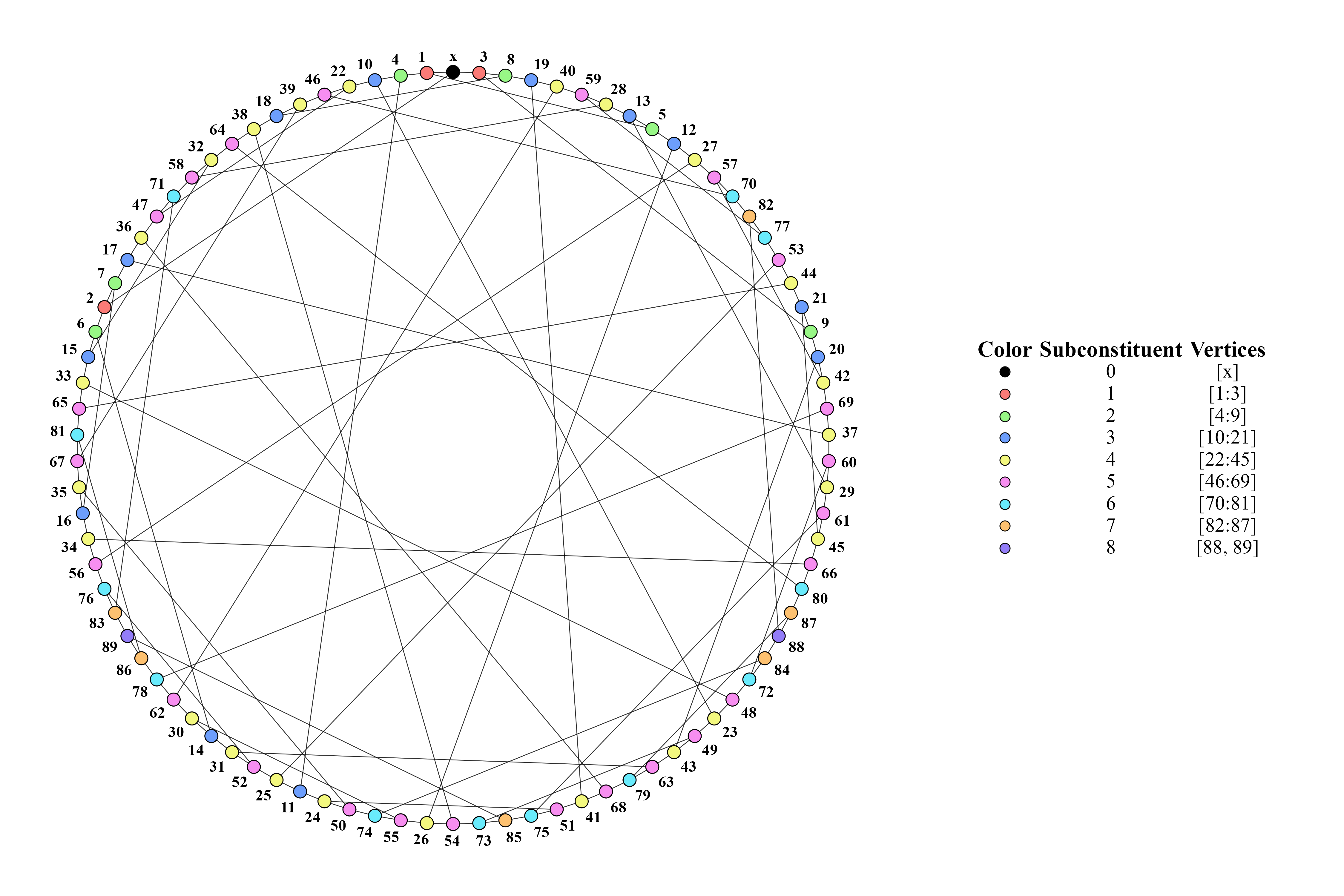}}
    \captionof{figure}{\textit{The Foster Graph.}}
    \end{center}

The spectrum of the adjacency matrix $A$ is $$ 3^1(\sqrt6)^{12}2^91^{18}0^{10}(-1)^{18}(-2)^9(-\sqrt6)^{12}(-3)^1 .$$
We decompose the standard module $V$ of $\Gamma$ into an orthogonal direct sum of irreducible $T$-modules. Our decomposition has the form
\[ V = W_0 \oplus W_{1,a} \oplus W_{1,b} \oplus W_{2,a} \oplus W_{2,b} \oplus W_{2,c} \oplus W_{3,a} \oplus \dots \oplus W_{3,f} \oplus W_{4,a} \oplus \dots \oplus W_{4,f} \oplus W_5. \]
For each irreducible $T$-module in this decomposition, we give the endpoint, the multiplicity, the dimension, the diameter, the shape, the action of $A$ upon an appropriate pure basis, and the isomorphism type.





\begin{center}
\resizebox{\linewidth}{!}{%
    \begin{tabular}{c|c c c c c c c}
\toprule
\textbf{Irred. $T$-modules} & \textbf{Endpt.} & \textbf{Mult.} & \textbf{Dim.} & \textbf{Diam.} & \textbf{Shape} & $A$ \textbf{action} & \textbf{Iso. type} \\
\midrule
$W_0$ & 0 & 1 & 9 & 8 & $(1,1,1,1,1,1,1,1,1)$ & \eqref{eq:fos-I} & I \\
$W_{1,a}, W_{1,b}$ & 1 & 2 & 7 & 6 & $(1,1,1,1,1,1,1)$ & \eqref{eq:fos-II} & II \\
$W_{2,a}, W_{2,b}, W_{2,c}$ & 2 & 3 & 7 & 4 & $(1,1,2,2,1)$ & \eqref{eq:fos-III} & III \\
$W_{3,a}, W_{3,b}, W_{3,c}$ & 3 & 3 & 4 & 2 & $(1,2,1)$ & \eqref{eq:fos-IV} & IV \\
$W_{3,d}, W_{3,e}, W_{3,f}$ & 3 & 3 & 5 & 3 & $(1,1,2,1)$ & \eqref{eq:fos-V} & V \\
$W_{4,a}$ & 4 & 1 & 1 & 0 & $(1)$ & \eqref{eq:fos-VI} & VI \\
$W_{4,b}, W_{4,c}, W_{4,d}$ & 4 & 3 & 2 & 1 & $(1,1)$ & \eqref{eq:fos-VII} & VII\\
$W_{4,e}, W_{4,f}$ & 4 & 2 & 4 & 3 & $(1,1,1,1)$ & \eqref{eq:fos-VIII} & VIII \\
$W_{5}$ & 5 & 1 & 4 & 3 & $(1,1,1,1)$ & \eqref{eq:fos-IX} & IX \\
\bottomrule
    \end{tabular}
}

\smallskip
\captionof{table}{\textit{Irreducible $T$-modules for the Foster Graph} \\
$(\dim T =9^2+7^2+7^2+4^2+5^2+1^2+2^2+4^2+4^2=257)$}
\end{center}

For each irreducible $T$-module in the table above, we now give a pure basis and the matrix representing $A$ on that basis. 

\subsection{Endpoint 0}

We now describe the primary irreducible $T$-module $W_0$. The module $W_0$ has a basis $\{e_i^*\}_{i=0}^8$, where $e_i^*$ is from Definition \ref{def:primary}. With respect to this basis the matrix representing $A$ is
\begin{equation}
    A: \begin{bmatrix}
  0 & 3 & 0 & 0 & 0 & 0 & 0 & 0 & 0 \\
  1 & 0 & 2 & 0 & 0 & 0 & 0 & 0 & 0 \\
  0 & 1 & 0 & 2 & 0 & 0 & 0 & 0 & 0 \\
  0 & 0 & 1 & 0 & 2 & 0 & 0 & 0 & 0 \\
  0 & 0 & 0 & 1 & 0 & 2 & 0 & 0 & 0 \\
  0 & 0 & 0 & 0 & 2 & 0 & 1 & 0 & 0 \\
  0 & 0 & 0 & 0 & 0 & 2 & 0 & 1 & 0 \\
  0 & 0 & 0 & 0 & 0 & 0 & 2 & 0 & 1 \\
  0 & 0 & 0 & 0 & 0 & 0 & 0 & 3 & 0 \end{bmatrix}.
\label{eq:fos-I}
\end{equation}
This matrix has eigenvalues $3, \sqrt{6}, 2, 1, 0, -1, -2, -\sqrt{6}, -3$.

\subsection{Endpoint 1}

We now describe the Type II irreducible $T$-modules in our decomposition. For Type II, the multiplicity is 2 and the modules are $W_{1,a}$ and $W_{1,b}$. For each module, our basis has the form 
\begin{equation*}
    \{\nu, A\nu, E_3^*A^2\nu, E_4^*A^3\nu, E_5^*A^4\nu, E_6^*A^5\nu, E_7^*A^6\nu\},
\end{equation*} 
where the seed vector $\nu$ is given below.
\begin{center}
    \begin{tabular}{c | c}
    \toprule
       \textbf{Module}  & \textbf {Essential part of }$\nu$ \\
       \midrule
       $W_{1,a}$  & $\rowvector{1 & -1 & 0}$ \\[1.67pt]
       $W_{1,b}$  & $\rowvector{1 & 1 & -2}$ \\
    \bottomrule
    \end{tabular}
    \captionof{table}{\textit{The seed vector $\nu$ for each irreducible $T$-module of Type II. Note that $A\nu=E_2^*A\nu$.}}
\end{center}

With respect to each basis, the matrix representing $A$ is 
\begin{equation}
    A: \begin{bmatrix}
  0 & 2 & 0 & 0 & 0 & 0 & 0 \\
  1 & 0 & 2 & 0 & 0 & 0 & 0 \\
  0 & 1 & 0 & 2 & 0 & 0 & 0 \\
  0 & 0 & 1 & 0 & 1 & 0 & 0 \\
  0 & 0 & 0 & 1 & 0 & 2 & 0 \\
  0 & 0 & 0 & 0 & 1 & 0 & 2 \\
  0 & 0 & 0 & 0 & 0 & 1 & 0 \end{bmatrix}.
\label{eq:fos-II}
\end{equation}

This matrix has eigenvalues $\sqrt{6}, 2, 1, 0, -1, -2, -\sqrt{6}$.

\subsection{Endpoint 2}

We now describe the Type III irreducible $T$-modules in our decomposition. For Type III, the multiplicity is 3 and the modules are $W_{2,a},W_{2,b},W_{2,c}$. For each module, our basis has the form 
\begin{equation*}
    \{\nu, A\nu, E_4^*A^2\nu,  E_4^*A^4\nu - 6E_4^*A^2\nu, E_5^*A^3\nu,E_5^*A^5\nu - 7E_5^*A^3\nu, E_6^*A^6\nu\},
\end{equation*} 
where the seed vector $\nu$ is given below.
\begin{center}
    \begin{tabular}{c | c}
    \toprule
       \textbf{Module}  & \textbf {Essential part of }$\nu$ \\
       \midrule
       $W_{2,a}$  & $\rowvector{1 & -1 & 0 & 0 & 0 & 0}$ \\[1.67pt]
       $W_{2,b}$  & $\rowvector{0 & 0 & 1 & -1 & 0 & 0}$ \\[1.67pt]
       $W_{2,c}$  & $\rowvector{0 & 0 & 0 & 0 & 1 & -1}$ \\
    \bottomrule
   \end{tabular}
    \captionof{table}{\textit{The seed vector $\nu$ for each irreducible $T$-module of Type III. Note that $A\nu=E_3^*A\nu$.}}
\end{center}

With respect to each basis, the matrix representing $A$ is 
\begin{equation}
    A: \begin{bmatrix}
  0 & 2 & 0 & 0 & 0 & 0 & 0 \\
  1 & 0 & 2 & 0 & 0 & 0 & 0 \\
  0 & 1 & 0 & 0 & 2 & 0 & 0 \\
  0 & 0 & 0 & 0 & 1 & 2 & 0 \\
  0 & 0 & 1 & 1 & 0 & 0 & 0 \\
  0 & 0 & 0 & 1 & 0 & 0 & 2 \\
  0 & 0 & 0 & 0 & 0 & 1 & 0 \end{bmatrix}.
\label{eq:fos-III}
\end{equation}
This matrix has eigenvalues $\sqrt{6}, 2, 1, 0, -1, -2, -\sqrt{6}$.

\subsection{Endpoint 3}

We now describe the Type IV irreducible $T$-modules in our decomposition. For Type IV, the multiplicity is 3 and the modules are $W_{3,a},W_{3,b},W_{3,c}$. For each module, our basis has the form 
\begin{equation*}
    \{\nu, A\nu, E_4^*A^3\nu, E_5^*A^2\nu\},
\end{equation*} 
where the seed vector $\nu$ is given below.
\begin{center}
    \begin{tabular}{c | c}
    \toprule
       \textbf{Module}  & \textbf {Essential part of }$\nu$ \\
       \midrule
       $W_{3,a}$  & $\rowvector{0 & 0 & 0 & 0 & 1 & -1 & -1 & 1 & 1 & -1 & -1 & 1}$ \\[1.67pt]
       $W_{3,b}$  & $\rowvector{2 & -2 & 0 & 0 & 1 & -1 & 1 & -1 & -1 & 1 & -1 & 1}$ \\[1.67pt]
      $W_{3,c}$  & $\rowvector{0 & 0 & 2 & -2 & -1 & 1 & -1 & 1 & -1 & 1 & -1 & 1}$ \\
    \bottomrule
    \end{tabular}
    \captionof{table}{\textit{The seed vector $\nu$ for each irreducible $T$-module of Type IV. Note that $A\nu=E_4^*A\nu$.}}
\end{center}

With respect to each basis, the matrix representing $A$ is 
\begin{equation}
    A: \begin{bmatrix}
  0 & 2 & 6 & 0 \\
  1 & 0 & 0 & -2 \\
  0 & 0 & 0 & 1\\
  0 & 1 & 5 & 0 \end{bmatrix}.
\label{eq:fos-IV}
\end{equation}

This matrix has eigenvalues $2, 1, -1, -2$.

We now describe the Type V irreducible $T$-modules in our decomposition. For Type V, the multiplicity is 3 and the modules are $W_{3,d},W_{3,e},W_{3,f}$. For each module, our basis has the form 
\begin{equation*}
    \{\nu, A\nu, E_5^*A^2\nu, E_5^*A^4\nu, E_6^*A^3\nu\},
\end{equation*}
where the seed vector $\nu$ is given below.
\begin{center}
    \begin{tabular}{c | c}
    \toprule
       \textbf{Module}  & \textbf {Essential part of }$\nu$ \\
       \midrule
       $W_{3,d}$  & $\rowvector{0 & 0 & 0 & 0 & 1 & -1 & -1 & 1 & -1 & 1 & 1 & -1}$ \\[1.67pt]
       $W_{3,e}$  & $\rowvector{1 & -1 & -1 & 1 & -1 & 1 & -1 & 1 & 0 & 0 & 0 & 0}$ \\[1.67pt]
      $W_{3,f}$  & $\rowvector{1 & -1 & 1 & -1 & 0 & 0 & 0 & 0 & 1 & -1 & 1 & -1}$ \\
    \bottomrule
    \end{tabular}
    \captionof{table}{\textit{The seed vector $\nu$ for each irreducible $T$-module of Type V. Note that $A\nu=E_4^*A\nu$.}}
\end{center}

With respect to each basis, the matrix representing $A$ is 
\begin{equation}
    A: \begin{bmatrix}
  0 & 2 & 0 & 0 & 0 \\
  1 & 0 & 3 & 19 & 0 \\
  0 & 1 & 0 & 0 & -5 \\
  0 & 0 & 0 & 0 & 1 \\
  0 & 0 & 1 & 7 & 0 \end{bmatrix}.
\label{eq:fos-V}
\end{equation}

This matrix has eigenvalues $\sqrt{6}, 1, 0, -1, -\sqrt{6}$.

\subsection{Endpoint 4}

We now describe the Type VI irreducible $T$-modules in our decomposition. For Type VI, the multiplicity is 1 and the module is  $W_{4,a}$. For this module, our basis has the form $\{\nu\}$, where the seed vector $\nu$ is given below.
\begin{center}
    \resizebox{\linewidth}{!}{%
    \begin{tabular}{c | c}
    \toprule
       \textbf{Module}  & \textbf {Essential part of }$\nu$ \\
       \midrule
       $W_{4,a}$  & $\rowvector{1 & -1 & -1 & 1 & -1 & 1 & 1 & -1 & 1 & -1 & -1 & 1 & -1 & 1 & -1 & 1 & 1 & -1 & -1 & 1 & -1 & 1 & -1 & 1}$ \\
    \bottomrule
    \end{tabular}
    }
    \captionof{table}{\textit{The seed vector $\nu$ for the irreducible $T$-module of Type VI.}}
\end{center}

With respect to this basis, the matrix representing $A$ is
\begin{equation}
    A: \begin{bmatrix}
  0
\end{bmatrix}.
\label{eq:fos-VI}
\end{equation}

This matrix has eigenvalue $0$.

We now describe the Type VII irreducible $T$-modules in our decomposition. For Type VII, the multiplicity is 3 and the modules are $W_{4,b},W_{4,c},W_{4,d}$. For each module, our basis has the form $\{\nu, A\nu\}$, where the seed vector $\nu$ is given below.
\begin{center}
    \resizebox{\linewidth}{!}{%
    \begin{tabular}{c | c}
    \toprule
       \textbf{Module}  & \textbf {Essential part of }$\nu$ \\
       \midrule
       $W_{4,b}$  & $\rowvector{1 & -1 & 1 & -1 & 1 & -1 & 1 & -1 & -1 & 1 & -1 & 1 & 1 & -1 & -1 & 1 & 0 & 0 & 0 & 0 & 0 & 0 & 0 & 0}$ \\[1.67pt]
       $W_{4,c}$  & $\rowvector{-1 & 1 & -1 & 1 & 1 & -1 & 1 & -1 & 1 & -1 & 1 & -1 & 1 & -1 & -1 & 1 & -2 & 2 & -2 & 2 & 0 & 0 & 0 & 0}$ \\[1.67pt]
       $W_{4,d}$  & $\rowvector{1 & -1 & 1 & -1 & -1 & 1 & -1 & 1 & 1 & -1 & 1 & -1 & 1 & -1 & -1 & 1 & 0 & 0 & 0 & 0 & -2 & 2 & 2 & -2}$ \\
  \bottomrule
    \end{tabular}}
    \captionof{table}{\textit{The seed vector $\nu$ for each irreducible $T$-module of Type VII. Note that $A\nu=E_5^*A\nu$.}}
\end{center}

With respect to each basis, the matrix representing $A$ is 
\begin{equation}
    A: \begin{bmatrix}
  0 & 1 \\
  1 & 0 \end{bmatrix}.\label{eq:fos-VII}
\end{equation}

This matrix has eigenvalues $1, -1$.

We now describe the Type VIII irreducible $T$-modules in our decomposition. For Type VIII, the multiplicity is 2 and the modules are $W_{4,e}$ and $W_{4,f}$. For each module, our basis has the form $$\{\nu, A\nu, E_6^*A^2\nu, E_7^*A^3\nu\},$$ where the seed vector $\nu$ is given below.
\begin{center}
    \resizebox{\linewidth}{!}{%
    \begin{tabular}{c | c}
    \toprule
       \textbf{Module}  & \textbf {Essential part of }$\nu$ \\
       \midrule
       $W_{4,e}$  & $\rowvector{1 & -1 & -1 & 1 & -1 & 1 & 1 & -1 & -1 & 1 & 1 & -1 & 1 & -1 & 1 & -1 & 0 & 0 & 0 & 0 & 0 & 0 & 0 & 0}$ \\[1.67pt]
       $W_{4,f}$  & $\rowvector{1 & -1 & -1 & 1 & -1 & 1 & 1 & -1 & 1 & -1 & -1 & 1 & -1 & 1 & -1 & 1 & -2 & 2 & 2 & -2 & 2 & -2 & 2 & -2}$ \\
    \bottomrule
    \end{tabular}}
    \captionof{table}{\textit{The seed vector $\nu$ for each irreducible $T$-module of Type VIII. Note that $A\nu=E_5^*A\nu$.}}
\end{center}

With respect to each basis, the matrix representing $A$ is 
\begin{equation}
    A: \begin{bmatrix}
  0 & 3 & 0 & 0 \\
  1 & 0 & 2 & 0 \\
  0 & 1 & 0 & 2 \\
  0 & 0 & 1 & 0 \end{bmatrix}.
\label{eq:fos-VIII}
\end{equation}

This matrix has eigenvalues $\sqrt{6}, 1, -1, -\sqrt{6}$.

\subsection{Endpoint 5}

We now describe the Type IX irreducible $T$-modules in our decomposition. For Type IX, the multiplicity is 1 and the module is  $W_5$. For this module, our basis has the form $\{\nu, A\nu, E_7^*A^2\nu, E_8^*A^3\nu\}$, where the seed vector $\nu$ is given below.
\begin{center}
    \begin{tabular}{c | c}
    \toprule
       \textbf{Module}  & \textbf {Essential part of }$\nu$ \\
       \midrule
       $W_{5}$  & $\smallrowvector{1 & -1 & 1 & -1 & 1 & -1 & -1 & 1 & -1 & 1 & -1 & 1 & -1 & 1 & 1 & -1 & -1 & 1 & 1 & -1 & 1 & -1 & 1 & -1}$ \\
    \bottomrule
    \end{tabular}
    \captionof{table}{\textit{The seed vector $\nu$ for the irreducible $T$-module of Type IX. Note that $A\nu=E_6^*A\nu$.}}
\end{center}

With respect to this basis, the matrix representing $A$ is
\begin{equation}
    A: \begin{bmatrix}
  0 & 2 & 0 & 0 \\
  1 & 0 & 2 & 0 \\
  0 & 1 & 0 & 3 \\
  0 & 0 & 1 & 0 \end{bmatrix}.
\label{eq:fos-IX}
\end{equation}

This matrix has eigenvalues $\sqrt{6}, 1, -1, -\sqrt{6}$.

\section{Conclusion}\label{sec:conclusion}

In the table below, we list the 13 distance-regular graphs of valency three. For each graph $\Gamma$ we give the diameter $D$, the number of vertices $|X|$, and the dimension of the Terwilliger algebra $T$.

\begin{center}
\begin{tabular}{lccr}
    \toprule
       Graph $\Gamma$ & Diam. $D$& $|X|$ & $\dim T$ \\
       \midrule
       $K_4$ &1& 4  & $2^2+1^2= \mathbf 5$ \\
    $K_{3,3}$ &2& 6 & $3^2 +1^2+1^2 =\mathbf {11}$ \\
    Petersen &2& 10   & $3^2+2^2+1^2 +1^2 =\mathbf{15}$ \\
    $3$-cube &3& 8 & $4^2+2^2=\mathbf{20}$ \\   
    Heawood &3& 14   & $4^2+2^2+2^2=\mathbf{24}$ \\
    Pappus &4& 18  & $5^2+3^2+1^2 +2^2+2^2 =\mathbf{43}$\\
    Coxeter &4& 28    & $5^2+5^2+2^2+4^2+1^2+2^2=\mathbf{75}$\\
    Tutte's 8-cage &4& 30   & $5^2+3^2+3^2+1^2+2^2+1^2=\mathbf{49}$ \\
    Dodecahedron &5& 20   & $6^2+6^2+2^2=\mathbf{76}$ \\
    Desargues &5& 20    & $6^2+4^2+2^2+2^2=\mathbf{60}$ \\
    Tutte's 12-cage &6& 126    & For $x\in X^+,$ \text{ }\hfill $7^2 + 4  (5^2) + 3^2 +2 (2^2) +2 (1^2)=\mathbf{168}$ \\
     &&&For $x\in X^-,$ \hfill $7^2 + 4  (5^2) +3 (2^2) +2 (1^2)=\mathbf{163}$ \\
        Biggs-Smith &7& 102   & $8^2 + 9^2 +13^2 +11^2+4^2=\mathbf{451}$ \\
    Foster &8& 90    & $9^2+2(7^2)+5^2+3(4^2)+2^2+1^2=\mathbf{257}$ \\
    \bottomrule
    \end{tabular}
    \captionof{table}{\textit{The 13 distance-regular graphs with valency three.}}
\end{center}

\section{Supplementary Material}

For this paper, we provide some supplementary material at \url{https://valkobarnabas.github.io/tmodules}. This material may be visualized on-screen or downloaded and used by the reader.

\section{Acknowledgements} 

This project was part of the Madison Experimental Mathematics Laboratory (MXM) at the University of Wisconsin--Madison for the academic year 2025/26. The authors would like to thank the MXM organizers Prof.\ Ça\u{g}lar Uyan{\i}k and Prof.\ Grace Work for their leadership and support.

Many of the calculations in this paper were performed using the R programming language. Graph figures were generated using R along with data from Wikipedia.

\noindent
Kevin Kauflin, Paul Terwilliger, Barnab\'as Valk\'o, Hanyi Wu\\
Department of Mathematics\\
University of Wisconsin--Madison\\
480 Lincoln Drive\\
Madison, WI 53706-1388 USA\\
emails: \url{kkauflin@wisc.edu}, \url{terwilli@math.wisc.edu}, \url{bvalko@wisc.edu}, \url{hwu496@wisc.edu}

\noindent
Jimmy Vineyard\\
Department of Mathematics\\
The Ohio State University\\
231 W 18th Ave\\
Columbus, OH 43210-1101 USA\\
email: \url{vineyard.23@osu.edu}

\section{Statements and Declarations}

\textbf{Funding:} The authors declare that no funds, grants, or other support were received during
the preparation of this manuscript.

\textbf{Competing interests:} The authors have no relevant financial or non-financial interests to
disclose.

\textbf{Data availability:} All data generated or analyzed during this study are included in this
published article or in the supplementary material at the website \url{https://valkobarnabas.github.io/tmodules}.

\end{document}